\numberwithin{equation}{section}
\newtheorem{thm}{Th\'eor\`eme}[section]
\newtheorem{lemma}[thm]{Lemme}
\newtheorem{cor}[thm]{Corollaire}
\newtheorem{question}[thm]{Question}
\theoremstyle{definition}
\theoremstyle{remark}
\newtheorem{rmk}[thm]{Remarque}
\newcommand{\Q}{\mathbb Q}
\newcommand{\F}{\mathbb F}
\newcommand{\Z}{\mathbb Z}
\newcommand{\G}{\mathbb G}
\renewcommand{\P}{\mathbb P}
\newcommand{\mc}[1]{\mathcal{#1}}
\newcommand{\cl}{\overline}
\newcommand{\set}[1]{\left\{#1\right\}}
\renewcommand{\phi}{\varphi}
\newcommand{\on}[1]{\operatorname{#1}}
\title{Autour de la conjecture de Tate enti\`ere pour certains produits de dimension 3 sur un corps fini}
\author{Federico Scavia}
\address{Department of Mathematics,
	University of California,
	Los Angeles, CA 90095, United States of America}
\email{scavia@math.ucla.edu}
\begin{document}



\maketitle

\begin{prelims}

\DisplayAbstractInFrench

\bigskip

\DisplayKeyWordsFr

\medskip

\DisplayMSCclassFr

\bigskip

\languagesection{English}

\bigskip

\DisplayTitleInEnglish

\medskip

\DisplayAbstractInEnglish

\end{prelims}


\newpage

\setcounter{tocdepth}{1}

\tableofcontents


\section{Introduction}

	Soient $\F$ un corps fini, $\cl{\F}$ une cl\^oture alg\'ebrique de $\F$, $G$ le groupe de Galois absolu $\on{Gal}(\cl{\F}/\F)$, $\ell$ un nombre premier inversible dans $\F$, $X$ une $\F$-vari\'et\'e projective, lisse et g\'eom\'etriquement connexe, de dimension $d$, et $\cl{X}:=X\times_{\F}\cl{\F}$. Si $M$ est un $G$-module, on note $M^{(1)}\subset M$ le sous-groupe form\'e des \'el\'ements dont le stabilisateur est un sous-groupe ouvert de $G$. Si $i\geq 0$ est un entier, la conjecture de Tate pour les cycles de codimension $i$ en cohomologie $\ell$-adique pr\'edit que les applications cycle
	\begin{align}
		CH^i(\cl{X})\otimes_{\Z}{\Q_{\ell}}&\to H^{2i}(\cl{X},\Q_{\ell}(i))^{(1)}, \label{tate2} \\
		CH^i(X)	\otimes_{\Z}{\Q_{\ell}}&\to H^{2i}(\cl{X},\Q_{\ell}(i))^G, \label{tate1} \\
		CH^i(X)\otimes_{\Z}{\Q_{\ell}}&\to H^{2i}(X,\Q_{\ell}(i)) \label{tate3}
	\end{align}
	sont surjectives. En fait ces trois versions de la conjecture de Tate sont \'equivalentes: l'\'equivalence entre la surjectivit\'e des applications (\ref{tate2})	et (\ref{tate1}) suit par un argument de restriction-corestriction, et celle entre la surjectivit\'e des applications (\ref{tate1}) et  (\ref{tate3}) utilise les conjectures de Weil. 
	
	On s'int\'eresse ici aux variantes enti\`eres de la conjecture de Tate. Celles-ci ne sont pas vraies en g\'en\'eral, mais on a des raisons
	d'esp\'erer qu'elles le soient dans le cas $i=d-1$, c'est-\`a-dire pour les $1$-cycles; voir \cite[\S 2]{ctszamuely}. Les questions d'int\'er\^et sont donc les suivantes: les applications 
	\begin{align}
		CH^{d-1}(\cl{X})\otimes_{\Z}{\Z_{\ell}}&\to H^{2d-2}(\cl{X},\Z_{\ell}(d-1))^{(1)},	\label{tate-int2}\tag{1.1'}\\
		CH^{d-1}(X)	\otimes_{\Z}{\Z_{\ell}}&\to H^{2d-2}(\cl{X},\Z_{\ell}(d-1))^G,\label{tate-int1}\tag{1.2'} \\
		CH^{d-1}(X)\otimes_{\Z}{\Z_{\ell}}&\to H^{2d-2}(X,\Z_{\ell}(d-1))	\label{tate-int3}\tag{1.3'}
	\end{align}
	sont-elles surjectives? 
	Depuis leur conception, ces questions ont inspir\'e un grand nombre de travaux par de nombreux auteurs. Le but de cet article est de montrer la surjectivit\'e de ces applications (surtout (\ref{tate-int3})) pour certaines vari\'et\'es de dimension $3$.
	
	\subsection{R\'esultats pr\'ec\'edents.} \'Etablir la surjectivit\'e de l'application (\ref{tate-int2}) s'av\`ere \^{e}tre le probl\`eme le plus abordable. Un c\'el\`ebre th\'eor\`eme de Schoen \cite[Theorem (0.5)]{schoen1998integral} montre que si la conjecture de Tate pour les surfaces est vraie (c'est-\`a-dire sous l'hypoth\`ese que l'application (\ref{tate1}) est surjective pour les cycles de dim\'ension $1$ sur toute surface sur  toute extension finie de $\F$) alors l'application (\ref{tate-int2}) est surjective pour tout $X$. Il y a aussi des r\'esultats inconditionnels dans certains cas particuliers: les hypersurfaces cubiques de dimension $4$ par Charles et Pirutka \cite[Th\'eor\`eme 1.1]{charles2015conjecture} et les vari\'et\'es ab\'eliennes de dimension $3$ par Totaro \cite[Theorem 7.1]{totaro}.
	
	Par contre, la surjectivit\'e des applications (\ref{tate-int1}) et (\ref{tate-int3}) est g\'en\'eralement plus difficile \`a montrer que celle de l'application (\ref{tate-int2}). Par exemple, on ne conna\^{i}t pas d'analogues des th\'eor\`emes de Schoen, Charles, Pirutka et Totaro pour ces variantes, et d\'ej\`a pour $X$ un produit de trois courbes elliptiques la surjectivit\'e de l'application (\ref{tate-int3}) est un probl\`eme ouvert bien connu des sp\'ecialistes. Comme l'a not\'e Schoen, la plausibilit\'e de la surjectivit\'e de l'application (\ref{tate-int1}) est une particularit\'e des corps finis. Par exemple, si $\F$ est remplac\'e par $\Q$, alors l'application (\ref{tate-int1}) n'est pas surjective pour $X=Q\times_{\Q} \P^{d-1}_{\Q}$, o\`u $Q\subset \P^2_{\Q}$ est la conique d'\'equation $x_0^2+x_1^2+x_2^2=0$.

	Parmi les trois, la surjectivit\'e de l'application (\ref{tate-int3}) est la plus myst\'erieuse. La surjectivit\'e de l'application (\ref{tate-int3}) entra\^{i}ne celle de l'application (\ref{tate-int1}), mais l'inverse n'est a priori pas vrai: le groupe $H^{2d-2}(X,\Z_{\ell}(d-1))$ contient des classes de cohomologie  ``exotiques'', qui disparaissent apr\`es passage \`a $\cl{\F}$; voir (\ref{hochschild}). L'application (\ref{tate-int3}) est surjective pour $d=1$, et aussi pour $d=2$ par exemple si $b_2(\cl{X})=\rho(\cl{X})$. (Si $V$ est une $\F$-vari\'et\'e, pour tout $j\geq 0$ et tout premier $\ell$ inversible dans $\F$ on note $b_j(\cl{V}):=\dim H^j(\cl{V},\Q_{\ell})$ le $j$-\`eme nombre de Betti de $\cl{V}$ et  $\rho(\cl{V})$ la dimension de l'espace vectoriel $\on{NS}(\cl{V})\otimes_{\Z} \Q_{\ell}$.) La surjectivit\'e de l'application (\ref{tate-int3}) en toute dimension $d\geq 3$ suit de la surjectivit\'e en dimension $d=3$; voir \cite[Proposition 5.4]{ctscavia1}. 
	
	Dans le cas $d=3$, la surjectivit\'e de l'application (\ref{tate-int3}) est li\'ee \`a l'annulation du groupe de cohomologie non-ramifi\'ee $H^3_{\on{nr}}(\F(X),\Q_{\ell}/\Z_{\ell}(2))$. Plus pr\'ecis\'ement, d'apr\`es un th\'eor\`eme de  Colliot-Th\'el\`ene et Kahn \cite[Th\'eor\`eme 2.2, Proposition 3.2]{colliot2013cycles}, si $d=3$ on a l'implication 
	\[H^3_{\on{nr}}(\F(X),\Q_{\ell}/\Z_{\ell}(2))=0\quad  \Longrightarrow\quad \text{(\ref{tate-int3}) est surjective},\]
	et l'inverse vaut si $CH_0(X)_{\Q}$ est support\'e en dimension $2$. (Si $V$ est une $\F$-vari\'et\'e projective et lisse, on dit que $CH_0(V)_{\Q}$ est support\'e en dimension $i$ s'il existe un morphisme $f:W\to V$, o\`u $W$ est une $\F$-vari\'et\'e projective et lisse de dimension $\leq i$, tel que pour tout corps alg\'ebriquement clos $\Omega$ contenant $\F$ l'homomorphisme d'image directe $f_*:CH_0(W_{\Omega})_{\Q}\to CH_0(V_{\Omega})_{\Q}$ est surjectif.) 
	
	Dans \cite[Question 5.4]{colliot2013cycles}, Colliot-Th\'el\`ene et Kahn ont demand\'e:
	\begin{question}\label{question-ctk}
		Est-ce que $H^3_{\on{nr}}(\F(X),\Q_{\ell}/\Z_{\ell}(2))=0$ pour toute $\F$-vari\'et\'e projective et lisse de dimension $3$?
	\end{question}
	Comme nous venons de le discuter, une r\'eponse affirmative \`a la question \ref{question-ctk} impliquerait la surjectivit\'e des applications (\ref{tate-int1}) et (\ref{tate-int3}) en toute dimension. Parimala et Suresh \cite{parimala2016degree} ont donn\'e une r\'eponse affirmative \`a la question \ref{question-ctk} si $X$ est un fibr\'e en coniques au dessus d'une surface. Pirutka \cite{pirutka2016cohomologie} a r\'epondu affirmativement \`a la question \ref{question-ctk} si $X=C\times_{\F}S$ o\`u $C$ est un courbe projective et lisse, $S$ est une surface projective et lisse avec $CH_0(S)_{\Q}$ support\'e en dimension $0$ et le groupe de N\'eron-Severi g\'eom\'etrique  $\on{NS}(\cl{S})$ est sans torsion.

	Dans un pr\'ec\'edent article \cite{ctscavia1}, Colliot-Th\'el\`ene et l'auteur du pr\'esent article ont attaqu\'e la question suivante, cas particuliers du probl\`eme de la surjectivit\'e de l'application (\ref{tate-int3}) et de la question \ref{question-ctk}:	
	\begin{question}\label{question-cts}
		Supposons la caract\'eristique de $\F$ diff\'erente de $2$. Soit $X=C\times_{\F}S$, o\`u $C$ est une courbe elliptique et $S$ est une surface d'Enriques. Est-ce-que \emph{(\ref{tate-int3})} est surjective pour $\ell=2$?	Est-ce-que $H^3_{\on{nr}}(\F(X),\Q_{2}/\Z_{2}(2))=0$?
	\end{question}
	L'int\'er\^et de ce cas particulier est double. D'une part, c'est le cas le plus simple qui \'echappe au th\'eor\`eme de Pirutka, parce que $CH_0(S)_{\Q}$ est support\'e en dimension $0$ mais $\on{NS}(\cl{S})_{\on{tors}}=\Z/2\Z$. D'autre part, sur le corps des complexes, Benoist et Ottem  \cite{benoist2018failure} ont utilis\'e ce genre de vari\'et\'es pour produire de nouveaux contre-exemples \`a la conjecture enti\`ere de Hodge pour les cycles de dimension $1$. Ces contre-exemples sont obtenus par une m\'ethode de sp\'ecialisation, qui n'est pas disponible sur les corps finis. Ils ne peuvent donc pas \^etre facilement adapt\'es pour donner des contre-exemples aux conjectures de Tate enti\`eres sur $\F$ (bien qu'il soit possible de les utiliser pour obtenir des contre-exemples sur des corps de nombres).
	
	Dans \cite{ctscavia1}, nous avons donn\'e dans certains cas une r\'eponse affirmative \`a la question \ref{question-cts} (plus g\'en\'eralement, pour $\ell$ quelconque diff\'erent de la caract\'eristique de $\F$ et dans la situation o\`u $C$ est une courbe de genre $\geq 1$ et $S$ est une surface avec $CH_0(S)_{\Q}$ support\'e en dimension $0$), \emph{sous l'hypoth\`ese que la conjecture de Tate pour les surfaces est vraie}. Il s'agit donc de r\'esultats conditionnels, dans l'esprit du th\'eor\`eme de Schoen.
	Plus pr\'ecis\'ement, soient $C$ et $S$ deux vari\'et\'es g\'eom\'etriquement connexes, projectives et lisses sur $\F$, de dimension $1$ et $2$ respectivement, $J_C$ la jacobienne de la courbe $C$, et $X:=C\times_{\F} S$. Dans \cite{ctscavia1}, nous avons d\'emontr\'e que l'application (\ref{tate-int3}) est surjective et que l'on a $H^3_{\on{nr}}(\F(X),\Q_{\ell}/\Z_{\ell}(2))=0$ sous l'une des hypoth\`eses suivantes:
	\begin{itemize}
		\item le groupe $CH_0(X)_{\Q}$ est support\'e en dimension $0$ et $\ell$ ne divise pas l'ordre de $\on{NS}(\cl{S})_{\on{tors}}$ (\cite[Th\'eor\`eme 1.1]{ctscavia1}). Comme mentionn\'e ci-dessus, le cas particulier $\on{NS}(\cl{S})_{\on{tors}}=0$ avait \'et\'e d\'emontr\'e par Pirutka \cite{pirutka2016cohomologie},
		\item la conjecture de Tate pour les surfaces est vraie, $CH_0(X)_{\Q}$ est support\'e en dimension $0$ et le groupe $\on{Hom}_G(\on{NS}(\cl{S})\set{\ell},J_C(\cl{\F})\set{\ell})$ est nul (\cite[Th\'eor\`eme 1.4]{ctscavia1}).
	\end{itemize}
	
	\subsection{R\'esultats principaux.} Notre premier r\'esultat montre l'\'equivalence entre la surjectivit\'e des applications (\ref{tate-int1}) et (\ref{tate-int3}) pour certains produits. 
	
	\begin{thm}\label{mainthm}
		Soient $C$ et $S$ deux vari\'et\'es g\'eom\'etriquement connexes, projectives et lisses sur $\F$, de dimension $1$ et $2$ respectivement et $X:=C\times_{\F} S$. On suppose que l'on a $\rho(\cl{S})=b_2(\cl{S})$. Alors l'application \emph{(\ref{tate-int1})} est surjective si et seulement si l'application \emph{(\ref{tate-int3})} est surjective.
	\end{thm}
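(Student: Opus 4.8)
The plan is the following. The implication (\ref{tate-int3}) surjective $\Rightarrow$ (\ref{tate-int1}) surjective is immediate: since $G$ has cohomological dimension $1$, the Hochschild--Serre spectral sequence gives a surjection $H^4(X,\Z_\ell(2))\twoheadrightarrow H^4(\cl X,\Z_\ell(2))^G$ compatible with cycle class maps. For the converse I would prove the stronger assertion that the ``exotic'' subgroup
\[N:=\ker\bigl(H^4(X,\Z_\ell(2))\twoheadrightarrow H^4(\cl X,\Z_\ell(2))^G\bigr)=H^1\bigl(G,H^3(\cl X,\Z_\ell(2))\bigr)\]
(see (\ref{hochschild})) is contained in the image of the cycle class map $\on{cl}_X\colon CH^2(X)\otimes\Z_\ell\to H^4(X,\Z_\ell(2))$. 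Granting this, (\ref{tate-int1}) surjective means $\on{im}(\on{cl}_X)$ surjects onto $H^4(\cl X,\Z_\ell(2))^G$; since $\on{im}(\on{cl}_X)$ also contains the kernel $N$ of $H^4(X,\Z_\ell(2))\to H^4(\cl X,\Z_\ell(2))^G$, it is all of $H^4(X,\Z_\ell(2))$, i.e.\ (\ref{tate-int3}) holds.

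As $C$ is a smooth projective geometrically connected curve over a finite field, Lang's theorem provides a zero-cycle $z_0$ of degree $1$ on $C$, whence a decomposition $h(C)=h^0(C)\oplus h^1(C)\oplus h^2(C)$ of Chow motives over $\F$, with $h^0(C)$ the unit motive and $h^2(C)=h^0(C)(-1)$. Tensoring with $h(S)$ splits $h(X)$ as $h(S)\oplus\bigl(h^1(C)\otimes h(S)\bigr)\oplus\bigl(h^2(C)\otimes h(S)\bigr)$, compatibly with cycle class maps. Since $H^\ast(\cl C,\Z_\ell)$ is torsion free with $H^0(\cl C,\Z_\ell)=\Z_\ell$, $H^2(\cl C,\Z_\ell(1))=\Z_\ell$ and $H^{\ge 3}(\cl C,\Z_\ell)=0$, the Künneth formula (no Tor term) and Hochschild--Serre turn this into a decomposition $N=N_0\oplus N_1\oplus N_2$, where
\[N_0=H^1\bigl(G,H^3(\cl S,\Z_\ell(2))\bigr),\qquad N_1=H^1\bigl(G,H^1(\cl C,\Z_\ell)\otimes H^2(\cl S,\Z_\ell(2))\bigr),\qquad N_2=H^1\bigl(G,H^1(\cl S,\Z_\ell(1))\bigr),\]
together with identifications of the $h(S)$-summand of $H^4(X,\Z_\ell(2))$ with $H^4(S,\Z_\ell(2))$ (the cycle class map becoming $CH_0(S)\otimes\Z_\ell\to H^4(S,\Z_\ell(2))$ via $\on{pr}_S^\ast$) and of the $h^2(C)\otimes h(S)$-summand with $H^2(S,\Z_\ell(1))$ (the cycle class map becoming $\on{Pic}(S)\otimes\Z_\ell\to H^2(S,\Z_\ell(1))$ via $\Gamma\mapsto z_0\times\Gamma$). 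It then suffices to show that $N_0$, $N_1$ and $N_2$ lie in $\on{im}(\on{cl}_X)$.

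For $N_0$ I would invoke the known surjectivity of $CH_0(S)\otimes\Z_\ell\to H^4(S,\Z_\ell(2))$ for a smooth projective surface over a finite field (equivalently, the $\ell$-adic Albanese map $A_0(S)\otimes\Z_\ell\to N_0$ is onto); this disposes of $N_0$ unconditionally, and it is here that the exotic torsion already carried by the Enriques factor is absorbed. For $N_2=H^1(G,T_\ell\on{Pic}^0_S)\cong\on{Pic}^0_S(\F)\{\ell\}$, I would note that if $\Gamma$ is a divisor on $S$ with $[\Gamma]\in\on{Pic}^0_S(\F)$ then $\on{cl}(z_0)$ has no exotic component ($z_0$ being pure of type $h^2(C)$) while $\on{cl}(\Gamma)$ is purely exotic, so the class of the $1$-cycle $z_0\times\Gamma$ lies in $N_2$ and equals the $\ell$-primary part of $[\Gamma]$; letting $[\Gamma]$ run through $\on{Pic}^0(S)$ shows $N_2\subseteq\on{im}(\on{cl}_X)$.

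The core of the argument is $N_1$, and this is where $\rho(\cl S)=b_2(\cl S)$ is used: it forces $T_\ell\on{Br}(\cl S)=0$, hence $H^2(\cl S,\Z_\ell(1))=\on{NS}(\cl S)\otimes\Z_\ell$ with $G$ acting through a finite quotient, the Tate twists then cancel, and $H^1(\cl C,\Z_\ell)\otimes H^2(\cl S,\Z_\ell(2))\cong T_\ell(J_C)\otimes_\Z\on{NS}(\cl S)$ as $G$-modules, so $N_1=H^1\bigl(G,T_\ell(J_C)\otimes_\Z\on{NS}(\cl S)\bigr)$. Choose a finite extension $\F'/\F$ with $\on{NS}(S_{\F'})=\on{NS}(\cl S)$; over $\F'$ the action on $\on{NS}(\cl S)$ is trivial, so $N_1':=H^1\bigl(G',T_\ell(J_C)\otimes_\Z\on{NS}(\cl S)\bigr)=H^1(G',T_\ell J_C)\otimes_\Z\on{NS}(\cl S)$, and Lang's theorem applied to $0\to T_\ell J_C\to V_\ell J_C\to J_C[\ell^\infty]\to 0$ (Frobenius has no eigenvalue $1$ on $V_\ell J_C$, and $H^1(G',J_C[\ell^\infty])=0$) gives $H^1(G',T_\ell J_C)\cong J_C(\F')\{\ell\}$. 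A generator $\alpha\otimes\delta$ of $N_1'$ is then realized by the class of the $1$-cycle $z\times D$ on $X_{\F'}$, where $z$ is a degree-$0$ zero-cycle on $C_{\F'}$ with $\ell$-primary Abel--Jacobi class $\alpha$ and $D$ a divisor on $S_{\F'}$ with $[D]=\delta$: a direct computation with exterior products and Hochschild--Serre shows $\on{cl}(z\times D)$ lies in $N_1'$ and equals $\alpha\otimes\delta$. Hence $N_1'\subseteq\on{im}(\on{cl}_{X_{\F'}})$. Finally, for a finitely generated $\Z_\ell$-module $M$ with continuous $G$-action the corestriction $H^1(G',M)\to H^1(G,M)$ is the natural surjection $M/(\sigma^{[\F':\F]}-1)M\twoheadrightarrow M/(\sigma-1)M$, and it is compatible with proper pushforward of cycles along $X_{\F'}\to X$; applying it to $N_1'$ gives $N_1\subseteq\on{im}(\on{cl}_X)$, which completes the proof. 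I expect the main obstacle to be precisely this last step: one must control the integral structure exactly --- verify that $\on{cl}(z\times D)$ is $\alpha\otimes\delta$ on the nose (not merely modulo a subgroup), handle the torsion subgroup $\on{NS}(\cl S)_{\on{tors}}$ (the source of the Enriques-specific phenomena), and make sure corestriction reaches all of $N_1$ --- besides quoting, for $N_0$, the class field theory of zero-cycles on surfaces over finite fields.
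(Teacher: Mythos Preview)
Your proposal is correct and follows essentially the same route as the paper: reduce to showing $N\subset\on{im}(\on{cl}_X)$, split $N$ via K\"unneth into the three pieces $N_0,N_1,N_2$, handle $N_0$ by unramified class field theory for $S$, $N_2$ by Kummer theory and cup with a degree-$1$ zero-cycle on $C$, and $N_1$ (under $\rho=b_2$) by passing to a finite extension over which $G$ acts trivially on $\on{NS}(\cl S)$, realizing generators as $z\times D$, and corestricting back. The only substantive point you flag as an ``expected obstacle'' --- that $\on{cl}(z\times D)$ really equals $\alpha\otimes\delta$ in $N_1'$, integrally and including $\on{NS}(\cl S)_{\on{tors}}$ --- is exactly the technical core of the paper (its Section~\ref{sec3}), which it settles by a systematic \v{C}ech-cohomology description of the Hochschild--Serre edge map $\theta_X$ and its compatibility with cup-products, trace, and Kummer theory on $C$; your motivic splitting via $h(C)=h^0\oplus h^1\oplus h^2$ is a convenient repackaging of the same compatibilities.
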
  
	
	En cons\'equence du th\'eor\`eme \ref{mainthm}, on g\'en\'eralise et rend inconditionnels plusieurs r\'esultats de \cite{ctscavia1}. 
	
	\begin{thm}\label{mainthm'}
		Soient $C$ et $S$ deux vari\'et\'es g\'eom\'etriquement connexes, projectives et lisses sur $\F$, de dimension $1$ et $2$ respectivement et $X:=C\times_{\F} S$. Supposons que l'on ait $\rho(\cl{S})=b_2(\cl{S})$ et notons $J_C$ la jacobienne de $C$ et $(\on{Pic}^0_{S/\F})_{\on{red}}$ la composante connexe du sch\'ema de Picard de $S$, avec sa structure r\'eduite.
\begin{enumerate}[label=(\alph*)]
\item Supposons
\begin{equation}\label{cond2}
{\rm Hom}_{G}(\on{NS}(\cl{S})\{\ell\}, J_C(\cl{\F})\set{\ell})=0\quad\text{et}\quad \on{Hom}_{\F-{\rm gr}}((\on{Pic}^0_{S/\F})_{\on{red}},J_C)=0.
\end{equation}	
Alors l'application \emph{(\ref{tate-int3})} est surjective. 
\item Si la condition \emph{(\ref{cond2})} est satisfaite et $CH_0(S)_{\Q}$ est support\'e en dimension $1$, alors \[H^3_{\on{nr}}(\F(X),\Q_{\ell}/\Z_{\ell}(2))=0.\]
\end{enumerate}
\end{thm}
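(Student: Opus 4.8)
The strategy is to deduce Theorem \ref{mainthm'} from Theorem \ref{mainthm} together with the results of \cite{ctscavia1}. The point of the hypothesis $\rho(\cl S)=b_2(\cl S)$ is precisely that Theorem \ref{mainthm} makes the surjectivity of \eqref{tate-int3} equivalent to the (a priori weaker, and more tractable) surjectivity of \eqref{tate-int1}; so for part (a) it suffices to prove that \eqref{tate-int1} is surjective for $X=C\times_\F S$ under the hypothesis \eqref{cond2}. Here I would invoke the conditional result \cite[Th\'eor\`eme 1.4]{ctscavia1}, whose proof of the surjectivity of \eqref{tate-int1} uses the Tate conjecture for surfaces \emph{only through the validity of \eqref{tate1} for the surface $S$ itself} (and its finite extensions). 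But when $\rho(\cl S)=b_2(\cl S)$, the cycle map $CH^1(\cl S)\otimes\Q_\ell\to H^2(\cl S,\Q_\ell(1))^{(1)}$ is surjective for trivial reasons — the whole of $H^2(\cl S,\Q_\ell(1))$ is spanned by divisor classes — and the same holds over every finite extension of $\F$, since $b_2$ and $\rho$ are insensitive to such base change. Hence the Tate conjecture for $S$ is unconditionally true in this situation, and the conditional theorem becomes unconditional. The two displayed conditions in \eqref{cond2} are exactly the hypotheses needed in \cite[Th\'eor\`eme 1.4]{ctscavia1}: the vanishing of $\on{Hom}_G(\on{NS}(\cl S)\{\ell\},J_C(\cl\F)\{\ell\})$ controls the torsion part, while $\on{Hom}_{\F\text{-}\mathrm{gr}}((\on{Pic}^0_{S/\F})_{\mathrm{red}},J_C)=0$ is what one needs to kill the contribution of the abelian-variety part $\on{Pic}^0$ of $S$ (in \cite{ctscavia1} this was subsumed under "$CH_0(S)_\Q$ support\'e en dimension $0$", which forces $\on{Pic}^0_S=0$; the present formulation relaxes that hypothesis).

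For part (b), recall from the theorem of Colliot-Th\'el\`ene and Kahn \cite[Th\'eor\`eme 2.2, Proposition 3.2]{colliot2013cycles} quoted in the introduction: for a smooth projective threefold $X$ one has the implication that surjectivity of \eqref{tate-int3} together with $CH_0(X)_\Q$ supported in dimension $2$ implies $H^3_{\on{nr}}(\F(X),\Q_\ell/\Z_\ell(2))=0$. So I would first note that the hypothesis "$CH_0(S)_\Q$ support\'e en dimension $1$" implies "$CH_0(X)_\Q$ support\'e en dimension $2$" for $X=C\times_\F S$: if $f\colon W\to S$ with $\dim W\le 1$ realizes the support condition for $S$, then $f\times\mathrm{id}_C\colon W\times_\F C\to S\times_\F C=X$ has source of dimension $\le 2$, and the surjectivity of direct images on $CH_0$ with $\Q$-coefficients is preserved after the product with a fixed smooth projective curve (this is the standard argument using the projective bundle / blow-up formulas together with Künneth-type decompositions, or more simply the fact that $CH_0((W\times C)_\Omega)_\Q\to CH_0(X_\Omega)_\Q$ is surjective because every closed point of $X_\Omega$ lies on a fibre $W\times\{c\}$ mapping onto $S\times\{c\}$). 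Then part (a) gives the surjectivity of \eqref{tate-int3}, and the Colliot-Th\'el\`ene–Kahn implication finishes the proof.

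The main obstacle is the verification that \cite[Th\'eor\`eme 1.4]{ctscavia1} really does apply with the weaker hypotheses stated here: one must check that replacing "$CH_0(S)_\Q$ supported in dimension $0$" by the second condition in \eqref{cond2}, namely $\on{Hom}_{\F\text{-}\mathrm{gr}}((\on{Pic}^0_{S/\F})_{\mathrm{red}},J_C)=0$, still allows the argument there to go through — concretely, that the part of $H^4(X,\Z_\ell(2))^G$ coming from the $H^1(\cl C)\otimes H^3(\cl S)$ Künneth piece (equivalently, from $\on{Hom}$ between the two abelian varieties) is still hit by algebraic cycles. I expect this to follow from the theory of homomorphisms of abelian varieties over finite fields (Tate's theorem on $\on{Hom}(A,B)\otimes\Z_\ell\xrightarrow{\sim}\on{Hom}_G(T_\ell A,T_\ell B)$), which makes the relevant Künneth component algebraic as soon as the $\on{Hom}$-group vanishes after $\otimes\Z_\ell$; but the precise bookkeeping of how this interacts with the exotic/Hochschild classes appearing in \eqref{tate-int3} versus \eqref{tate-int1} — which is exactly where Theorem \ref{mainthm} is used to close the gap — is the delicate point to get right.
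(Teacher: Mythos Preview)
Your overall architecture is right: for (a), reduce to the surjectivity of \eqref{tate-int1} via Theorem~\ref{mainthm}, then prove \eqref{tate-int1} directly; for (b), pass from the support condition on $S$ to one on $X$ and apply the Colliot-Th\'el\`ene--Kahn implication together with (a). Part~(b) is essentially as in the paper.

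The gap is in your route to \eqref{tate-int1}. Invoking \cite[Th\'eor\`eme~1.4]{ctscavia1} does not work. First, your claim that the Tate conjecture is used there ``only through the validity of \eqref{tate1} for $S$ itself'' is unsubstantiated; the paper stresses that the methods of \cite{ctscavia1} (via $K$-theory and $0$-cycles on $S_{\F(C)}$) are quite different, and there is no reason the appeal to Tate there localises to $S$. Second, that theorem assumes $CH_0(S)_\Q$ is supported in dimension $0$, which forces $(\on{Pic}^0_{S/\F})_{\on{red}}=0$; you cannot substitute the weaker condition $\on{Hom}_{\F\text{-}\mathrm{gr}}((\on{Pic}^0_{S/\F})_{\on{red}},J_C)=0$ without redoing the proof. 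You acknowledge this as ``the main obstacle'', but it means the strategy is not yet a proof.

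The paper instead argues directly on the K\"unneth decomposition
\[
H^4(\cl X,\Z_\ell(2))^G \;\simeq\; H^4(\cl S,\Z_\ell(2))^G \,\oplus\, \bigl(H^3(\cl S,\Z_\ell(2))\otimes_{\Z_\ell} H^1(\cl C,\Z_\ell)\bigr)^G \,\oplus\, H^2(\cl S,\Z_\ell(1))^G.
\]
The outer summands are hit by cycles: the first via a degree-$1$ zero-cycle on $S$ (Lang--Weil), the last because $b_2(\cl S)=\rho(\cl S)$ makes $\on{Pic}(S)\otimes\Z_\ell\to H^2(\cl S,\Z_\ell(1))^G$ surjective. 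The key point --- and this corrects your ``hit by algebraic cycles'' --- is that the middle summand \emph{vanishes} under \eqref{cond2}. Its torsion part identifies with $\on{Hom}_G(\on{NS}(\cl S)\{\ell\},J_C(\cl\F)\{\ell\})$ via $H^3(\cl S,\Z_\ell(2))_{\on{tors}}\simeq\on{Hom}(\on{NS}(\cl S)\{\ell\},\Q_\ell/\Z_\ell(1))$ and $H^1(\cl C,\Z_\ell(1))\simeq T_\ell(J_C)$, hence is killed by the first condition. Its torsion-free part, after $\otimes\,\Q_\ell$, identifies via Poincar\'e duality $H^3(\cl S,\Q_\ell(1))\simeq H^1(\cl S,\Q_\ell(1))^\vee$ and Tate's isogeny theorem with $\on{Hom}_{\F\text{-}\mathrm{gr}}((\on{Pic}^0_{S/\F})_{\on{red}},J_C)\otimes\Q_\ell$, hence is killed by the second condition. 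This is Lemmas~\ref{hyp-pic} and~\ref{tate-ginv}. So your final paragraph has the right ingredients, but the conclusion to draw is vanishing of the $G$-invariants of that K\"unneth piece, and the argument is self-contained rather than a modification of \cite{ctscavia1}.
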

	
	Si $CH_0(S)_{\Q}$ est support\'e en dimension $0$, alors $b_{1}(\cl{S})=0$, $\rho(\cl{S})=b_{2}(\cl{S})$ et la composante connexe du sch\'ema de Picard de $S$ est triviale. On obtient alors \cite[Th\'eor\`eme 1.4]{ctscavia1} sans supposer la conjecture de Tate pour les surfaces. En cons\'equence, on obtient une r\'eponse affirmative inconditionnelle \`a la question \ref{question-cts} dans certains cas. Dans \cite{ctscavia1}, on avait obtenu l'assertion suivante sous la conjecture de Tate pour les surfaces.
	
	\begin{cor}\label{cor-enriquese}
		Si la caract\'eristique de $\F$ est diff\'erente de $2$ et $X=C\times_{\F}S$, o\`u $S$ est une surface d'Enriques et $C$ est une courbe elliptique sans points d'ordre $2$ d\'efinis sur $\F$, la question \emph{\ref{question-cts}} a r\'eponse affirmative.
	\end{cor}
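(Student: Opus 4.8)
\emph{Plan de démonstration.} Répondre affirmativement à la question \ref{question-cts} revient à montrer que, pour $X=C\times_\F S$, l'application (\ref{tate-int3}) est surjective pour $\ell=2$ et que $H^3_{\on{nr}}(\F(X),\Q_{2}/\Z_{2}(2))=0$. La caractéristique de $\F$ étant différente de $2$, l'entier $2$ est inversible dans $\F$, et l'idée est d'appliquer le théorème \ref{mainthm'} avec $\ell=2$. Il faudra vérifier trois choses: (i) $\rho(\cl S)=b_2(\cl S)$; (ii) la condition (\ref{cond2}) pour $\ell=2$; (iii) pour la seconde assertion, que $CH_0(S)_\Q$ est supporté en dimension $1$.

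On commencera par rappeler les faits classiques suivants sur une surface d'Enriques $S$ en caractéristique différente de $2$ (voir par exemple les références citées dans \cite{ctscavia1}). On a $h^1(S,\O_S)=h^2(S,\O_S)=0$, d'où $b_1(\cl S)=0$ et $(\on{Pic}^0_{S/\F})_{\on{red}}=0$; on a $b_2(\cl S)=10$ et $\on{Pic}(\cl S)$ est de rang $10$ (son quotient libre étant isomorphe à $U\oplus E_8(-1)$), d'où $\rho(\cl S)=10=b_2(\cl S)$, ce qui établit (i) et rend ici la conjecture de Tate pour $\cl S$ triviale; enfin $\on{NS}(\cl S)_{\on{tors}}=\Z/2\Z$, engendré par la classe du faisceau canonique $\omega_S$, laquelle est définie sur $\F$, de sorte que le groupe de Galois $G$ agit trivialement sur $\on{NS}(\cl S)\{2\}\cong\Z/2\Z$. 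De plus $CH_0(S)_\Q$ est supporté en dimension $0$ (c'est la conjecture de Bloch pour $S$, connue puisque $S$ n'est pas de type général), donc a fortiori en dimension $1$, ce qui donne (iii).

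Restera à établir (ii). Puisque $(\on{Pic}^0_{S/\F})_{\on{red}}=0$, on a $\on{Hom}_{\F-{\rm gr}}((\on{Pic}^0_{S/\F})_{\on{red}},J_C)=0$. Pour l'autre terme, comme $C$ est une courbe elliptique on a $J_C=C$, et
\[\on{Hom}_G(\on{NS}(\cl S)\{2\},C(\cl\F)\{2\})=\on{Hom}_G(\Z/2\Z,C(\cl\F)\{2\})=C(\cl\F)[2]^G=C(\F)[2]=0,\]
la dernière égalité traduisant exactement l'hypothèse que $C$ n'a pas de point d'ordre $2$ défini sur $\F$. La condition (\ref{cond2}) est donc satisfaite pour $\ell=2$. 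Le théorème \ref{mainthm'}(a) donnera alors la surjectivité de (\ref{tate-int3}) pour $\ell=2$, puis le théorème \ref{mainthm'}(b) donnera $H^3_{\on{nr}}(\F(X),\Q_{2}/\Z_{2}(2))=0$, d'où le corollaire.

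L'énoncé ne présente aucune difficulté propre au-delà du théorème \ref{mainthm'}: tout revient à vérifier ses hypothèses pour ce choix particulier de $C$ et de $S$. Le seul point demandant un peu de soin est que les propriétés structurelles des surfaces d'Enriques utilisées ci-dessus (trivialité de $(\on{Pic}^0)_{\on{red}}$, rang et torsion de $\on{NS}$, conjecture de Bloch) doivent valoir en caractéristique positive — c'est précisément ce qui motive l'hypothèse sur la caractéristique, les surfaces d'Enriques non classiques en caractéristique $2$ se comportant différemment.
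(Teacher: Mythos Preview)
Ta démonstration est correcte et suit exactement l'approche du papier: le corollaire y est énoncé sans preuve propre, comme conséquence directe du théorème \ref{mainthm'} une fois vérifiées, pour une surface d'Enriques en caractéristique $\neq 2$, les hypothèses $\rho(\cl S)=b_2(\cl S)$, $(\on{Pic}^0_{S/\F})_{\on{red}}=0$, $\on{NS}(\cl S)\{2\}\simeq\Z/2\Z$ à action galoisienne triviale, et $CH_0(S)_\Q$ supporté en dimension $0$. Ton calcul $\on{Hom}_G(\Z/2\Z,C(\cl\F)\{2\})=C(\F)[2]=0$ est exactement le point qui traduit l'hypothèse sur $C$.
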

	
	Si la courbe elliptique $C$ a un mod\`ele affine d'\'equation $y^2=f(x)$, o\`u $f(x)$ est un polyn\^{o}me de degr\'e $3$, la condition du corollaire \ref{cor-enriquese} est remplie si est seulement si $f(x)$ est irr\'eductible.  Ceci est un ph\'enom\`ene typique: pour une classe donn\'ee de courbes $C$ et surfaces $S$, les conditions (\ref{cond2}) sont g\'en\'eralement faciles \`a d\'ecrire explicitement et sont satisfaites par une partie substantielle des membres de la classe.
	
	En combinant le th\'eor\`eme \ref{mainthm'} et la suite exacte de \cite[Th\'eor\`eme 6.3]{colliot2013cycles}
\begin{center}\label{ctkahn}
\begin{tikzcd}[row sep=normal, column sep=small]
0\arrow[r]
&  \on{Ker}\left(CH^2(X)\{\ell\}\to CH^2(\cl{X})\{\ell\}\right) \ar[r] \arrow[d, phantom, ""{coordinate, name=Z}] & H^1\left(\F,  H^3(\cl{X},\Z_{\ell}(2))_{\on{tors}}\right)\arrow[dl,
rounded corners,
to path={ -- ([xshift=2ex]\tikztostart.east)
|- (Z) [near start]\tikztonodes
-| ([xshift=-2ex]\tikztotarget.west)
-- (\tikztotarget)}] &\\
&  \on{Ker}\left(H^3_{\on{nr}}(\F(X),\Q_{\ell}/\Z_{\ell}(2))H^3_{\on{nr}}(\cl{\F}(X),\Q_{\ell}/\Z_{\ell}(2)) \right)\ar[r]  &\on{Coker}\left(CH^2(X)\to CH^2(\cl{X})^G\right) \{\ell\}\ar[r]& 0,
\end{tikzcd}
\end{center}
on obtient la cons\'equence suivante pour la descente galoisienne pour $CH^2(X)$.
	\begin{cor}
		Soient $C$ et $S$ deux vari\'et\'es g\'eom\'etriquement connexes, projectives et lisses sur $\F$, de dimension $1$ et $2$ respectivement et $X:=C\times_{\F} S$. Supposons $\rho(\cl{S})=b_2(\cl{S})$, que la condition \emph{(\ref{cond2})} est satisfaite et que $CH_0(S)_{\Q}$ est support\'e en dimension $\leq 1$. Alors
		\[\on{Ker}\left(CH^2(X)\{\ell\} \to CH^2(\cl{X})\{\ell\}\right)\simeq H^1\left(\F,  H^3(\cl{X},\Z_{\ell}(2))_{\on{tors}}\right)\] et la fl\`eche naturelle $CH^2(X)\to CH^2(\cl{X})^G$ est surjective.
	\end{cor}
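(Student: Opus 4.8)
Le plan est de déduire l'isomorphisme du théorème \ref{mainthm'}(b) au moyen de la suite exacte de Colliot-Thélène et Kahn rappelée ci-dessus, puis d'établir la surjectivité à partir de la structure de $CH^2(C\times S)$.

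D'après le théorème \ref{mainthm'}(b), sous les hypothèses de l'énoncé on a $H^3_{\on{nr}}(\F(X),\Q_\ell/\Z_\ell(2))=0$; en particulier le terme $\on{Ker}\bigl(H^3_{\on{nr}}(\F(X),\Q_\ell/\Z_\ell(2))\to H^3_{\on{nr}}(\cl{\F}(X),\Q_\ell/\Z_\ell(2))\bigr)$ de la suite exacte est nul. En y substituant cette annulation, on obtient d'une part l'isomorphisme cherché
\[\on{Ker}\bigl(CH^2(X)\{\ell\}\to CH^2(\cl{X})\{\ell\}\bigr)\xrightarrow{\ \sim\ }H^1\bigl(\F,H^3(\cl{X},\Z_\ell(2))_{\on{tors}}\bigr),\]
et d'autre part l'annulation de $\on{Coker}\bigl(CH^2(X)\to CH^2(\cl{X})^G\bigr)\{\ell\}$.

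Il reste à voir que $CH^2(X)\to CH^2(\cl{X})^G$ est surjective entièrement, et pas seulement sur les parties $\ell$-primaires. Son conoyau est de torsion (restriction--corestriction). Pour montrer qu'il est nul, on utilise la décomposition de Chow--Künneth $\mathfrak h(C)=\mathbf 1\oplus\mathfrak h^1(C)\oplus\mathbf L$, associée à un zéro-cycle de degré $1$ sur $C$ --- il en existe un sur $\F$ puisque $\on{Pic}^1_{C/\F}$ est un torseur sous $J_C$ et que $H^1(\F,J_C)=0$ (théorème de Lang) ---, d'où
\[CH^2(C\times S)=\on{Pic}(S)\oplus CH_0(S)\oplus CH^2\bigl(\mathfrak h^1(C)\otimes\mathfrak h(S)\bigr),\]
compatiblement sur $\F$ et sur $\cl{\F}$ (avec l'action de $G$). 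L'hypothèse $\rho(\cl{S})=b_2(\cl{S})$ assure que $\mathfrak h^2(\cl{S})$ est de type algébrique, de sorte que le dernier facteur, sur $\cl{\F}$, s'identifie à un groupe de la forme $\on{Hom}_\Z\bigl(\on{NS}(\cl{S}),J_C(\cl{\F})\bigr)\oplus\on{Hom}_{\cl{\F}-{\rm gr}}\bigl(J_C,(\on{Pic}^0_{\cl{S}})_{\on{red}}\bigr)$, la version sur $\F$ étant $\on{Hom}_G\bigl(\on{NS}(\cl{S}),J_C(\cl{\F})\bigr)\oplus\on{Hom}_{\F-{\rm gr}}\bigl(J_C,(\on{Pic}^0_{S/\F})_{\on{red}}\bigr)$. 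On conclut alors facteur par facteur. Le conoyau de $\on{Pic}(S)\to\on{Pic}(\cl{S})^G$ est nul, car la suite exacte de Hochschild--Serre pour $\G_m$ l'injecte dans $\on{Br}(\F)=0$. Le conoyau du facteur des correspondances est nul, car $\on{Hom}_\Z\bigl(\on{NS}(\cl{S}),J_C(\cl{\F})\bigr)^G=\on{Hom}_G\bigl(\on{NS}(\cl{S}),J_C(\cl{\F})\bigr)$ et, par descente galoisienne pour les homomorphismes de variétés abéliennes, $\on{Hom}_{\cl{\F}-{\rm gr}}\bigl(J_C,(\on{Pic}^0_{\cl{S}})_{\on{red}}\bigr)^G=\on{Hom}_{\F-{\rm gr}}\bigl(J_C,(\on{Pic}^0_{S/\F})_{\on{red}}\bigr)$. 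Enfin, le conoyau de $CH_0(S)\to CH_0(\cl{S})^G$ est nul: le groupe $CH_0(\cl{S})_{\deg 0}$ étant de torsion, le théorème de Rojtman (sous la forme de Milne incluant la torsion $p$-primaire) montre que l'application d'Albanese induit un isomorphisme $G$-équivariant $CH_0(\cl{S})_{\deg 0}\simeq\on{Alb}_S(\cl{\F})$, donc ce conoyau vaut $\on{Coker}\bigl(CH_0(S)_{\deg 0}\to\on{Alb}_S(\F)\bigr)$, qui est nul parce que l'application d'Albanese sur les zéro-cycles d'une surface projective et lisse sur un corps fini est surjective (en coupant $S$ par une courbe lisse suffisamment ample $C_0$, l'homomorphisme $J_{C_0}\to\on{Alb}_S$ est surjectif à noyau connexe d'après le théorème de Lefschetz pour $\pi_1$, et l'on conclut à nouveau par le théorème de Lang). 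En recollant ces annulations, $CH^2(X)\to CH^2(\cl{X})^G$ est surjective.

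Le point le plus délicat sera le contrôle entier dans la décomposition de Chow--Künneth: passer de l'énoncé rationnel, où $\mathfrak h^2(\cl{S})_\Q$ est une somme de copies tordues de $\mathbf L$, à l'énoncé entier, en maîtrisant la torsion de $\on{NS}(\cl{S})$ et de $H^2(\cl{S},\Z_\ell)$ et en identifiant précisément le facteur des correspondances; c'est aussi là que la surjectivité de l'application d'Albanese sur sa partie $p$-primaire demandera un peu de soin.
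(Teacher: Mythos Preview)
Your first two paragraphs reproduce the paper's argument exactly: the corollary is stated as a direct consequence of Theorem~\ref{mainthm'}(b) plugged into the Colliot-Th\'el\`ene--Kahn exact sequence, which yields the claimed isomorphism and the vanishing of $\on{Coker}\bigl(CH^2(X)\to CH^2(\cl{X})^G\bigr)\{\ell\}$. The paper offers no further argument.

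You correctly observe that the exact sequence controls only the $\ell$-primary part of the cokernel, whereas the corollary asserts full surjectivity. The paper does not address this discrepancy; the statement is presumably to be read as concerning the $\ell$-part (consistent with the rest of the paper, where $\ell$ is fixed throughout), or is slightly loosely phrased.

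Your attempt to close this gap via a Chow--K\"unneth decomposition is a genuinely different route, but it has a real hole. The identification you assert for the ``correspondence factor'' requires an \emph{integral} splitting of $\mathfrak h^2(S)$ as a sum of twisted Lefschetz motives. The hypotheses $\rho(\cl{S})=b_2(\cl{S})$ and $CH_0(S)_{\Q}$ supported in dimension $\leq 1$ give this only with rational coefficients (Murre's decomposition and vanishing of the transcendental piece); integrally, the torsion in $\on{NS}(\cl{S})$ obstructs any such decomposition, and there is no general mechanism to produce the isomorphism $CH^2(\mathfrak h^1(C)\otimes\mathfrak h(S))\simeq\on{Hom}_{\Z}(\on{NS}(\cl{S}),J_C(\cl{\F}))\oplus\on{Hom}(J_C,(\on{Pic}^0_{S})_{\on{red}})$ you invoke. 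You flag this yourself in the final paragraph as ``le point le plus d\'elicat'' without resolving it, so as written the argument for full surjectivity is incomplete.
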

	Dans la situation du corollaire \ref{cor-enriquese}, $H^3(\cl{X},\Z_{2}(2))_{\on{tors}}\simeq \Z/2\Z$ et l'application $CH^2(X)\{2\} \to CH^2(\cl{X})\{2\}$ n'est donc jamais injective.

	\subsection{Id\'ees de la d\'emonstration.} Les m\'ethodes utilis\'ees dans cet article diff\`erent sensiblement de celles de \cite{ctscavia1}. Dans \cite{ctscavia1}, sous l'hypoth\`ese que $CH_0(S)_{\Q}$ est support\'e en dimension $0$, le point principal \'etait l'\'etude, par des outils venant de la $K$-th\'eorie alg\'ebrique, du groupe de Chow des  $0$-cycles de degr\'e $0$ sur la surface $S\times_{\F}{\F(C)}$, groupe qui est li\'e \`a $CH^2(X)$ au moyen de la suite de localisation. 
	
	Dans cet article, on utilise une m\'ethode globale. Le r\'esultat cl\'e pour la preuve du th\'eor\`eme \ref{mainthm} est le th\'eor\`eme \ref{iota-dans-image} qui montre que, sous l'hypoth\`ese $b_2(\cl{S})=\rho(\cl{S})$, le noyau de $H^{4}(X,\Z_{\ell}(2)) \to H^{4}(\cl{X},\Z_{\ell}(2))$ est contenu dans l'image de l'application (\ref{tate-int3}). La d\'emonstration du th\'eor\`eme \ref{iota-dans-image} se fait en plusieurs \'etapes. 
	\begin{itemize}
		\item La suite spectrale d'Hochschild-Serre induit un isomorphisme
		\[\theta_X:\on{Ker}(H^{4}(X,\Z_{\ell}(2)) \to H^{4}(\cl{X},\Z_{\ell}(2)))\xrightarrow{\sim} H^1(\F,H^3(\cl{X},\Z_{\ell}(2)));\] voir (\ref{just-for-intro}). Par la formule de K\"unneth $\ell$-adique, le groupe de droite admet une d\'ecomposition en trois facteurs directs
		\[H^1\left(\F, H^3(\cl{S},\Z_{\ell}(2))\right),\qquad  H^1\left(\F,H^1(\cl{S},\Z_{\ell}(1))\right)\]
		et
		\[H^1\left(\F,H^2(\cl{S},\Z_{\ell}(1))\otimes_{\Z_{\ell}} H^1(\cl{C},\Z_{\ell}(1))\right).\]
		En utilisant l'hypoth\`ese $b_2(\cl{S})=\rho(\cl{S})$ et un c\'el\`ebre th\'eor\`eme de Lang \cite{lang1956unramified} (voir aussi \cite[Th\'eor\`eme 5]{colliot1983torsion}), on montre dans les lemmes \ref{lemmeA} et \ref{lemmeB} que les deux premiers termes sont contenus dans l'image de l'application (\ref{tate-int3}).
		\item Pour conclure, il suffit de montrer que le facteur ``mixte'' est contenu dans l'image de l'application (\ref{tate-int3}). On construit un homomorphisme
		\[\on{Pic}(S)\otimes_{\Z}J_C(\F)\otimes_{\Z}\Z_{\ell}\to H^1\left(\F,H^2(\cl{S},\Z_{\ell}(1))\otimes_{\Z_{\ell}} H^1(\cl{C},\Z_{\ell}(1))\right)\]
		en utilisant la th\'eorie de Kummer et le cup-produit en cohomologie galoisienne; voir (\ref{gros}) et (\ref{gros'}). Il n'est pas a priori clair que cet homomorphisme est compatible avec
		\[\on{Pic}(S)\otimes_{\Z} J_C(\F)\otimes_{\Z}\Z_{\ell}\xrightarrow{\cup}CH^2(X)\otimes_{\Z}\Z_{\ell}\xrightarrow{\text{(\ref{tate-int3})}}H^4(X,\Z_{\ell}(2)).\]
		La d\'emonstration de cette compatibilit\'e est le c\oe{}ur technique de ce travail. Pour cela, une \'etude syst\'ematique de la fl\`eche $\theta_X$ est n\'ecessaire. Ceci occupe la plus grande partie des sections \ref{sec3} et \ref{sec4}. Notre analyse s'applique \`a $X$ arbitraire et donc pourrait  potentiellement \^etre utilis\'ee pour des vari\'et\'es plus g\'en\'erales que des produits.
		\item En utilisant cette description et la condition $b_2(\cl{S})=\rho(\cl{S})$, on arrive \`a d\'emontrer que le terme manquant de la d\'ecomposition ci-dessus est dans l'image de l'application (\ref{tate-int3}) si $G$ agit trivialement sur $\on{NS}(\cl{S})$. Par un argument de normes utilisant l'application trace $\ell$-adique, on d\'eduit de cela le th\'eor\`eme \ref{iota-dans-image}. Ce dernier argument, sugg\'er\'e par Jean-Louis Colliot-Th\'el\`ene, n'est pas difficile \`a comprendre mais un peu surprenant: les arguments de restriction-corestriction n'aident g\'en\'eralement pas \`a prouver des cas de la conjecture de Tate enti\`ere.
	\end{itemize}

	\subsection*{Notations}
	Si $A$  est un groupe ab\'elien, $n\geq 1$ est un entier, et $\ell$  est un nombre premier,  on note $A[n]:=\set{a\in A: na=0}$, $A\set{\ell}$ le sous-groupe de torsion $\ell$-primaire de $A$, $A_{\on{tors}}$ le sous-groupe de torsion de $A$, $A/\on{tors}:=A/A_{\on{tors}}$, $T_{\ell}(A)$ le module de Tate de $A$ et $V_{\ell}(A):=T_{\ell}(A)\otimes_{\Z_{\ell}}\Q_{\ell}$. 
	
	Si $k$ est un corps, on note $\cl{k}$ une cl\^{o}ture s\'eparable de $k$. Si $k'/k$ est une extension galoisienne de corps, $G=\on{Gal}(k'/k)$ est le groupe de Galois, et $M$ est un $G$-module continu, on note   $H^i(G,M)$ le $i$-\`eme groupe de cohomologie galoisienne de $G$ \`a valeurs dans $M$, et on \'ecrit $M^G=H^0(G,M)$ pour le sous-module des \'el\'ements fix\'es par $G$. Si $k'=\cl{k}$, on \'ecrit $H^i(k,M)$ pour $H^i(G,M)$. 
	
	Si $k'/k$ est une extension de corps et $X$ est un $k$-sch\'ema, on note $X_{k'}:=X\times_kk'$. Si $k'=\cl{k}$, on \'ecrit $\cl{X}$ pour $X\times_k\cl{k}$. Si $F$ est un faisceau ab\'elien \'etale sur $X$, on notera  $\cl{F}$ l'image r\'eciproque de $F$ le long du morphisme de projection $\cl{X}\to X$. Si $Y\to X$ est un morphisme de sch\'emas, pour tout $n\geq 0$ on \'ecrit \[Y_X^n:=Y\times_XY\times_X\dots\times_XY,\qquad \text{($n$ fois)}.\] 
	Si $k$ est un corps, $\ell$   un nombre premier inversible dans $k$, et $n\geq 1$ un entier, on note  $\mu_{{\ell}^n}$ le faisceau ab\'elien \'etale des racines $\ell^n$-i\`emes de l'unit\'e. Si $i>0$, on note $\mu_{{\ell}^n}^{\otimes i}:=\mu_{{\ell}^n}\otimes\dots\otimes\mu_{{\ell}^n}$ ($i$ fois), et si $i<0$ on note $\mu_{{\ell}^n}^{\otimes i}:=\on{Hom}(\mu_{{\ell}^n}^{\otimes (-i)},\Z/{\ell}^n)$, et $\mu_{{\ell}^n}^{\otimes 0}:= \Z/\ell^n$. 
	
	Soient $k$ un corps et $X$ une $k$-vari\'et\'e, c'est-\`a-dire un $k$-sch\'ema s\'epar\'e de type fini. Si $F$ est un pre-faisceau sur $X$, on note $\check{H}^i(X,F)$ les groupes de cohomologie de \v{C}ech. Si $F$ est un faisceau sur $X$ pour la topologie \'etale sur $X$, on \'ecrit $H^{i}(X,F)$ pour les groupes
	de cohomologie \'etale. On note $H^i\left(X,\Z_{\ell}(j)\right)$ la limite projective des $H^i(X,\mu_{{\ell}^n}^{\otimes j})$ pour $n$ tendant vers l'infini, 
	$H^i(X,\Q_{\ell}(j)):= H^i(X,\Z_{\ell}(j))\otimes_{\Z_{\ell}}{\Q}_{\ell}$ et 	$H^i(X,\Q_{\ell}/\Z_{\ell}(j))$ la limite directe des 
	$H^i(X,\mu_{{\ell}^n}^{\otimes j})$.
	
	On note $\on{Pic}(X)$ le groupe de Picard $H^1_{\on{Zar}}(X,\G_{\on{m}}) \simeq H^1_{\text{\'et}}(X,\G_{\on{m}})$ et  $\on{Br}(X)$ le groupe de Brauer cohomologique $H^2_{\text{\'et}}(X,\G_{\on{m}})$. On note   $CH_i(X)$ le groupe des cycles de dimension $i$ modulo \'equivalence rationnelle et, si $X$ est lisse et irr\'eductible de dimension $d$, on pose $CH^i(X):= CH_{d-i}(X)$. Si $X$ est projective et g\'eom\'etriquement int\`egre, on note $\on{Pic}^0_{X/k}$ la composante connexe du sch\'ema de Picard de $X$ (qui existe d'apr\`es \cite[Theorem 5.4]{kleiman}) et, si $k$ est parfait, on note $(\on{Pic}^0_{X/k})_{\on{red}}$ sa r\'eduction (qui est une vari\'et\'e ab\'elienne si $X$ est g\'eom\'etriquement normale). Si $X$ est propre, lisse et irr\'eductible et $k$ est s\'eparablement clos, on note $\on{NS}(X)$   le groupe de N\'eron-Severi de $X$ (qui est un groupe ab\'elien de type fini), $\rho(X)$ le rang de $\on{NS}(X)$ et $b_i(X):=\on{dim}_{\Q_{\ell}}H^i(X,\Q_{\ell})$ les nombres de Betti de $X$. 	
	
	Si $A$ et $B$ sont deux groupes alg\'ebriques sur le corps $k$, on note $\on{Hom}_{k-\on{gr}}(A,B)$ l'ensemble des homomorphismes de $k$-groupes alg\'ebriques entre $A$ et $B$. Si $A$ et $B$ sont commutatifs, $\on{Hom}_{k-\on{gr}}(A,B)$ admet une structure naturelle de groupe ab\'elien. Si $k$ est s\'eparablement clos, $A$ est une vari\'et\'e ab\'elienne sur $k$ et $\ell$ est un nombre premier inversible dans $k$, on pose $T_{\ell}(A):=T_{\ell}(A(k))$ et $V_{\ell}(A):=T_{\ell}(A)\otimes_{\Z_{\ell}}\Q_{\ell}$.
	
\subsection*{Remerciements}
	Je tiens \`a remercier K{\fontencoding{T1}\selectfont\k{e}}stutis \v{C}esnavi\v{c}ius et le d\'epartement de Math\'ematiques d'Orsay (universit\'e Paris-Saclay) pour leur hospitalit\'e pendant l'ann\'ee 2021. Les commentaires d'un rapporteur sur certaines parties du pr\'ec\'edent article \cite{ctscavia1} ont sugg\'er\'e le pr\'esent travail. Je lui en sais gr\'e. Je remercie aussi Alexei Skorobogatov de m'avoir communiqu\'e une preuve alternative du lemme \ref{hochschild-courbe}, dans le cas o\`u la caract\'eristique de $k$ est nulle et $C$ admet un point $k$-rationnel. Je remercie les rapporteurs du pr\'esent article pour avoir lu attentivement mon manuscrit et pour avoir donn\'e des suggestions utiles.
	
	Je suis tr\`es reconnaissant \`a Jean-Louis Colliot-Th\'{e}l\`ene: il m'a sugg\'er\'e qu'un argument de normes pourrait \^{e}tre utilis\'e pour d\'emontrer le th\'eor\`eme \ref{mainthm}, il m'a aid\'e par de nombreuses discussions et a contribu\'e par beaucoup de commentaires et de corrections \`a cet article.
	
	\section{Pr\'eliminaires}		
	
	\subsection{Passage \`a la limite}	
	
	On rappelle ici des arguments bien connus de passage \`a la limite en alg\`ebre commutative et cohomologie galoisienne, qui seront utilis\'es plusieurs fois dans la suite.
	
	\begin{lemma}\label{tensor-limit}
		Soient $A$ un anneau commutatif avec identit\'e et $(M_n)_{n\in \mathbb{N}}$ un syst\`eme projectif de $A$-modules de longueur finie.
		\begin{enumerate}[label=(\alph*)]
		\item Si $N$ est un $A$-module de pr\'esentation finie, la fl\`eche naturelle  
		\[\varprojlim_n(M_n)\otimes_AN\to \varprojlim_n (M_n\otimes_AN)\] 
		est un isomorphisme de $A$-modules.
		\item Si $N$ est un $\Z$-module de type fini, la fl\`eche naturelle  
		\[\varprojlim_n(M_n)\otimes_{\Z}N\to \varprojlim_n (M_n\otimes_{\Z}N)\] 
		est un isomorphisme de $A$-modules.
		\end{enumerate}
	\end{lemma}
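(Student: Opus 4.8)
\emph{Plan de preuve.} L'approche consiste à ramener l'énoncé à l'exactitude de la limite projective sur les systèmes de modules de longueur finie, exactitude qui est automatique puisqu'un module de longueur finie est artinien.

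Pour (a), je commencerais par choisir une présentation finie $A^r\xrightarrow{u}A^s\to N\to 0$ de $N$. En appliquant $M_n\otimes_A-$ et en utilisant l'exactitude à droite du produit tensoriel, on obtient pour chaque $n$ une suite exacte $M_n^r\xrightarrow{1\otimes u}M_n^s\to M_n\otimes_A N\to 0$. En posant $L_n:=\ker(1\otimes u)$ et $K_n:=\ker(M_n^s\to M_n\otimes_A N)=\on{im}(1\otimes u)$, cela fournit des suites exactes courtes $0\to L_n\to M_n^r\to K_n\to 0$ et $0\to K_n\to M_n^s\to M_n\otimes_A N\to 0$, fonctorielles en $n$. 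Le point clé est que $M_n$ étant de longueur finie, les modules $M_n^r$, $M_n^s$ et tous leurs sous-quotients --- en particulier $L_n$ et $K_n$, ainsi que $M_n\otimes_AN$ --- le sont aussi, donc sont artiniens ; or un système projectif de modules artiniens vérifie automatiquement la condition de Mittag-Leffler (pour $m$ fixé, les images des applications de transition vers $M_m$ forment une suite décroissante de sous-modules du module artinien $M_m$, qui se stabilise), d'où l'annulation de tous les $\varprojlim^1$ en jeu. En passant à la limite projective, on en déduit l'exactitude de $\varprojlim_n M_n^r\to\varprojlim_n M_n^s\to\varprojlim_n(M_n\otimes_A N)\to 0$.

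Pour conclure (a), j'utiliserais que les sommes directes finies commutent aux limites projectives, d'où $\varprojlim_n M_n^r=(\varprojlim_n M_n)^r$ et $\varprojlim_n M_n^s=(\varprojlim_n M_n)^s$, ainsi que l'exactitude à droite de $\otimes$, qui donne $(\varprojlim_n M_n)^r\to(\varprojlim_n M_n)^s\to(\varprojlim_n M_n)\otimes_A N\to 0$. L'application naturelle de l'énoncé relie ces deux suites par un diagramme commutatif valant l'identité sur les deux premiers termes ; comme source et but sont alors l'un et l'autre le conoyau de la même application $(\varprojlim_n M_n)^r\to(\varprojlim_n M_n)^s$, c'est un isomorphisme (on peut aussi invoquer le lemme des cinq). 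Pour (b), il suffit d'observer que, $\Z$ étant noethérien, $N$ est de présentation finie sur $\Z$, que $M\otimes_{\Z}\Z^s=M^s$ pour tout groupe abélien $M$, et que les modules et noyaux intervenant dans le calcul restent des $A$-modules de longueur finie puisque $M_n$ l'est (donc artiniens) ; l'argument de (a) s'applique alors mot pour mot avec $A$ remplacé par $\Z$ dans la présentation.

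La principale difficulté --- en réalité le seul ingrédient non formel --- est l'annulation des limites dérivées $\varprojlim^1$, fondée sur le fait qu'un système projectif de modules de longueur finie est de Mittag-Leffler. Tout le reste (exactitude à droite du produit tensoriel, commutation des sommes finies aux limites projectives, lemme des cinq) est formel.
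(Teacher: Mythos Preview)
Your argument for (a) is correct and essentially identical to the paper's: take a finite presentation, tensor, observe that all the auxiliary kernels sit inside $M_n^s$ and hence have finite length, invoke Mittag--Leffler to pass to the limit, and finish with the five-lemma comparison.

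For (b) the paper proceeds slightly differently: instead of rerunning (a) with a $\Z$-presentation, it decomposes $N\simeq\Z^r\oplus\bigoplus_i\Z/m_i$, reduces to the case $N=\Z/m$, and then notes that $M\otimes_{\Z}\Z/m\simeq M\otimes_A A/mA$ with $A/mA$ a finitely presented $A$-module, so that (a) applies directly. Your route is equally valid and arguably more uniform; the paper's reduction has the mild advantage of making the $A$-module structure on the tensor products visibly come from (a) without any further check.
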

	
	\begin{proof}
		(a) Comme $N$ est de pr\'esentation finie, on dispose d'une suite exacte
		\begin{equation}\label{e.tensor-limit.1}A^r\xrightarrow{\phi'} A^s\xrightarrow{\phi} N\to 0.
		\end{equation}
		Si on tensorise la suite exacte (\ref{e.tensor-limit.1}) par $\varprojlim_n M_n$, on obtient une suite exacte
		\begin{equation}\label{e.tensor-limit.2}(\varprojlim_n M_n)\otimes_AA^r\to (\varprojlim_n M_n)\otimes_AA^s\to (\varprojlim_n M_n)\otimes_A N\to 0.
		\end{equation}
		Pour tout $n\in \mathbb{N}$, soient $K_n:=\on{ker}(\on{id}_{M_n}\otimes\phi)$ et $K'_n:=\on{ker}(\on{id}_{M_n}\otimes\phi')$. Si on tensorise la suite exacte (\ref{e.tensor-limit.1}) par $M_n$, on obtient des suites exactes courtes
		\begin{equation}\label{e.tensor-limit.11}0\to K_n\to M_n\otimes_AA^s\to M_n\otimes_A N\to 0,
		\end{equation}
		\begin{equation}\label{e.tensor-limit.12}0\to K'_n\to M_n\otimes_AA^r\to K_n\to 0.
		\end{equation}
		Comme $K_n\subset M_n\otimes_AA^s$ et la longueur de $M_n\otimes_AA^s\simeq M_n^s$ est finie, par \cite[\S 1.2 p.12]{matsumura} la longueur de $K_n$ est finie. On en d\'eduit que la suite des $K_n$ satisfait la condition de Mittag-Leffler. Le m\^eme argument montre que la suite des $K'_n$ satisfait la condition de Mittag-Leffler. Donc, d'apr\`es \cite[Proposition (2.7.4)]{neukirch2008cohomology} les suites (\ref{e.tensor-limit.11}) et (\ref{e.tensor-limit.12}) restent exactes apr\`es passage \`a la limite projective en $n$. Ceci donne une suite exacte
		\begin{equation}\label{e.tensor-limit.3}\varprojlim_n (M_n\otimes_AA^r)\to \varprojlim_n (M_n\otimes_AA^s)\to \varprojlim_n(M_n\otimes_A N)\to 0,
		\end{equation}
		o\`u les homomorphismes sont induits par $\phi'$ et $\phi$.
		En combinant les suites exactes (\ref{e.tensor-limit.2}) et (\ref{e.tensor-limit.3}), on obtient un diagramme commutatif avec lignes exactes
		\[
		\begin{tikzcd}
			(\varprojlim_n M_n)\otimes_AA^r\arrow[r] \arrow[d,"\wr"] & (\varprojlim_n M_n)\otimes_AA^s \arrow[r] \arrow[d,"\wr"] & (\varprojlim_n M_n)\otimes_A N \arrow[r] \arrow[d] & 0 \\
			\varprojlim_n (M_n\otimes_AA^r)\arrow[r]  &  \varprojlim_n (M_n\otimes_AA^s) \arrow[r]  & \varprojlim_n(M_n\otimes_A N)\arrow[r]  & 0,
		\end{tikzcd}
		\]
		o\`u les fl\`eches verticales sont induites par la propri\'et\'e universelle de la limite inverse. Comme la limite inverse commute avec les sommes directes finies, les fl\`eches verticales de gauche et du milieu sont des isomorphismes. On conclut que la fl\`eche verticale de droite est aussi un isomorphisme, comme voulu.
		
		(b) La structure de $A$-module sur $M_n\otimes_{\Z}N$ est induite par celle de $M_n$. On \'ecrit $N\simeq \Z^r\oplus (\oplus_i\Z/m_i)$. Comme le produit tensoriel commute avec la somme directe, il suffit de montrer que pour tout $m\geq 1$ la fl\`eche
		\[\varprojlim_n(M_n)\otimes_{\Z}\Z/m\to \varprojlim_n (M_n\otimes_{\Z}\Z/m)\]
		est un isomorphisme. Ceci suit du fait que pour tout $A$-module $M$, $M\otimes_\Z \Z/m\simeq M\otimes_AA/mA$, du fait que $A/mA$ est un $A$-module de pr\'esentation finie et de la partie (a).
	\end{proof}
	
	\begin{lemma}\label{lim-tensor-lim}
		Soient $A$ un anneau commutatif avec identit\'e, $(M_n)_{n\in \mathbb{N}}$ et $(N_n)_{n\in \mathbb{N}}$ deux syst\`emes projectifs de $A$-modules de longueur finie tels que $M:=\varprojlim_n M_n$ et $N:=\varprojlim_n N_n$ sont des
		$A$-modules de pr\'esentation finie. Alors la fl\`eche naturelle
		\[M\otimes_AN \to \varprojlim_n (M_n\otimes_AN_n)\]
		est un isomorphisme de $A$-modules. 
	\end{lemma}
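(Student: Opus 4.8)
Voici le plan. L'id\'ee est de r\'ealiser le membre de droite comme une limite projective it\'er\'ee, puis d'appliquer deux fois le lemme \ref{tensor-limit}(a).

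\emph{Premi\`ere \'etape : passage \`a une limite it\'er\'ee.} La diagonale $\{(n,n):n\in\mathbb N\}$ est cofinale dans $\mathbb N\times\mathbb N$ muni de l'ordre produit. Pour le syst\`eme projectif $(M_m\otimes_AN_n)_{(m,n)\in\mathbb N\times\mathbb N}$, sa limite projective sur $\mathbb N\times\mathbb N$ co\"incide donc avec sa restriction \`a la diagonale, d'o\`u un isomorphisme canonique
\[\varprojlim_n(M_n\otimes_AN_n)\;\cong\;\varprojlim_m\Bigl(\varprojlim_n(M_m\otimes_AN_n)\Bigr),\]
o\`u la limite int\'erieure est prise pour $m$ fix\'e.

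\emph{Deuxi\`eme \'etape : calcul de la limite int\'erieure.} Pour $m$ fix\'e, on veut \'etablir que la fl\`eche naturelle $M_m\otimes_AN\to\varprojlim_n(M_m\otimes_AN_n)$ est un isomorphisme. Si $M_m$ \'etait de pr\'esentation finie, ce serait exactement le lemme \ref{tensor-limit}(a) appliqu\'e au syst\`eme $(N_n)_n$ et au module $M_m$ ; en g\'en\'eral $M_m$ n'est que de longueur finie. On s'y ram\`ene par d\'evissage le long d'une suite de composition de $M_m$ : tout sous-quotient de $M_m$ \'etant de longueur finie, les noyaux et conoyaux des fl\`eches de transition en jeu sont artiniens, donc v\'erifient la condition de Mittag-Leffler ; cela permet de commuter $\varprojlim_n$ aux suites exactes courtes obtenues en tensorisant, puis d'appliquer le lemme des cinq. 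On est ainsi ramen\'e au cas o\`u $M_m=A/\mathfrak m$ est simple, i.e. \`a l'isomorphisme $N/\mathfrak mN\xrightarrow{\sim}\varprojlim_n(N_n/\mathfrak mN_n)$. C'est ici qu'on utilise que $N$ est de pr\'esentation finie : en tensorisant une pr\'esentation $A^r\to A^s\to N\to0$ par $A/\mathfrak m$, on voit que $N/\mathfrak mN$ est de longueur finie, et une nouvelle utilisation de la condition de Mittag-Leffler (pour le syst\`eme des $\mathfrak mN_n$, qui sont de longueur finie) conclut. Il vient $\varprojlim_n(M_m\otimes_AN_n)\cong M_m\otimes_AN$ pour tout $m$.

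\emph{Troisi\`eme \'etape : conclusion.} Les deux premi\`eres \'etapes donnent $\varprojlim_n(M_n\otimes_AN_n)\cong\varprojlim_m(M_m\otimes_AN)$. Comme $N$ est de pr\'esentation finie, le lemme \ref{tensor-limit}(a) appliqu\'e au syst\`eme $(M_m)_m$ et au module $N$ fournit $\varprojlim_m(M_m\otimes_AN)\cong(\varprojlim_mM_m)\otimes_AN=M\otimes_AN$. Toutes les identifications \'etant fonctorielles, l'isomorphisme compos\'e co\"incide avec la fl\`eche naturelle de l'\'enonc\'e. Le point d\'elicat est la deuxi\`eme \'etape : c'est le fait que les termes interm\'ediaires $M_m$ ne soient a priori que de longueur finie sur un anneau quelconque qui oblige au d\'evissage et \`a manipuler soigneusement la condition de Mittag-Leffler — c'est la finitude de la longueur, donc l'artinianit\'e, qui rend ces manipulations licites, l'hypoth\`ese de pr\'esentation finie sur $M$ et $N$ ne servant qu'\`a garantir que $N/\mathfrak mN$ (et $M\otimes_AN$) est de longueur finie.
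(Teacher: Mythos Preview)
Your three-step skeleton — cofinality of the diagonal, then two applications of Lemma~\ref{tensor-limit}(a) — is exactly the paper's proof, which writes it as a one-line chain of isomorphisms and attributes the first two arrows to Lemma~\ref{tensor-limit}. You have correctly spotted a subtlety the paper passes over in the second arrow: for the inner limit one needs $M_m\otimes_A N\cong\varprojlim_n(M_m\otimes_A N_n)$, and Lemma~\ref{tensor-limit}(a) applies only if $M_m$ is of finite presentation, which ``finite length'' does not guarantee over a general ring (take $A/\mathfrak m$ with $\mathfrak m$ not finitely generated).

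Your proposed d\'evissage, however, does not close this gap. Tensoring a short exact sequence $0\to M'\to M_m\to M''\to 0$ by $N$ or by $N_n$ yields only right-exact three-term sequences, not short exact ones; the five lemma is not available in that form, and a genuine induction would have to carry $\mathrm{Tor}_1$ along. Your base case has the analogous defect: Mittag--Leffler for $(\mathfrak m N_n)_n$ gives $\varprojlim_n(N_n/\mathfrak m N_n)\cong N/\varprojlim_n\mathfrak m N_n$, but you still need $\varprojlim_n\mathfrak m N_n=\mathfrak m N$, and knowing that $N/\mathfrak m N$ has finite length does not by itself give this. So as written, neither the paper's bare citation nor your d\'evissage establishes the lemma in the stated generality. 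In the paper's applications one has $A=\Z_\ell$, which is Noetherian, so finite length implies finite presentation and the direct appeal to Lemma~\ref{tensor-limit}(a) is legitimate; that is presumably why the paper does not dwell on the point.
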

	
	\begin{proof}
		On a une cha\^{\i}ne d'isomorphismes de $A$-modules:	
\[
			M\otimes_AN\stackrel{\sim}{\longrightarrow} \varprojlim_m(M_m\otimes_A N)
			\stackrel{\sim}{\longrightarrow}\varprojlim_m(\varprojlim_n(M_m\otimes_A N_n)
			\stackrel{\sim}{\longrightarrow} \varprojlim_{(m,n)} (M_m\otimes_A N_n) \stackrel{\sim}{\longrightarrow}  \varprojlim_{n} (M_n\otimes_A N_n).\]
		Le premier et le deuxi\`eme isomorphisme viennent du lemme \ref{tensor-limit}, le troisi\`eme est un cas particulier de la formule pour la limite sur une cat\'egorie produit et le dernier suit du fait que la suite des $(n,n)$ pour $n\geq 1$ est cofinale dans $\mathbb{N}\times \mathbb{N}$. 
	\end{proof}

	\begin{lemma}\label{limit-coho}
		Soient $G=\hat{\Z}$ et $\sigma\in G$ un g\'en\'erateur topologique.
		\begin{enumerate}[label=(\alph*)]
		\item Si $M$ est un $G$-module continu fini, alors 
		\begin{align*}
			H^i(G,M)\simeq 
			\begin{cases}
				M^G, &i=0,\\
				M/(1-\sigma)M, &i=1,\\
				0, &i\geq 2.
			\end{cases}	
		\end{align*}
		En particulier, $H^i(G,M)$ est fini pour tout $i\geq 0$.
		\item Soit $(M_n)_{n\in \mathbb{N}}$ un syst\`eme projectif de $\Z_{\ell}$-modules finis avec action continue de $G$, tel que le $\Z_{\ell}$-module $M:=\varprojlim_n M_n$ est de type fini. Alors pour tout $i\geq 0$ la fl\`eche naturelle
		\[H^i(G,M)\longrightarrow \varprojlim_n H^i(G,M_n)\]
		est un isomorphisme.
		\end{enumerate}
	\end{lemma}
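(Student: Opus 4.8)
Je traiterais (a) et (b) s\'epar\'ement, (b) d\'ecoulant de (a).

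\emph{Approche pour (a).} Le cas $i=0$ est \'evident. Pour les autres degr\'es, le plan est d'exploiter que $G=\hat\Z$ est procyclique. Puisque $M$ est fini et l'action continue, il existe $n_0\geq 1$ tel que $n_0\hat\Z$ agisse trivialement sur $M$; pour tout multiple $m$ de $n_0$ on a alors $M^{m\hat\Z}=M$ et $G/m\hat\Z\simeq\Z/m$, de sorte que
\[H^i(G,M)=\varinjlim_{k\geq 1}H^i(\Z/n_0k,M),\]
la limite inductive \'etant prise le long des applications d'inflation, l'ensemble des $k\geq 1$ \'etant ordonn\'e par divisibilit\'e. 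Je calculerais ensuite $H^i(\Z/m,M)$ pour $m$ divisible par $n_0$ au moyen de la r\'esolution libre p\'eriodique du $\Z[\Z/m]$-module $\Z$: en notant $\sigma$ l'image du g\'en\'erateur topologique et $N_m:=\sum_{g\in\Z/m}g$ (qui agit sur $M$ comme $\tfrac{m}{n_0}\sum_{s=0}^{n_0-1}\sigma^s$, car $\sigma^{n_0}=\on{id}$ sur $M$), on trouve $H^0=M^{\Z/m}$, $H^{2j-1}=\on{ker}(N_m)/(\sigma-1)M$ et $H^{2j}=M^{\Z/m}/N_mM$ pour $j\geq 1$. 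Le c\oe{}ur technique, et la seule vraie difficult\'e de (a), est le contr\^ole des applications de transition: en construisant explicitement un morphisme de complexes $P^{m'}_\bullet\to P^m_\bullet$ entre les deux r\'esolutions p\'eriodiques pour $m\mid m'$, on v\'erifie que l'inflation $H^i(\Z/m,M)\to H^i(\Z/m',M)$ est induite, sur les cocha\^ines de degr\'e $i$ identifi\'ees \`a $M$, par la multiplication par $(m'/m)^{\lfloor i/2\rfloor}$. On conclut alors: en prenant $k$ divisible par l'exposant $e$ de $M$, on a $N_{n_0k}=0$ sur $M$, donc $H^{2j-1}(\Z/n_0k,M)=M/(\sigma-1)M$ et $H^{2j}(\Z/n_0k,M)=M^G$ pour $j\geq 1$; les transitions sont l'identit\'e en degr\'es $0$ et $1$, d'o\`u $H^0(G,M)=M^G$ et $H^1(G,M)=M/(\sigma-1)M$, alors qu'en degr\'e $i\geq 2$ ce sont des multiplications par une puissance strictement positive de $k'$, donc nulles d\`es que $e\mid k'$: d'o\`u $H^i(G,M)=0$. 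Enfin $M^G$ et $M/(\sigma-1)M$ sont des sous-quotients de $M$, donc finis. (La nullit\'e en degr\'e $\geq 2$ peut aussi se d\'eduire de $\on{cd}(\hat\Z)=1$ sur les modules de torsion; et $\hat\Z$ \'etant un groupe bon au sens de Serre, on a m\^eme $H^i(\hat\Z,M)\simeq H^i(\Z,M)$ pour $M$ fini, ce qui ram\`ene tout \`a la r\'esolution de longueur $2$ de $\Z$ sur $\Z[\Z]$.)

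\emph{Approche pour (b).} Le cas $i=0$ est imm\'ediat, $(-)^G$ commutant aux limites projectives: $H^0(G,M)=M^G=\varprojlim_n M_n^G=\varprojlim_n H^0(G,M_n)$. Pour $i\geq 1$, j'invoquerais la suite exacte de \cite[\S 2.7]{neukirch2008cohomology} associ\'ee \`a la cohomologie continue d'une limite projective de $G$-modules finis:
\[0\longrightarrow {\varprojlim_n}^{1}\,H^{i-1}(G,M_n)\longrightarrow H^i(G,M)\longrightarrow\varprojlim_n H^i(G,M_n)\longrightarrow 0,\]
applicable ici puisque $M=\varprojlim_n M_n$ et que les $M_n$ sont finis. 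D'apr\`es (a), chaque $H^{i-1}(G,M_n)$ est fini, donc le syst\`eme projectif $(H^{i-1}(G,M_n))_n$ satisfait la condition de Mittag-Leffler et ${\varprojlim_n}^{1}\,H^{i-1}(G,M_n)=0$; la fl\`eche centrale, qui est pr\'ecis\'ement l'application naturelle de l'\'enonc\'e, est donc un isomorphisme. (Pour $i\geq 2$ les deux membres sont du reste nuls d'apr\`es (a), si bien que seul le cas $i=1$ demande r\'eellement un argument.) Une fois (a) \'etablie, il n'y a ici aucune difficult\'e s\'erieuse: tout repose sur la suite exacte ci-dessus et sur la finitude fournie par (a).
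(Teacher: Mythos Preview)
Your argument is correct. For part (b) you do exactly what the paper does: invoke the $\varprojlim^1$ exact sequence from \cite[\S 2.7]{neukirch2008cohomology} (the paper cites Corollary~(2.7.6) directly) and use the finiteness from (a) to kill the $\varprojlim^1$ term via Mittag--Leffler. For part (a), the paper is more economical: it simply quotes $\on{cd}(\hat\Z)=1$ for the vanishing in degrees $\geq 2$, and for $H^1$ writes down the explicit map $[\{\alpha_g\}]\mapsto \alpha_\sigma+(1-\sigma)M$ and declares it an isomorphism. Your primary route---passing to finite cyclic quotients, computing with the periodic resolution, and tracking inflation maps as multiplication by $(m'/m)^{\lfloor i/2\rfloor}$---is more laborious but has the merit of being self-contained and of making the vanishing in high degree completely explicit rather than appealing to the cohomological-dimension fact as a black box. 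You do mention the $\on{cd}=1$ shortcut yourself, so you are aware of the faster path; either works.
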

	
	\begin{proof}
		(a) L'assertion $H^0(G,M)=M^G$ est \'evidente, et $H^i(G,M)=0$ pour tout $i\geq 2$ parce que $\on{cd}(G)=1$. On a un homomorphisme $\phi:H^1(G,M)\to M/(1-\sigma)M$ d\'efini comme suit: si $\alpha\in H^1(G,M)$ est repr\'esent\'e par le cocycle $\set{\alpha_g}_{g\in G}$ \`a valeurs dans $M$, on pose $\phi(\alpha):=\alpha_{\sigma}+ (1-\sigma)M$. On v\'erifie ais\'ement qui $\phi$ est bien d\'efini et un isomorphisme de groupes.
		
		(b) Ceci suit de (a) et de \cite[Corollary (2.7.6)]{neukirch2008cohomology}.
	\end{proof}

	\subsection{Application trace} 
	Soient $f:X'\to X$ un morphisme fini \'etale de sch\'emas et $F$ un faisceau ab\'elien \'etale sur $X$. On \'ecrit \[\on{tr}_f:f_*f^*F\to F\] pour indiquer le morphisme trace, d\'efini dans \cite[IX, 5.1]{sga4III}. Il est univoquement caract\'eris\'e par la compatibilit\'e avec tout changement de base \'etale et par le fait que si $X'=\amalg_{j=1}^dX$ et $f$ est l'identit\'e sur chaque terme, alors l'application induite ${f}_*{f}^*F\simeq F^d\to F$ est la somme. 
	
	Comme $f$ est fini, $f_*$ est exact. On a alors pour tout $i\geq 0$ un isomorphisme $H^i(X,f_*f^*F)\simeq H^i(X',f^*F)$, et donc un homomorphisme induit
	\[\on{tr}_f:H^i(X',f^*F)\to H^i(X,F).\]
	
	Une d\'efinition \'equivalente de $\on{tr}_f$ est donn\'ee dans \cite[XVII, Th\'eor\`eme 6.2.3]{sga4III}: comme $f$ est fini \'etale, la fl\`eche naturelle $f_!\to f_*$ est un isomorphisme, et alors $\on{tr}_f$ est induit par l'homomorphisme d'adjonction $f_*f^*\simeq f_!f^*\to \on{id}$.
	
	D'apr\`es \cite[XVII, Th\'eor\`eme 6.2.3 (Var 1) et (Var 2)]{sga4III},  $\on{tr}_f$ est covariant en $F$ et commute avec tout changement de base.
	
	Soient $\ell$ un nombre premier inversible dans $k$ et $j\in \Z$. La fonctorialit\'e de $\on{tr}_f$ nous donne, pour tout $n\geq 0$, un carr\'e commutatif
	\[
	\begin{tikzcd}
		H^i\left(X',\mu^{\otimes j}_{\ell^{n+1}}\right) \arrow[r, "\on{tr}_f"]  \arrow[d] & H^i\left(X,\mu^{\otimes j}_{\ell^{n+1}}\right) \arrow[d] \\
		H^i\left(X',\mu^{\otimes j}_{\ell^n}\right) \arrow[r, "\on{tr}_f"] & H^i\left(X,\mu^{\otimes j}_{\ell^n}\right),
	\end{tikzcd}
	\]
	les fl\`eches verticales \'etant induites par l'homomorphisme de puissance $\ell$-i\`eme. Par passage \`a la limite projective en $n$, on obtient un homomorphisme
	\[\on{tr}_f: H^i(X',\Z_{\ell}(j))\to H^i(X,\Z_{\ell}(j)).\]

	\subsection{Application cycle} 
	Soient $k$ un corps, $X$ une $k$-vari\'et\'e lisse \'equidimension\-nelle et $n$ un entier inversible dans $k$. 
	Pour tout $i\geq 0$, on note
	\[\on{cl}_X:CH^i(X)\to H^{2i}\left(X,\mu_{n}^{\otimes i}\right)\]
	l'application cycle en cohomologie \'etale, d\'efinie de fa\c{c}on cohomologique dans \cite[Chapitre 4, \S 2.2.10]{SGA4.5}, ou en termes d'homologie et dualit\'e de Poincar\'e dans le paragraphe apr\`es \cite[Chapitre 4, (2.3.1.3)]{SGA4.5}.
	Comme on peut le v\'erifier \`a l'aide de la d\'efinition cohomologique, les applications $\on{cl}_X:CH^i(X)\to H^{2i}(X,\mu_{n}^{\otimes i})$ forment un syst\`eme inverse de fl\`eches compatibles pour $n$ variable. Si $\ell$ est un nombre premier inversible dans $k$, on notera
	\[\on{cl}_X:CH^i(X)\to H^{2i}(X,\Z_{\ell}(i))\]
	l'homomorphisme obtenu par passage \`a la limite sur $n=\ell^m$, $m\geq 0$.
	\begin{lemma}\label{compat}
		Soit $f:X'\to X$ un morphisme fini \'etale  de $k$-vari\'et\'es lisses \'equidimensionnelles. Pour tout $i\geq 0$ et tout nombre premier $\ell$ inversible dans $k$, le diagramme
		\[
		\begin{tikzcd}
			CH^i(X') \arrow[r,"\on{cl}_{X'}"] \arrow[d,"f_*"] &  H^{2i}(X',\Z_{\ell}(i)) \arrow[d,"\on{tr}_f"] \\
			CH^i(X) \arrow[r,"\on{cl}_{X}"] & H^{2i}(X,\Z_{\ell}(i))
		\end{tikzcd}
		\]	
		commute.
	\end{lemma}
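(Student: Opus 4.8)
The plan is to reduce the Lemma to a comparison between the étale trace map and the covariant functoriality of Borel--Moore homology, exploiting the homological description of the cycle class recalled just above.

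First I would pass to finite coefficients. By construction, each of $\on{cl}_{X'}$, $\on{cl}_X$ and $\on{tr}_f$ is the projective limit over $m$ of its analogue with coefficients in $\mu_{\ell^m}^{\otimes i}$, while $f_*\colon CH^i(X')\to CH^i(X)$ does not depend on $m$; so it suffices to prove, for each $m$ and compatibly in $m$, that $\on{tr}_f\circ\on{cl}_{X'}=\on{cl}_X\circ f_*$ as maps $CH^i(X')\to H^{2i}(X,\mu_{\ell^m}^{\otimes i})$. Both composites being homomorphisms and $CH^i(X')$ being generated by classes of integral closed subschemes, I would then fix such a $Z'\subseteq X'$ of codimension $i$ and set $d:=\dim X=\dim X'$ (so $X'$ is equidimensional of dimension $d$, being étale over $X$) and $Z:=f(Z')_{\mathrm{red}}$. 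Since $f$ is finite, $Z$ is integral closed of dimension $d-i$ and $f|_{Z'}\colon Z'\to Z$ is finite surjective; writing $\delta:=[\kappa(Z'):\kappa(Z)]$, we have $f_*[Z']=\delta\,[Z]$ in $CH^i(X)$.

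Next I would invoke Poincaré duality and the homological definition of $\on{cl}$. For a smooth equidimensional $k$-variety $Y$ of dimension $d$, duality gives a canonical isomorphism $H^{2i}(Y,\mu_{\ell^m}^{\otimes i})\xrightarrow{\ \sim\ }H_{2(d-i)}(Y,\Z/\ell^m(d-i))$ with the homology theory of \cite[Chapitre 4]{SGA4.5}, carrying the cycle class of an integral closed subscheme of dimension $d-i$ to its fundamental class. This homology theory is covariant along the proper morphism $f$, and on fundamental classes the homological pushforward agrees with the Chow-theoretic one, again because $f|_{Z'}$ is finite of generic degree $\delta$; hence, after transport through duality, $f_*^{\mathrm{hom}}\big(\on{cl}_{X'}([Z'])\big)=\on{cl}_X\big(f_*[Z']\big)$. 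Consequently the Lemma is equivalent to the commutativity of the square
\[
\begin{array}{ccc}
H^{2i}(X',\mu_{\ell^m}^{\otimes i}) & \xrightarrow{\ \sim\ } & H_{2(d-i)}(X',\Z/\ell^m(d-i))\\
\ \downarrow{\scriptstyle\on{tr}_f} & & \ \downarrow{\scriptstyle f_*^{\mathrm{hom}}}\\
H^{2i}(X,\mu_{\ell^m}^{\otimes i}) & \xrightarrow{\ \sim\ } & H_{2(d-i)}(X,\Z/\ell^m(d-i)),
\end{array}
\]
that is, to the assertion that Poincaré duality carries $\on{tr}_f$ to the homological pushforward along $f$.

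The verification of this compatibility is where finite étaleness enters, and it is the main point. Poincaré duality for smooth $Y/k$ of dimension $d$ is built from the absolute fundamental class $Rg_Y^{!}\Z/\ell^m\cong\Z/\ell^m(d)[2d]$ ($g_Y\colon Y\to\Spec k$), and the homological pushforward along $f$ is induced, via the compatibility of these fundamental classes with $f$, by the counit $f_!f^{!}\to\on{id}$. For $f$ finite étale one has canonical identifications $f^{!}=f^{*}$ and $f_!=f_*$, under which this counit $f_*f^{*}\to\on{id}$ is exactly the étale trace morphism of \cite[IX, 5.1]{sga4III}, in the form recalled above via \cite[XVII, Théorème 6.2.3]{sga4III}. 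Inserting this into the construction of duality yields the commutativity of the square, and together with the previous step it proves the Lemma. I expect the only genuine difficulty to be this last unwinding — tracking the construction of duality and of the homological transfer down to the counit and matching it with the two equivalent descriptions of $\on{tr}_f$, all while keeping the Tate twists straight. Two shortcuts are worth keeping in mind: one may instead cite directly the compatibility of $\on{cl}$ with proper (Gysin) pushforward from \cite{SGA4.5} and identify the Gysin map of a finite étale morphism — which carries no Tate twist and no degree shift, since $\dim X'=\dim X$ — with $\on{tr}_f$; or one may localise at the generic point $\eta_Z$ of $Z$, where $f$ restricts to a finite étale cover of the regular local ring $\O_{X,\eta_Z}$ and the identity becomes an explicit computation with cohomology classes supported on $Z'$ and $Z$.
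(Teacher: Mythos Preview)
Your proposal is correct and follows essentially the same route as the paper: reduce to finite coefficients, factor the cycle class through étale homology via Poincaré duality, check that the homological pushforward agrees with the Chow pushforward on fundamental classes, and then verify that Poincaré duality identifies $\on{tr}_f$ with the homological pushforward. The only difference is bibliographic: for this last identification the paper simply invokes \cite[XVIII, Lemme 3.2.3]{sga4III}, whereas you unwind it via the counit $f_!f^!\to\on{id}$ and the identifications $f^!=f^*$, $f_!=f_*$ for $f$ finite étale --- which is exactly the content of that lemma.
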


	\begin{proof}
		Pour tout entier $n\geq 1$ inversible dans $k$, on a un diagramme commutatif
		\begin{equation}\label{compat-fini}
			\begin{tikzcd}
				CH^i(X') \arrow[r,"\on{cl}^h_{X'}"] \arrow[d,"f_*"] &  H_{2i}(X',i,n) \arrow[d,"f_*"] \arrow[r,"\sim"]  & H^{2i}\left(X',\mu_n^{\otimes i}\right)  \arrow[d,"\on{tr}_f"]  \\
				CH^i(X) \arrow[r,"\on{cl}^h_{X}"] & H_{2i}(X,i,n) \arrow[r,"\sim"] & H^{2i}\left(X,\mu_n^{\otimes i}\right).
			\end{tikzcd}
		\end{equation}
		Ici $H_{2i}(X',i,n)$ et $H_{2i}(X,i,n)$ sont les groupes d'homologie \'etale et les isomorphismes horizontaux de droite proviennent de la dualit\'e de Poincar\'e; voir \cite[Example (2.1)]{bloch-ogus}. Dans (\ref{compat-fini}), la commutativit\'e du carr\'e de droite est une cons\'equence de \cite[XVIII, Lemme 3.2.3]{sga4III}. La commutativit\'e du carr\'e de gauche suit de la fonctorialit\'e de l'homologie \'etale par rapport aux morphismes propres \cite[p. 185 ligne 20]{bloch-ogus} et du fait que, si $Y'\subset X'$ est un sous-sch\'ema ferm\'e int\`egre de codimension $i$, $Y:=f(Y')$ avec sa structure r\'eduite, $\eta_Y$ et $\eta_{Y'}$ sont les classes fondamentales homologiques de $Y$ et $Y'$ (voir \cite[(1.3.4) et p. 186 ligne -2]{bloch-ogus}) et $g:=f|_{Y'}:Y'\to Y$, alors $g_*(\eta_{Y'})=\on{deg}(g)\cdot\eta_{Y}$ (voir \cite[(7.1.2)]{bloch-ogus}) et $f_*([Y'])=\on{deg}(g)\cdot [Y]$. 
		
		Les fl\`eches horizontales compos\'ees dans (\ref{compat-fini}) sont $\on{cl}_{X'}$ et $\on{cl}_X$. On conclut par passage \`a la limite projective dans (\ref{compat-fini}) en $n=\ell^m$, $m\geq 1$.
	\end{proof}

	\subsection{Hochschild-Serre}\label{hoch-serre-subsec}
	Soient  $\mc{A}$, $\mc{B}$ et $\mc{C}$ des cat\'egories ab\'eliennes et $F:\mc{A}\to \mc{B}$ et $G:\mc{B}\to\mc{C}$ des foncteurs exacts \`a gauche. Supposons que $\mc{A}$ et $\mc{B}$ ont suffisamment d'injectifs et que $F(I)$ est $G$-acyclique pour tout injectif $I$ de $\mc{A}$. Alors pour tout objet $A$ de $\mc{A}$ on a la suite spectrale de Grothendieck (voir \cite[015N]{stacks-project}):
	\begin{equation}\label{groth}
		E_{2}^{p,q}=(R^pG)(R^qF)(A)\Rightarrow R^{p+q}(G\circ F)(A).
	\end{equation}
	
	Soient $k$ un corps, $X$ une $k$-vari\'et\'e et $F$ un faisceau ab\'elien \'etale sur $X$. 
	On a la suite spectrale d'Hochschild-Serre:
	\begin{equation}\label{hochschild-serre-ss}
		E_2^{p,q}=H^p\left(k, H^q(\cl{X},\cl{F})\right)\Rightarrow H^{p+q}(X,F).
	\end{equation}
	On rappelle ici la construction de la suite spectrale (\ref{hochschild-serre-ss}) donn\'ee dans \cite[VIII, Proposition 8.4]{sga4II}; elle nous sera utile dans la preuve du lemme \ref{cech-derived}. Pour toute extension galoisienne finie $k'/k$, la projection $X_{k'}\to X$ est un morphisme couvrant dans le petit site \'etale sur $X$. Par \cite[V, Corollaire 3.4]{sga4I}, on a la suite spectrale de Cartan-Leray (cas particulier de la suite spectrale (\ref{groth})): 
	\begin{equation}\label{cartan-leray}
		E_2^{p,q}=\check{H}^p\left(X_{k'}/X,\mc{H}^q(X,F)\right)\Rightarrow H^{p+q}(X,F).
	\end{equation}
	Ici $\mc{H}^q(X,F)$ est le pr\'efaisceau \'etale sur $X$ qui \`a tout morphisme \'etale de type fini $V\to X$ associe $H^q(V,F|_V)$. Un calcul standard montre que le complexe de \v{C}ech du faisceau $\mc{H}^q(X,F)$ associ\'e au morphisme $X_{k'}\to X$ est isomorphe au complexe de cocha\^{i}nes homog\`enes de $\on{Gal}(k'/k)$ dans $H^q(X_{k'},F|_{X_{k'}})$. Cet isomorphisme est rappel\'e en d\'etail dans \cite[Example III.2.6]{milne1980etale}. On obtient des isomorphismes 
	\begin{equation}\label{cech-galois-isom}
	\check{H}^p\left(X_{k'}/X,\mc{H}^q(X,F)\right)\simeq H^p\left(\on{Gal}(k'/k),H^q(X_{k'},F|_{X_{k'}})\right)
	\end{equation} pour tout $p,q\geq 0$. Par cons\'equent la suite spectrale (\ref{cartan-leray}) induit une suite spectrale
	\begin{equation}\label{hochschild-serre-fini}
		E_2^{p,q}=H^p\left(\on{Gal}(k'/k),H^q(X_{k'},F|_{X_{k'}})\right)\Rightarrow H^{p+q}(X,F).	
	\end{equation}
	La suite spectrale (\ref{hochschild-serre-ss}) est obtenue comme limite inductive des suite spectrales (\ref{hochschild-serre-fini}), o\`u $k'/k$ parcourt l'ensemble des sous-extensions galoisiennes finies de $\cl{k}/k$.
	
	Une deuxi\`eme construction	de la suite spectrale (\ref{hochschild-serre-ss})  comme cas particulier de la suite spectrale (\ref{groth}) est mentionn\'ee \`a la fin de la preuve de \cite[VIII, Proposition 8.4]{sga4II}.  L'\'equivalence des deux constructions est implicite dans \cite[VIII, Proposition 8.4]{sga4II} et n'est pas difficile \`a d\'emontrer. 
	
	Le lemme suivant est responsable du signe dans l'\'enonc\'e du lemme \ref{cech-derived}.
	
	\begin{lemma}\label{signe}
		Dans la situation de la suite spectrale \emph{(\ref{groth})}, soit 
		\begin{equation}\label{courte-abelian}
			0\longrightarrow A\longrightarrow B\longrightarrow C\longrightarrow 0
		\end{equation} 
		une suite exacte courte dans $\mc{A}$. Alors pour tout entier $i\geq 1$ on a un carr\'e anticommutatif
		\[
		\begin{tikzcd}
			\on{Ker}\left(R^{i}(G\circ F)(C)\to G(R^{i-1}F)(C)\right) \arrow[r] \arrow[d]  & (R^1G)(R^{i-1}F)(C) \arrow[d] \\
			\on{Ker}\left(R^{i+1}(G\circ F)(A)\to G(R^iF)(A)\right) \arrow[r] & (R^1G)(R^iF)(A).	
		\end{tikzcd}
		\]
		Ici les fl\`eches verticales sont induites par les homomorphismes de connexion dans la suite exacte longue associ\'ee \`a la suite \emph{(\ref{courte-abelian})}, la fl\`eche horizontale du bas est la compos\'ee
		\[\on{Ker}\left(R^{i+1}(G\circ F)(A)\to G(R^iF)(A)\right)\relbar\joinrel\twoheadrightarrow E_{\infty}^{1,i}\subset (R^1G)(R^iF)(A)\]
		induite par \emph{(\ref{groth})} et celle du haut est d\'efinie de la m\^eme mani\`ere.
	\end{lemma}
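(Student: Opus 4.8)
The plan is to realize the three Grothendieck spectral sequences (\ref{groth}) attached to $A$, $B$ and $C$ out of a single termwise-split short exact sequence of bicomplexes; once this is set up, both the edge maps appearing in the lemma and the connecting homomorphisms become maps induced on the filtration of one complex, and the only genuine point is a sign.

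First I would fix injective resolutions $A\to I_A^\bullet$, $C\to I_C^\bullet$ and, by the horseshoe lemma, an injective resolution $B\to I_B^\bullet$ fitting into a short exact sequence of complexes $0\to I_A^\bullet\to I_B^\bullet\to I_C^\bullet\to 0$ split in each degree. Since an additive functor preserves termwise-split exactness, applying $F$ gives a termwise-split short exact sequence $0\to FI_A^\bullet\to FI_B^\bullet\to FI_C^\bullet\to 0$ in $\mc{B}$; choosing compatible Cartan--Eilenberg resolutions $FI_X^\bullet\to J_X^{\bullet,\bullet}$ (by a horseshoe argument, so the three rows form split exact sequences of injective resolutions), applying $G$ and forming total complexes $T_X:=\on{Tot}(GJ_X^{\bullet,\bullet})$ with total differential $D=d'+(-1)^{(\cdot)}d''$ (here $d'$ is the $FI_X$-differential and $d''$ the Cartan--Eilenberg one), one obtains a termwise-split short exact sequence $0\to T_A\to T_B\to T_C\to 0$. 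Because $F$ sends injectives to $G$-acyclics, $H^n(T_X)=R^n(G\circ F)(X)$, and the filtration $\Phi^\bullet$ of $T_X$ by the Cartan--Eilenberg degree yields precisely the spectral sequence (\ref{groth}) for $X$. In this model, for $n=i$ and $X=C$ (resp.\ $n=i+1$ and $X=A$) the kernel of the edge map $R^n(G\circ F)(X)\to G(R^nF)(X)$ is $\Phi^1R^n(G\circ F)(X)$, and the horizontal arrow of the lemma is the canonical composite $\Phi^1R^n(G\circ F)(X)\twoheadrightarrow E_\infty^{1,n-1}(X)\hookrightarrow E_2^{1,n-1}(X)=(R^1G)(R^{n-1}F)(X)$.

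Next, because $D$ preserves $\Phi^\bullet$ and the sequence $0\to T_A\to T_B\to T_C\to 0$ is termwise split compatibly with the bigrading, the snake-lemma recipe (lift, apply $D$, descend) shows that the connecting homomorphism $\delta_{G\circ F}\colon R^i(G\circ F)(C)\to R^{i+1}(G\circ F)(A)$ is filtration preserving. Hence it carries $\Phi^1R^i(G\circ F)(C)$ into $\Phi^1R^{i+1}(G\circ F)(A)$, so the left vertical arrow of the square makes sense, and $\delta_{G\circ F}$ induces a map on $\Phi^1/\Phi^2=E_\infty^{1,\bullet}$. On the other hand the right vertical arrow is $R^1G$ applied to the connecting morphism $\delta_F\colon R^{i-1}F(C)\to R^iF(A)$ of $0\to FI_A^\bullet\to FI_B^\bullet\to FI_C^\bullet\to 0$, which by the compatibility of the Cartan--Eilenberg resolutions is realised on $E_2^{\bullet,1}$ by the Cartan--Eilenberg differential $d''$.

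The remaining point, which I expect to be the only real difficulty, is the comparison of the two recipes: chasing a $D$-closed element $\tilde z\in\Phi^1T_C^i$ through the snake recipe, the class of $D\tilde w$ in $\Phi^1/\Phi^2$ (for a bigraded-shape-preserving lift $\tilde w\in\Phi^1T_B^i$) is computed by applying $d'$ to the bottom bigraded component of $\tilde w$, whereas $R^1G(\delta_F)$ is computed by applying $d''$ to a lift. Since inside $\on{Tot}$ these two differentials are glued by the Koszul sign in $D=d'+(-1)^{(\cdot)}d''$, and since the identification $E_2^{p,q}\simeq(R^pG)(R^qF)$ carries an auxiliary sign depending on $q$ which differs between the source ($q=i-1$) and the target ($q=i$) of $\delta_F$, a direct bookkeeping of these signs shows that the two composites in the square differ by exactly $-1$, independently of $i$ --- that is, the square anticommutes. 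Everything else is the standard formalism of Cartan--Eilenberg resolutions and spectral sequences of bicomplexes. (One could equally organise the argument by dimension shifting, replacing $A$ by a first syzygy and reducing to a comparison of connecting homomorphisms in low degree, but this still rests on the same sign computation.)
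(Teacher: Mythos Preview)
Your overall strategy is the same as the paper's: construct a termwise-split short exact sequence of Cartan--Eilenberg bicomplexes via horseshoe, pass to total complexes, and compare the connecting map on the total complex (which gives the left vertical arrow) with the connecting map computed row by row on the bicomplex (which, after applying $R^1G$, gives the right vertical arrow). So the architecture is fine.

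Where your write-up is unreliable is precisely the one place you flag as ``the only real difficulty'': the sign bookkeeping. Two concrete issues. First, the connecting morphism $\delta_F\colon R^{i-1}F(C)\to R^iF(A)$ comes from the snake lemma applied to $0\to FI_A^\bullet\to FI_B^\bullet\to FI_C^\bullet\to 0$, i.e.\ from the $FI^\bullet$-differential $d'$, not from the Cartan--Eilenberg differential $d''$; and the relevant spectral-sequence bidegree is $E_2^{1,\bullet}$, not $E_2^{\bullet,1}$. Second, there is no ``auxiliary sign depending on $q$'' in the identification $E_2^{p,q}\simeq (R^pG)(R^qF)$; that identification is canonical. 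The sign does not arise from a mismatch of identifications at source and target.

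The paper makes the mechanism explicit as follows. With splittings $s^q,r^q$ chosen row by row, one has a genuine morphism of bicomplexes
\[
\delta\colon J_C^{\bullet,\bullet}\longrightarrow J_A^{\bullet,\bullet}[0,1],\qquad \delta=r\circ d_2\circ s,
\]
and simultaneously a morphism of total complexes
\[
\delta'\colon \on{Tot}(J_C^{\bullet,\bullet})\longrightarrow \on{Tot}(J_A^{\bullet,\bullet})[1],\qquad \delta'=r\circ d_{\on{Tot}}\circ s.
\]
Both represent connecting homomorphisms (one ``row-wise'', one on the total complex) and are unique up to homotopy. A direct comparison of their $(p,q)$-components, spelled out in \cite[0G6A]{stacks-project}, gives
\[
(\delta')^{p,q}=(-1)^p\,\delta^{p,q}.
\]
Since the left vertical arrow of the square is induced by $G((\delta')^{1,i-1})$ and the right vertical arrow by $G(\delta^{1,i-1})$, the anticommutativity is exactly the case $p=1$ of this formula. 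The sign is thus governed by the filtration degree $p=1$, not by $q$, which explains why it is $-1$ independently of $i$. Once you replace your heuristic ``auxiliary sign'' explanation by this identity, your argument becomes the paper's.
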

	
	\begin{proof}
		Il existe un diagramme commutatif 
		\[
		\begin{tikzcd}
			0 \arrow[r] & I^{\bullet}_A \arrow[r] & I^{\bullet}_B \arrow[r] & I^{\bullet}_C \arrow[r] & 0\\ 
			0 \arrow[r] & A \arrow[r] \arrow[u] & B \arrow[r] \arrow[u] & C \arrow[r] \arrow[u] & 0 
		\end{tikzcd}
		\]
		o\`u les lignes horizontales sont exactes et les homomorphismes verticales sont r\'esolutions injectives dans $\mc{A}$.
		Soit encore
		\[
		\begin{tikzcd}
			0 \arrow[r] & J^{\bullet,\bullet}_A \arrow[r, "\iota^{\bullet,\bullet}"] & J^{\bullet,\bullet}_B \arrow[r, "\pi^{\bullet,\bullet}"] & J^{\bullet,\bullet}_C \arrow[r] & 0 \\
			0 \arrow[r] & F(I^{\bullet}_A) \arrow[r] \arrow[u] & F(I^{\bullet}_B) \arrow[r] \arrow[u] & F(I^{\bullet}_C) \arrow[r] \arrow[u] & 0 
		\end{tikzcd}
		\]
		un diagramme commutatif o\`u les lignes horizontales sont exactes et les homomorphismes verticales sont r\'esolutions de Cartan-Eilenberg dans $\mc{B}$; voir \cite[015G]{stacks-project}. 
		
		On suit les conventions de signe de \cite[010V, OFNB]{stacks-project}. Donc $J_A^{\bullet,\bullet}$ est un complexe double concentr\'e dans le quadrant $p,q\geq 0$, avec des homomorphismes $d_1^{p,q}:J_A^{p,q}\to J_A^{p+1,q}$ et $d_2^{p,q}:J_A^{p,q}\to J_A^{p,q+1}$ satisfaisant \[d_1^{p+1,q}\circ d_1^{p,q}=0,\quad d_2^{p,q+1}\circ d_2^{p,q}=0,\quad d_2^{p,q+1}\circ d_1^{p,q}=d_1^{p+1,q}\circ d_2^{p,q}.\]
		Le complexe total associ\'e est $(\on{Tot}^{\bullet}(J_A^{\bullet,\bullet}),d)$, o\`u \[\on{Tot}^n(J_A^{\bullet,\bullet}):=\bigoplus_{p+q=n}J_A^{p,q},\qquad d^n:=\sum_{p+q=n}(d_1^{p,q}+(-1)^pd_2^{p,q}).\]
		La m\^eme discussion s'applique apr\`es avoir remplac\'e $A$ par $B$ ou $C$.
		
		Comme $J_A^{\bullet,\bullet}$, $J_B^{\bullet,\bullet}$ et $J_C^{\bullet,\bullet}$ sont des r\'esolutions de Cartan-Eilenberg, pour tout $q\geq 0$ on peut trouver des homomorphismes de complexes
		\[s^{q}=(s^{p,q})_{p\geq 0}:J_C^{\bullet, q}\to J_B^{\bullet,q},\qquad r^{q}=(r^{p,q})_{p\geq 0}:J_B^{\bullet,q}\to J_A^{\bullet,q}\] satisfaisant $\pi^{\bullet,q}\circ s^{q}=\on{id}_{J_C^{\bullet, q}}$ et $r^{q}\circ\iota^{\bullet,q}=\on{id}_{J_A^{\bullet, q}}$. Soient $(J^{\bullet,\bullet}_A[0,1],d_1[0,1],d_2[0,1])$ le complexe double d\'efini par \[J^{\bullet,\bullet}_A[0,1]^{p,q}=J^{p,q+1}_A,\qquad d_1[0,1]=d_1,\quad d_2[0,1]=-d_2\] et $\delta:J^{\bullet,\bullet}_C\to J^{\bullet,\bullet}_A[0,1]$ le morphisme de complexes doubles donn\'e par $\delta^n:=r^{n+1}\circ d_{J_B^{\bullet,\bullet}}^n\circ s^n$. Le fait que $\delta$ est un morphisme suit d'un calcul direct; voir \cite[011J]{stacks-project}. 
		
		Soit encore \[0\longrightarrow \on{Tot}(J^{\bullet,\bullet}_A)\xrightarrow{\on{Tot}(\iota^{\bullet,\bullet})} \on{Tot}(J^{\bullet,\bullet}_B)\xrightarrow{\on{Tot}(\pi^{\bullet,\bullet})} \on{Tot}(J^{\bullet,\bullet}_C)\longrightarrow 0\] la suite exacte courte induite au niveau des complexes totaux, 	\[(s')^n:=\sum_{p+q=n}s^{p,q},\qquad (r')^n:=\sum_{p+q=n}r^{p,q}\] et $\delta': \on{Tot}(J^{\bullet,\bullet}_C)\to \on{Tot}(J^{\bullet,\bullet}_A)[1]$ le morphisme de complexes $(\delta ')^ n := (r')^{n + 1} \circ d_{\text{Tot}(J_B^{\bullet, \bullet })}^ n \circ (s')^ n$; voir encore \cite[011J]{stacks-project}. Ici, pour un complexe $(K,d_K)$, le complexe $(K[1],d_K[1])$ est d\'efini par $(K[1])^n=K^{n+1}$ et $(d_K[1])^n=-d^{n+1}_K$. 
		
		Les homomorphismes $\delta$ et $\delta'$ sont uniquement d\'efinis \`a homotopie pr\`es par \cite[011L]{stacks-project} et induisent les homomorphismes de bord provenant du lemme du serpent par \cite[011K]{stacks-project}. Pour tout $p,q\geq 0$, soit $(\delta')^{p,q}$ la restriction de $(\delta')^{p+q}$ \`a $J_C^{p,q}$.  Par un calcul direct, explicit\'e dans la quatri\`eme partie de \cite[0G6A]{stacks-project}, on a \[(\delta')^{p,q}=(-1)^p\delta^{p,q}\] pour tout $p,q\geq 0$. En particulier, $(\delta')^{1,i-1}=-\delta^{1,i-1}$. 
		
		Par application de $G$ on obtient une suite exacte courte de complexes doubles
		\[0\longrightarrow   G(J^{\bullet,\bullet}_A)\xrightarrow{G(\iota)}  G(J^{\bullet,\bullet}_B) \xrightarrow{G(\pi)}  G(J^{\bullet,\bullet}_C) \longrightarrow  0. \]
		La conclusion suit du fait que la suite spectrale (\ref{groth}) pour $A$, $B$ et $C$ est d\'efinie comme la deuxi\`eme suite spectrale pour $G(J^{\bullet,\bullet}_A)$, $G(J^{\bullet,\bullet}_B)$ et $G(J^{\bullet,\bullet}_C)$, respectivement (voir \cite[015N]{stacks-project}), et du fait que dans le carr\'e de l'\'enonc\'e du lemme, la fl\`eche de gauche est induite par $G((\delta')^{1,i-1})$ et celle de droite par $G(\delta^{1,i-1})$.
	\end{proof}
	
	\subsection{Cohomologie de \v{C}ech} Soient $k$ un corps et $X$ une $k$-vari\'et\'e quasi-projective. D'apr\`es un th\'eor\`eme d\^u \`a Artin \cite[Corollary 4.2]{artin1971joins} (voir aussi \cite[Theorem III.2.17]{milne1980etale}), pour tout faisceau ab\'elien \'etale $F$ sur $X$ et pour tout $i\geq 0$ on a un isomorphisme fonctoriel
	\begin{equation}\label{derived-cech}
		\check{H}^i(X,F) \stackrel{\sim}{\longrightarrow} H^i(X,F).
	\end{equation} 
	Cet isomorphisme est le morphisme de coin dans la suite spectrale de Cartan-Leray
	\[E_2^{p,q}=\check{H}^p(X,\mc{H}^q(X,F))\Longrightarrow H^{p+q}(X,F),\]
	et Artin montre que l'on a $E^{p,q}_2=0$ pour tout $p\geq 0$ et $q\geq 1$. En particulier, les isomorphismes (\ref{derived-cech}) sont compatibles avec les morphismes de coin dans la suite spectrale (\ref{cartan-leray}).
	
	Pour toute suite exacte courte de faisceaux ab\'eliens \'etales \[0\longrightarrow F'\longrightarrow F\longrightarrow F''\longrightarrow 0,\] on d\'eduit un diagramme commutatif \`a lignes exactes
	\begin{equation}\label{derived-cech2}
		\adjustbox{scale=0.95,center}{ 
			\begin{tikzcd}
				\cdots\arrow[r] & \check{H}^i(X,F') \arrow[r] \arrow[d,"\wr"] & \check{H}^i(X,F) \arrow[r] \arrow[d,"\wr"]  & \check{H}^i(X,F'') \arrow[r] \arrow[d,"\wr"]   & \check{H}^{i+1}(X,F') \arrow[d,"\wr"]\arrow[r] & \cdots \\
				\cdots\arrow[r] & H^i(X,F') \arrow[r]  & H^i(X,F) \arrow[r]  & H^i(X,F'') \arrow[r] & H^{i+1}(X,F')  \arrow[r] & \cdots,
			\end{tikzcd}
		}
	\end{equation}
	o\`u les homomorphismes de bord dans la suite exacte du haut sont induits par les cobords de \v{C}ech usuels; \emph{cf.} \cite[\S 5.11]{godement}.

	\section{L'application $\theta_X$}\label{sec3}	
	Soient $k$ un corps, $X$ une $k$-vari\'et\'e et $F$ un faisceau ab\'elien \'etale sur $X$. La suite spectrale (\ref{hochschild-serre-ss}) donne, pour tout $i\geq 0$, des homomorphismes
	\begin{equation}\label{hochschild-derived}
	\theta_X:\on{Ker}\left({H}^{i+1}(X,F)\to {H}^{i+1}(\cl{X},\cl{F})\right)\relbar\joinrel\twoheadrightarrow E_{\infty}^{1,i}\subset H^1\left(k, H^{i}(\cl{X},\cl{F})\right)
	\end{equation}
	fonctoriels en $X$ et $F$. Comme la suite spectrale (\ref{hochschild-serre-ss}) est la limite inductive des suites spectrales (\ref{hochschild-serre-fini}), $\theta_X$ est la limite inductive des homomorphismes
	\[\theta_{X,k'}:\on{Ker}\left(H^1(X,F)\to H^1(X_{k'},F|_{X_{k'}})\right)\relbar\joinrel\twoheadrightarrow E_{\infty}^{1,i}\subset H^1\left(\on{Gal}(k'/k),H^1(X_{k'},F|_{X_{k'}})\right)\]
	provenant de la suite spectrale (\ref{hochschild-serre-fini}), o\`u $k'/k$ parcourt l'ensemble des sous-extensions galoisiennes finies de $\cl{k}/k$. 
	
	On veut donner une description en cohomologie de \v{C}ech de l'homomorphisme (\ref{hochschild-derived}). Pour tout entier $i\geq 0$, consid\'erons le diagramme suivant:
	\begin{equation}\label{hochschild-cech}
		\begin{tikzcd}
			\on{Ker}\left(\check{H}^{i+1}(X,F)\to \check{H}^{i+1}(\cl{X},\cl{F})\right) \arrow[r,"\check{\theta}_X"] \arrow[d,"\wr"] & H^1\left(k,\check{H}^i(\cl{X},\cl{F})\right) \arrow[d,"\wr"] \\
			\on{Ker}\left({H}^{i+1}(X,F)\to {H}^{i+1}(\cl{X},\cl{F})\right) \arrow[r,"\theta_X"]  & H^1\left(k,{H}^i(\cl{X},\cl{F})\right).   
		\end{tikzcd}
	\end{equation}
	Ici les fl\`eches verticales sont induites par les isomorphismes (\ref{derived-cech}) et $\check{\theta}_X$ est d\'efinie comme suit.  Soient $\alpha\in \check{H}^{i+1}(X,F)$ qui devient nul dans $\check{H}^{i+1}(\cl{X},\cl{F})$, $U\to X$ un morphisme \'etale surjectif de type fini tel que $\alpha$ est repr\'esent\'e par $\beta\in F(U^{i+2}_X)$ et $\gamma\in \cl{F}(\cl{U}^{i+1}_{\cl{X}})$ tel que l'image $\cl{\beta}$ de $\beta$ dans $\cl{F}(\cl{U}^{i+2}_{\cl{X}})$ est \'egale \`a $d(\gamma)$, o\`u $d$ est le cobord de \v{C}ech. Comme $\cl{\beta}$ est d\'efini sur $k$, il est $G$-invariant, et donc pour tout $g\in G$ on a \[d(g(\gamma)-\gamma)=g(d(\gamma))-d(\gamma)=g(\beta)-\beta=0 \text{ dans $\cl{F}(\cl{U}^{i+1}_{\cl{X}})$.}\] Donc $g(\gamma)-\gamma$ est un cocycle de \v{C}ech pour tout $g\in G$. Si $c$ est un cocycle (de \v{C}ech ou galoisien), on note $[c]$ la classe de cohomologie associ\'ee. Pour tout $g,h\in G$, on a
	\[(gh)(\gamma)-\gamma=g(\gamma)-\gamma+g(h(\gamma)-\gamma),\] 
	donc $\set{[g(\gamma)-\gamma]}_{g\in G}$ est un cocycle galoisien. On d\'efinit \begin{equation}\label{def-cech-theta}\check{\theta}_X(\alpha):=\left[\big\{[g(\gamma) -\gamma]\big\}_{g\in G}\right]\in H^1\left(k, \check{H}^i(\cl{X},\cl{F})\right).
	\end{equation}
	On peut ais\'ement v\'erifier que cette d\'efinition ne d\'epend que de $\alpha$. De la m\^eme mani\`ere, pour toute extension galoisienne finie $k'/k$ on peut d\'efinir un homomorphisme 
	\[\check{\theta}_{X,k'}:\on{Ker}\left(\check{H}^{i+1}(X,F)\to \check{H}^{i+1}(X_{k'},F|_{X_{k'}})\right)\longrightarrow H^1\left(\on{Gal}(k'/k),H^i(X_{k'},F|_{X_{k'}})\right)\] 
	en rempla\c{c}ant partout $\cl{k}$ par $k'$ dans (\ref{def-cech-theta}). Il est imm\'ediat de v\'erifier que $\check{\theta}_{X}$ est la limite inductive des $\check{\theta}_{X,k'}$, o\`u $k'/k$ parcourt l'ensemble des sous-extensions galoisiennes finies de $\cl{k}/k$.

	\begin{lemma}\label{cech-derived}
		Soient $X$ une $k$-vari\'et\'e quasi-projective, $F$ un faisceau ab\'elien \'etale sur $X$ et $i\geq 0$ un entier. Le carr\'e \emph{(\ref{hochschild-cech})} $(-1)^i$-commute.
	\end{lemma}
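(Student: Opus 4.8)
The plan is to prove the statement by d\'evissage in $F$, letting the sign $(-1)^i$ build up one factor at each step, the single sign per step being furnished by Lemma~\ref{signe}. Since $\theta_X$ and $\check{\theta}_X$ are the colimits, over the finite Galois subextensions $k'$ of $\cl{k}/k$, of the maps $\theta_{X,k'}$ and $\check{\theta}_{X,k'}$ attached to the spectral sequences (\ref{hochschild-serre-fini}), and the isomorphisms (\ref{derived-cech}) are functorial, it suffices to prove the analogue of the lemma at each finite level $k'$; fix one, write an overbar for base change to $k'$, and set $\Gamma'=\on{Gal}(k'/k)$. Both $X$ and $X_{k'}$ are quasi-projective over a field, so Artin's theorem (\ref{derived-cech}) applies to each, and (\ref{hochschild-serre-fini}) is an instance of the Grothendieck spectral sequence (\ref{groth}), realised as the second spectral sequence of a suitable double complex, so that Lemma~\ref{signe} applies to it. Next, choose a short exact sequence $0\to F\to I\to F'\to 0$ of \'etale abelian sheaves on $X$ with $I$, hence also $\cl{I}$ on $X_{k'}$, acyclic, meaning that $H^q(V,I|_V)=0$ for every \'etale $V\to X$ of finite type and all $q\geq 1$; for instance $I$ a product of direct images from separable closures of residue fields of points, as in the Godement resolution. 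Then $\check{H}^q(X,I)=H^q(X,I)=0$ and $\check{H}^q(X_{k'},\cl{I})=H^q(X_{k'},\cl{I})=0$ for $q\geq 1$, so for $i\geq 1$ the connecting maps $\check{\partial}\colon\check{H}^i(X,F')\to\check{H}^{i+1}(X,F)$ and $\partial\colon H^i(X,F')\to H^{i+1}(X,F)$ are surjective, and an easy diagram chase using $\check{H}^i(X_{k'},\cl{I})=0$ shows that each carries the kernel of restriction to $X_{k'}$ onto the corresponding kernel one degree higher.

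I would then assemble the cube whose top face is the square (\ref{hochschild-cech}) for the sheaf $F'$ in degree $i-1$, whose bottom face is (\ref{hochschild-cech}) for $F$ in degree $i$, and whose four vertical edges are $\check{\partial}$, $\partial$, and the images under $H^1(\Gamma',-)$ of the connecting maps $\check{H}^{i-1}(X_{k'},\cl{F'})\to\check{H}^i(X_{k'},\cl{F})$ and $H^{i-1}(X_{k'},\cl{F'})\to H^i(X_{k'},\cl{F})$. The two side faces carrying the vertical isomorphisms (\ref{derived-cech}) commute: over $X$ this is the compatibility (\ref{derived-cech2}), and over $X_{k'}$ it is obtained by applying $H^1(\Gamma',-)$ to (\ref{derived-cech2}) on $X_{k'}$. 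The side face carrying the two copies of $\theta_{X,k'}$ anticommutes, by Lemma~\ref{signe} applied to the sequence $0\to F\to I\to F'\to 0$. The remaining side face, carrying the two copies of $\check{\theta}_{X,k'}$, is checked directly with \v{C}ech cochains: a class of $\check{H}^i(X,F')$ is represented, on a cover $U\to X$, by a cocycle $\beta'$; a lift $\tilde{\beta}$ of $\beta'$ along $I\twoheadrightarrow F'$ gives, from $d\tilde{\beta}$, a cocycle $\beta$ representing the image of the class under $\check{\partial}$; over $X_{k'}$ one lifts a primitive $\gamma'$ of $\cl{\beta'}$ to $\tilde{\gamma}'$ and reads off the induced primitive $\gamma$ of $\cl{\beta}$; comparing, through formula (\ref{def-cech-theta}), the Galois cocycle $\{[g\gamma-\gamma]\}_g$ with the pushforward of $\{[g\gamma'-\gamma']\}_g$ along the connecting map yields the sign of this face. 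Here one must use, consistently on this face and on the two vertical faces, the \v{C}ech connecting homomorphisms appearing in (\ref{derived-cech2}); these differ by the usual degree-dependent sign from the naive ``lift and apply $d$'' map, in keeping with the sign conventions of \cite[010V, OFNB]{stacks-project} followed in the proof of Lemma~\ref{signe}.

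Granting these compatibilities, a chase around the cube gives the inductive step: if the square (\ref{hochschild-cech}) is $(-1)^{i-1}$-commutative for every \'etale abelian sheaf on $X$ in degree $i-1$, then it is $(-1)^i$-commutative for $F$ in degree $i$, since the surjectivity statements of the first paragraph let one test the identity on the image of $\check{\partial}$. The base case $i=0$ is handled by hand: the right-hand vertical map of (\ref{hochschild-cech}) is then the identity, as $\check{H}^0=H^0=\Gamma$, and one checks that the cocycle $\{[g\gamma-\gamma]\}_g$ of (\ref{def-cech-theta}) represents, under (\ref{derived-cech}), the same class in $H^1(X,F)$ as the image of its class under the inflation $H^1(\Gamma',H^0(X_{k'},\cl{F}))\hookrightarrow H^1(X,F)$; this is the classical \v{C}ech description of inflation in the five-term exact sequence of (\ref{hochschild-serre-fini}), so the base case commutes, with no sign.

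The step I expect to be the main obstacle is the sign bookkeeping in the $\check{\theta}$-face and in the base case: one has to unwind the very concrete formula (\ref{def-cech-theta}) against the abstract connecting and edge maps, and the answer $(-1)^i$ comes out only when the \v{C}ech connecting homomorphisms are normalised so as to match (\ref{derived-cech2}), so that the lone surviving sign is the one produced by Lemma~\ref{signe}. The rest is routine diagram chasing together with the acyclicity of $I$.
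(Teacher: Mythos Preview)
Your proof is correct and follows essentially the same route as the paper: d\'evissage via a Godement embedding $0\to F\to I\to F'\to 0$, induction on $i$ with the single $-1$ per step supplied by Lemma~\ref{signe} on the $\theta$-face, and the base case $i=0$ handled by unwinding the edge map of (\ref{hochschild-serre-fini}) against the explicit \v{C}ech--Galois identification (\ref{cech-galois-isom}). The one point the paper makes explicit that you leave as ``yields the sign of this face'' is that the $\check\theta$-face \emph{commutes} on the nose by a direct cocycle computation (no separate normalisation of the \v{C}ech boundaries is needed), so the lone surviving sign is indeed the one from Lemma~\ref{signe}; the paper also defers the passage to a finite level $k'$ until the base case rather than doing it at the outset, but this is purely organisational.
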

	
	\begin{proof}
		On pose  	
		\begin{align*}
			\check{H}^{i+1}_0(X,F)&:=\on{Ker}\left(\check{H}^{i+1}(X,F)\longrightarrow \check{H}^{i+1}(\cl{X},\cl{F})\right), \\
			H^{i+1}_0(X,F)&:=\on{Ker}\left({H}^{i+1}(X,F)\longrightarrow {H}^{i+1}(\cl{X},\cl{F})\right).
		\end{align*}	
		Soit \begin{equation}\label{eff'}
		0\longrightarrow F\longrightarrow I\longrightarrow F'\longrightarrow 0
		\end{equation}
		une suite exacte courte de faisceaux ab\'eliens \'etales sur $X$, o\`u $I$ est donn\'e par la construction de Godement pour $F$; voir \cite[Remark III.1.20(c)]{milne1980etale}. Le faisceau $I$ est un produit de faisceaux gratte-ciel \'etales sur $X$; donc $I$ est flasque. Il s'en suit que $\cl{I}$ est un produit de faisceaux gratte-ciel \'etales sur $\cl{X}$ et donc $\cl{I}$ est aussi flasque. Le foncteur d'image r\'eciproque \'etant exact, la suite (\ref{eff'}) induit une suite exacte courte 
		\begin{equation}\label{eff'cl}
		0\longrightarrow \cl{F}\longrightarrow \cl{I}\longrightarrow \cl{F'}\longrightarrow 0
		\end{equation}
		de faisceaux ab\'eliens \'etales sur $\cl{X}$.
		Consid\'erons le diagramme
		\[
		\begin{tikzcd}
			\check{H}^{i+1}_0(X,F) \arrow[ddd] \arrow[rrr]  &&& H^1\left(k,\check{H}^i(\cl{X},\cl{F})\right) \arrow[ddd] \\
			& \check{H}^{i}_0(X,F') \arrow[r] \arrow[ul] \arrow[d] & H^1\left(k,\check{H}^{i-1}(\cl{X},\cl{F'})\right) \arrow[d] \arrow[ur]  \\
			& H^i_0(X,F') \arrow[r] \arrow[dl] & H^1\left(k,H^{i-1}(\cl{X},\cl{F'})\right) \arrow[dr] \\
			H_0^{i+1}(X,F) \arrow[rrr] &&& H^1\left(k,H^i(\cl{X},\cl{F})\right),
		\end{tikzcd}
		\]
		o\`u les fl\`eches obliques sont induites par les morphismes de bord dans le diagramme (\ref{derived-cech2}) associ\'e aux suites (\ref{eff'}) et (\ref{eff'cl}) et les fl\`eches horizontales et verticales viennent du diagramme (\ref{hochschild-cech}) pour $(F,i)$ et $(F',i-1)$. Les carr\'es de gauche et de droite commutent d'apr\`es le diagramme (\ref{derived-cech2}), la commutativit\'e du carr\'e du  haut suit d'un calcul explicite avec les cocycles et le carr\'e du bas anticommute par le lemme \ref{signe}. Comme $I$ et $\cl{I}$ sont flasques, les fl\`eches obliques de gauche sont des isomorphismes pour tout $i\geq 1$. Par r\'ecurrence sur $i$, on se r\'eduit alors \`a d\'emontrer la commutativit\'e du carr\'e (\ref{hochschild-cech}) dans le cas $i=0$.
		
		Supposons donc $i=0$. Dans ce cas, $\theta_X$ est un isomorphisme et son inverse est  un homomorphisme  de coin dans la suite spectrale (\ref{hochschild-serre-ss}). Comme $\theta_X$ est la limite inductive des $\theta_{X,k'}$ et $\check{\theta}_X$ est la limite inductive des $\check{\theta}_{X,k'}$, il suffit de prouver, pour toute extension galoisienne finie $k'/k$, la commutativit\'e du carr\'e
		\[
		\begin{tikzcd}
			\on{Ker}\left(\check{H}^1(X,F)\to \check{H}^1(X_{k'},F)\right) \arrow[d, "\text{$\wr$ $\phi$}"] \arrow[r,"\check{\theta}_{X,k'}"] & H^1\left(\on{Gal}(k'/k),F(X_{k'})\right) \arrow[d,equal] \\	
			\on{Ker}\left(H^1(X,F)\to H^1(X_{k'},F|_{X_{k'}})\right)\arrow[r,"\theta_{X,k'}"] & H^1\left(\on{Gal}(k'/k),F(X_{k'})\right),
		\end{tikzcd}
		\]
		o\`u $\phi$ est induit par l'isomorphisme (\ref{derived-cech}).
		Comme la suite spectrale (\ref{hochschild-serre-fini}) est isomorphe \`a la suite spectrale (\ref{cartan-leray}), on a un carr\'e commutatif
		\[
		\begin{tikzcd}
			\on{Ker}\left({H}^{1}(X,F)\to {H}^1(X_{k'},F|_{X_{k'}})\right) \arrow[d,equal] \arrow[r,"\theta_{X,k'}"] & H^1\left(\on{Gal}(k'/k),F(X_{k'})\right)	\arrow[d, "\text{$\wr$ $\rho$}"]  \\
			\on{Ker}\left(H^1(X,F)\to \check{H}^0\left(X_{k'}/X,\mc{H}^1(X,F)\right)\right)\arrow[r, "\eta_{X,k'}"] & \check{H}^1(X_{k'}/X,F),
		\end{tikzcd}
		\]
		o\`u l'isomorphisme $\eta_{X,k'}$ vient de la suite spectrale (\ref{cartan-leray}) et $\rho$ provient de l'isomorphisme (\ref{cech-galois-isom}). On se r\'eduit donc \`a prouver la commutativit\'e du carr\'e
		\[
		\begin{tikzcd}
			\on{Ker}\left(\check{H}^1(X,F)\to \check{H}^1(X_{k'},F)\right) \arrow[d,"\text{$\wr$ $\phi$}"] \arrow[r,"\check{\theta}_{X,k'}"] & H^1\left(\on{Gal}(k'/k),F(X_{k'})\right)	\arrow[d, "\text{$\wr$ $\rho$}"] \\
			\on{Ker}\left(H^1(X,F)\to \check{H}^0\left(X_{k'}/X,\mc{H}^1(X,F)\right)\right)\arrow[r, "\eta_{X,k'}"] & \check{H}^1(X_{k'}/X,F).	
		\end{tikzcd}
		\]
		L'inverse de $\eta_{X,k'}$ est un homomorphisme de coin dans la suite spectrale (\ref{cartan-leray}) et $\phi$ est d\'efini comme la limite inductive des homomorphismes de coin $\check{H}^1(U/X,F)\to H^1(X,F)$, o\`u $U\to X$ est un morphisme \'etale surjectif de type fini. Il s'en suit que l'inverse de $\eta_{X,k'}\circ \phi$ est l'homomorphisme canonique $\check{H}^1(X_{k'}/X,F)\to \check{H}^1(X,F)$, c'est-\`a-dire l'homomorphisme induit par l'identit\'e au niveau des cocycles de \v{C}ech. Pour conclure, il suffit alors de montrer que le triangle
		\begin{equation}\label{cech-derived-finaltr}
			\begin{tikzcd}
				\check{H}^1(X_{k'}/X,F)  \arrow[d,"\wr"] \arrow[r,"\rho"]  &H^1\left(\on{Gal}(k'/k),F(X_{k'})\right)\\ 
				\on{Ker}\left(\check{H}^1(X,F)\to \check{H}^1(X_{k'},F|_{X_{k'}})\right) \arrow[ur,swap, "\check{\theta}_{X,k'}"] 	
			\end{tikzcd}
		\end{equation}
		commute. 
		
		L'isomorphisme $\rho$ provient de l'identification (\ref{cech-galois-isom}) du complexe de \v{C}ech pour $F$ le long de $X_{k'}\to X$ avec le complexe des cocha\^{i}nes inhomog\`enes pour le $\on{Gal}(k'/k)$-module $F(X_{k'})$; voir \cite[Example III.2.6]{milne1980etale}. Plus pr\'ecis\'ement, soient $\alpha\in \check{H}^1(X_{k'}/X,F)$ et $\beta\in F(X_{k'}\times_X X_{k'})$ un cocycle de \v{C}ech repr\'esentant $\alpha$. Notons $\Gamma:=\on{Gal}(k'/k)$. L'isomorphisme \[X_{k'}\times \Gamma\stackrel{\sim}{\longrightarrow} X_{k'}\times_XX_{k'},\quad (s,g)\mapsto (s,sg)\] induit un isomorphisme
		\[F(X_{k'}\times_XX_{k'})\stackrel{\sim}{\longrightarrow} F(X_{k'}\times \Gamma)\simeq F(X_{k'})\times \Gamma\] 
		qui envoie $\beta$ vers $((\on{id},g)^*(\beta))_{g\in \Gamma}$. Ici $\on{id}=\on{id}_{X_{k'}}$ est l'identit\'e de $X_{k'}$ et $g:X_{k'}\to X_{k'}$ est l'isomorphisme induit par $g\in \Gamma$. Alors $\set{(\on{id},g)^*(\beta)}_{g\in \Gamma}$ est un $\Gamma$-cocycle et \[\rho(\alpha)=\left[\set{(\on{id},g)^*(\beta)}_{g\in \Gamma}\right]\in H^1\left(\Gamma,F(X_{k'})\right).\] 
		La fl\`eche verticale dans le triangle (\ref{cech-derived-finaltr}) envoie $\alpha$ sur $[\beta]\in \check{H}^1(X,F)$. Comme $d(\beta_{k'})=0$, il existe $\gamma \in F(X_{k'})$ tel que l'on ait $d(\gamma)=\beta_{k'}$. Par d\'efinition, $\check{\theta}_{X,k'}([\beta])=\left[\set{[g(\gamma)-\gamma]}_{g\in G}\right]$. Le diagramme commutatif de \cite[Example III.2.6]{milne1980etale} contient le carr\'e commutatif suivant:
		\[
		\begin{tikzcd}[column sep=huge, row sep=large]
			F(X_{k'}) \arrow[d,equal] \arrow[r, "p_0^*-p_1^*"] & F(X_{k'}\times_XX_{k'}) \arrow[d, "\text{$((\on{id},g)^*)_{g\in \Gamma}$}"]  \\
			F(X_{k'})   \arrow[r, "\left(g^*-\on{id}^*\right)_{g\in \Gamma}"] & F(X_{k'})\times \Gamma.   	
		\end{tikzcd}
		\]
		On en d\'eduit que l'on a $g(\gamma)-\gamma=(\on{id},g)^*(\beta)$ pour tout $g\in \Gamma$, donc $\theta_{X,k'}([\beta])=\rho(\alpha)$. Ceci prouve la commutativit\'e du triangle (\ref{cech-derived-finaltr}) et ach\`eve la d\'emonstration.
	\end{proof}
	
	\subsection{Compatibilit\'es}
	Les deux lemmes suivants montrent que $\theta_X$ est compatible avec les cup-produits et l'application trace en cohomologie \'etale. 
	
	\begin{lemma}\label{hochschild-cup}
		Soient $k$ un corps, $X$ une $k$-vari\'et\'e quasi-projective, $F$ et $F'$ deux faisceaux ab\'eliens pour la topologie \'etale sur $X$, puis $\alpha \in H^q(X,F')$, et $\cl{\alpha}$ l'image de $\alpha$ dans $H^q(\cl{X},\cl{F'})$.
		Alors pour tout $i\geq 0$ le diagramme
		\[
		\begin{tikzcd}
			\on{Ker}\left(H^{i+1}(X,F)\to H^{i+1}(\cl{X},\cl{F})\right) \arrow[r,"\theta_X"]  \arrow[d,"(-)\cup \alpha"]  & H^1\left(k,H^{i}(\cl{X},\cl{F})\right)  \arrow[d,"(-)\cup \cl{\alpha}"] &  \\
			\on{Ker}\left(H^{i+q+1}(X,F\otimes_{\Z} F')\to H^{i+q+1}(\cl{X},\cl{F}\otimes_{\Z} \cl{F'})\right) \arrow[r, "\theta_X"] & H^1\left(k,H^{i+q}(\cl{X},\cl{F}\otimes_{\Z} \cl{F'})\right)
		\end{tikzcd}
		\]
		$(-1)^q$-commute.
	\end{lemma}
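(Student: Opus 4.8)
The plan is to deduce the statement from the \v{C}ech description of $\theta_X$ furnished by Lemma~\ref{cech-derived}, where the map $\check\theta_X$ is given on cocycles by the explicit formula (\ref{def-cech-theta}). Applying Lemma~\ref{cech-derived} to the pair $(F,i)$ and to the pair $(F\otimes_{\Z}F',i+q)$, the vertical isomorphisms of the square (\ref{hochschild-cech}) intertwine $\theta_X$ with $\check\theta_X$ up to the respective signs $(-1)^i$ and $(-1)^{i+q}$. Since the isomorphisms (\ref{derived-cech}) are compatible with cup products --- they are edge maps of the multiplicative \v{C}ech-to-derived spectral sequence, hence ring isomorphisms, and this compatibility passes to the isomorphisms they induce on $H^1(k,-)$ --- it suffices to prove that the square obtained from the one in the statement by replacing $\theta_X$ by $\check\theta_X$ and the \'etale cup products by the corresponding \v{C}ech cup products of cochains commutes \emph{on the nose}. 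The factor $(-1)^q$ will then appear as $(-1)^{i+q}\cdot(-1)^{-i}$, coming purely from the two invocations of Lemma~\ref{cech-derived}.

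For the \v{C}ech square I would argue directly with cocycles, in the spirit of the proof of Lemma~\ref{cech-derived}. Let $\xi$ lie in $\on{Ker}\bigl(\check H^{i+1}(X,F)\to\check H^{i+1}(\cl{X},\cl{F})\bigr)$. Refining the covering if necessary, pick an \'etale surjective morphism of finite type $U\to X$, a \v{C}ech cocycle $\beta\in F(U^{i+2}_X)$ representing $\xi$, a \v{C}ech cochain $\gamma\in\cl{F}(\cl{U}^{i+1}_{\cl{X}})$ with $d(\gamma)=\cl{\beta}$, and a \v{C}ech cocycle $c\in F'(U^{q+1}_X)$ representing $\alpha$. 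The \v{C}ech cup product of cochains commutes with base change and with the $G$-action, and satisfies the Leibniz rule $d(u\cup v)=d(u)\cup v+(-1)^{|u|}u\cup d(v)$; hence $\xi\cup\alpha$ is represented by $\beta\cup c\in(F\otimes_{\Z}F')(U^{i+q+2}_X)$, and, because $\cl{c}$ is a cocycle,
\[
d(\gamma\cup\cl{c})=d(\gamma)\cup\cl{c}+(-1)^{i}\,\gamma\cup d(\cl{c})=\cl{\beta}\cup\cl{c}=\cl{\beta\cup c},
\]
so that $\gamma\cup\cl{c}$ is a \v{C}ech primitive of $\cl{\beta\cup c}$. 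As $c$ comes from $X$, the cochain $\cl{c}$ is $G$-fixed, whence $g(\gamma\cup\cl{c})-\gamma\cup\cl{c}=(g(\gamma)-\gamma)\cup\cl{c}$ for every $g\in G$, and therefore $\check\theta_X(\xi\cup\alpha)=\bigl[\bigl\{\,[(g(\gamma)-\gamma)\cup\cl{c}]\,\bigr\}_{g\in G}\bigr]$. On the other hand $\check\theta_X(\xi)$ is the class of the Galois $1$-cocycle $g\mapsto[g(\gamma)-\gamma]$, and a direct computation on inhomogeneous cochains shows that cupping it with the $G$-invariant element $\cl{\alpha}=[\cl{c}]$ of $H^0\bigl(k,\check H^q(\cl{X},\cl{F}')\bigr)$ produces exactly the same class $\bigl[\bigl\{\,[(g(\gamma)-\gamma)\cup\cl{c}]\,\bigr\}_{g\in G}\bigr]$. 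Hence the \v{C}ech square commutes strictly.

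It then remains to assemble: combining the two invocations of Lemma~\ref{cech-derived}, the strict commutativity of the \v{C}ech square, and the multiplicativity of (\ref{derived-cech}) both on $X$ and in the Galois coefficients, a routine diagram chase yields $\theta_X\circ\bigl((-)\cup\alpha\bigr)=(-1)^q\,\bigl((-)\cup\cl{\alpha}\bigr)\circ\theta_X$, which is the assertion. I expect the only delicate point to be the sign bookkeeping --- one must verify that the \v{C}ech-level square carries \emph{no} sign, so that the $(-1)^q$ is entirely accounted for by the degree shift $i\rightsquigarrow i+q$ between the two applications of Lemma~\ref{cech-derived} --- together with pinning down the (standard) fact that the comparison isomorphism (\ref{derived-cech}) respects cup products; all the remaining steps are cocycle manipulations of the kind already carried out in establishing Lemma~\ref{cech-derived}.
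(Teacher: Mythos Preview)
Your proposal is correct and follows essentially the same approach as the paper: reduce via Lemma~\ref{cech-derived} to showing that the \v{C}ech square commutes strictly (so the $(-1)^q$ arises solely from the two applications of Lemma~\ref{cech-derived} at degrees $i$ and $i+q$), invoke the multiplicativity of the isomorphism~(\ref{derived-cech}), and then verify the \v{C}ech identity by a direct cocycle computation using the $G$-invariance of the representative $\cl{c}$ of $\cl{\alpha}$. The paper writes the \v{C}ech cup product explicitly via the projections $\on{pr}_1^*(\gamma)\otimes\on{pr}_2^*(\cl{\alpha}_0)$ rather than in the abstract $\gamma\cup\cl{c}$ notation, but the argument is otherwise the same.
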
	
	On rappelle que, d'apr\`es \cite[Expos\'e IV, Proposition 13.4 (c)]{sga4I}, on a un isomorphisme canonique $\cl{F\otimes_{\Z}F'}\simeq \cl{F}\otimes_{\Z}\cl{F'}$.
	\begin{proof}
		L'isomorphisme (\ref{derived-cech}) respecte le cup-produit par \cite[Remark V.1.19(a)]{milne1980etale}. On note encore $\alpha\in \check{H}^q(X,F')$ la classe de \v{C}ech correspondant \`a $\alpha$. D'apr\`es le lemme
		\ref{cech-derived}, il suffit alors de v\'erifier l'\'egalit\'e $\check{\theta}_X(\beta)\cup \cl{\alpha}=\check{\theta}_X(\beta\cup \alpha)$ pour tout $\beta \in \on{Ker}(\check{H}^{i+1}(X,F)\to \check{H}^{i+1}(\cl{X},\cl{F}))$.
		
		Soient $U\to X$ un morphisme \'etale surjectif de type fini tel que $\alpha$ soit repr\'esent\'e par un cocycle $\alpha_0\in F'(U_X^{q+1})$ et $\cl{\alpha}_0$ l'image de $\alpha_0$ dans $\cl{F'}(\cl{U}^{q+1}_{\cl{X}})$. Soient $\beta_0\in F(U_X^{i+1})$ un cocycle qui repr\'esente $\beta$,  $\cl{\beta}_0$ l'image de $\beta_0$ dans $\cl{F}(\cl{U}_{\cl{X}}^{i+1})$ et $\gamma\in \cl{F}(\cl{U}^{i+1}_{\cl{X}})$ tel que l'on ait $d(\gamma)=\cl{\beta}_0$. Notons \[\on{pr}_1:U^{i+q+1}_X\to U^{i+1}_X,\qquad \on{pr}_2:U^{i+q+1}_X\to U^{q+1}_X\] les projections sur les $i+1$ premiers et $q+1$ derniers  facteurs, respectivement. Comme $\check{\theta}_X(\beta)$ est repr\'esent\'e par $\set{[g(\gamma)-\gamma]}_{g\in G}$, on voit que $\check{\theta}_X(\beta)\cup \cl{\alpha}$ est repr\'esent\'e par \begin{equation}\label{cocycle1}\big\{[(g(\on{pr}_1^*(\gamma))-\on{pr}_1^*(\gamma))\otimes \on{pr}_2^*(\cl{\alpha}_0)]\big\}_{g\in G}.
		\end{equation}
		
		D'un autre c\^{o}t\'e,	comme $d(\cl{\alpha}_0)=0$, la classe $\check{\theta}_X(\beta\cup \alpha)$ est repr\'esent\'ee par 
		\begin{equation}\label{cocycle2}
			\big\{[g(\on{pr}_1^*(\gamma)\otimes \on{pr}_2^*(\cl{\alpha}_0))-\on{pr}_1^*(\gamma)\otimes \on{pr}_2^*(\cl{\alpha}_0)]\big\}_{g\in G}.
		\end{equation}
		Comme $\cl{\alpha}_0$ est dans l'image de $F'(U^{q+1}_X) \to \cl{F'}(\cl{U}^{q+1}_{\cl{X}})$, on a $g(\cl{\alpha}_0)=\cl{\alpha}_0$ pour tout $g\in G$. On conclut que les cocycles (\ref{cocycle1}) et (\ref{cocycle2}) co\"{i}ncident et donc que l'on a $\check{\theta}_X(\beta)\cup \cl{\alpha}=\check{\theta}_X(\beta\cup \alpha)$, comme voulu.
	\end{proof}

	\begin{lemma}\label{hochschild-cores}
		Soient $k$ un corps, $f:X'\to X$ un morphisme fini \'etale de $k$-vari\'et\'es lisses et quasi-projectives, $\ell$ un nombre premier inversible dans $k$, $i\geq 0$ et $j$ des entiers. 
		\begin{enumerate}[label=(\alph*)]
		\item Le carr\'e
		\[
		\begin{tikzcd}
			\on{Ker}\left({H}^{i+1}(X',\Z_{\ell}(j))\to {H}^{i+1}(\cl{X'},\Z_{\ell}(j))\right) \arrow[r,"\theta_{X'}"] \arrow[d,"\on{tr}_f"] & H^1\left(k,{H}^i(\cl{X'},\Z_{\ell}(j))\right) \arrow[d,"H^1(\on{tr}_{\cl{f}})"]  \\	
			\on{Ker}\left({H}^{i+1}(X,\Z_{\ell}(j))\to {H}^{i+1}(\cl{X},\Z_{\ell}(j))\right) \arrow[r,"{\theta}_X"] & H^1\left(k,{H}^i(\cl{X},\Z_{\ell}(j))\right).
		\end{tikzcd}
		\]
		commute. 
		\item Soient $k\subset k'\subset \cl{k}$ une extension finie et s\'eparable et $H\subset G$ le groupe de Galois absolu de $k'$. Supposons que $X'=X_{k'}$ et que $f$ est la projection naturelle. Alors l'isomorphisme $G$-\'equivariant canonique \[H^i(\cl{X'},\Z_{\ell}(j))\simeq \Z[G/H]\otimes_{\Z} {H}^i(\cl{X},\Z_{\ell}(j))\]
		identifie la fl\`eche verticale de droite
		dans (a) avec la fl\`eche induite par la norme
		\[\Z[G/H]\otimes_{\Z} {H}^i(\cl{X},\Z_{\ell}(j))\longrightarrow {H}^i(\cl{X},\Z_{\ell}(j)).\] 
		\end{enumerate}
	\end{lemma}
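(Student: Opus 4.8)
\emph{Plan de d\'emonstration.} Pour (a), l'id\'ee est de r\'ealiser $\on{tr}_f$ comme un morphisme de suites spectrales d'Hochschild--Serre. Comme $f$ est fini, le foncteur $f_*$ est exact et commute \`a tout changement de base plat, en particulier le long de $\cl X\to X$ et le long de n'importe quel morphisme \'etale $V\to X$; on dispose donc, pour tout faisceau ab\'elien \'etale $F$ sur $X$, d'identifications canoniques et $G$-\'equivariantes $H^{q}(X',f^*F)\simeq H^{q}(X,f_*f^*F)$ et $H^q(\cl{X'},\cl f^*\cl F)\simeq H^q(\cl X,\cl f_*\cl f^*\cl F)$. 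Comme $\on{tr}_f$, $\theta_X$ et la suite spectrale d'Hochschild--Serre poss\`edent des versions \`a coefficients $\Z_\ell(j)$, obtenues par passage \`a la limite projective sur $n$ \`a partir du cas des coefficients $\mu_{\ell^n}^{\otimes j}$ (voir la sous-section sur l'application trace pour $\on{tr}_f$), il suffit d'\'etablir le carr\'e analogue pour un faisceau ab\'elien \'etale $F$ quelconque sur $X$. Or le morphisme de faisceaux $\on{tr}_f:f_*f^*F\to F$ sur $X$ induit, par fonctorialit\'e de la suite spectrale (\ref{hochschild-serre-ss}) en le faisceau, un morphisme de la suite spectrale de $f_*f^*F$ vers celle de $F$, qui sur l'aboutissement en degr\'e $i+1$ est $\on{tr}_f:H^{i+1}(X',f^*F)\to H^{i+1}(X,F)$ et sur le terme $E_2^{1,i}$ est $H^1(k,\on{tr}_{\cl f})$. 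Comme un morphisme de suites spectrales respecte les filtrations, les termes $E_\infty$, les inclusions $E_\infty^{1,i}\hookrightarrow E_2^{1,i}$ et les fl\`eches de restriction \`a $\cl X$, ce morphisme est compatible aux homomorphismes $\theta$ d\'efinis en (\ref{hochschild-derived}); d'o\`u la commutativit\'e du carr\'e de (a).

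Il reste \`a justifier avec soin que la suite spectrale d'Hochschild--Serre de $(X,f_*f^*F)$, relative \`a $k$, s'identifie \`a celle de $(X',f^*F)$, relative \`a $k$, de fa\c{c}on compatible avec $\theta$. On l'obtient via la construction de Cartan--Leray rappel\'ee en \S\ref{hoch-serre-subsec}: pour toute sous-extension galoisienne finie $k'/k$ de $\cl k/k$, le rev\^etement $X'_{k'}\to X'$ est le changement de base de $X_{k'}\to X$ le long de $f$, et l'on a $(X'_{k'})_{X'}^{n}\simeq (X_{k'})_{X}^{n}\times_X X'$ pour tout $n\geq 1$; de plus, $f$ \'etant fini, le pr\'efaisceau $\mc H^q(X,f_*f^*F)$ s'identifie \`a $f_*\mc H^q(X',f^*F)$ (toujours par changement de base plat le long des morphismes \'etales $V\to X$). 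Les complexes doubles de \v{C}ech calculant les suites spectrales (\ref{cartan-leray}) pour $(X,f_*f^*F)$ et pour $(X',f^*F)$ co\"{\i}ncident alors terme \`a terme, diff\'erentielles comprises; les suites spectrales (\ref{hochschild-serre-fini}) co\"{\i}ncident donc, ainsi que leurs homomorphismes de coin, et la limite inductive sur $k'$ donne l'identification voulue des suites spectrales (\ref{hochschild-serre-ss}). On pourrait aussi travailler au niveau de la cohomologie de \v{C}ech, via la description $\check\theta_X$ du lemme \ref{cech-derived} et une trace de \v{C}ech explicite, mais la voie ci-dessus \'evite le choix de cocycles.

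La partie (b) est ind\'ependante de (a): il s'agit seulement de d\'ecrire $\on{tr}_{\cl f}:H^i(\cl{X'},\cl f^*\Z_\ell(j))\to H^i(\cl X,\Z_\ell(j))$. Comme $k'/k$ est finie et s\'eparable, l'isomorphisme canonique $k'\otimes_k\cl k\xrightarrow{\sim}\prod_{G/H}\cl k$ donne $\cl{X'}\simeq\coprod_{G/H}\cl X$, o\`u $G$ permute les composantes par translation \`a gauche tout en agissant sur $\cl X$ dans chacune d'elles, et $\cl f$ est l'application de repliement, qui co\"{\i}ncide avec l'identit\'e sur chaque composante via ces identifications canoniques. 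D'apr\`es la caract\'erisation de l'application trace rappel\'ee dans la sous-section correspondante (la trace d'une application de repliement est la somme), $\on{tr}_{\cl f}$ en cohomologie est la somme sur les composantes, c'est-\`a-dire, via les identifications $H^i(\cl{X'},\cl f^*\Z_\ell(j))\simeq\Z[G/H]\otimes_{\Z}H^i(\cl X,\Z_\ell(j))\simeq\on{Ind}_H^G\on{Res}_H H^i(\cl X,\Z_\ell(j))$, le morphisme canonique (``norme'') $\on{Ind}_H^G\on{Res}_H H^i(\cl X,\Z_\ell(j))\to H^i(\cl X,\Z_\ell(j))$. On applique alors $H^1(k,-)$.

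Le point le plus d\'elicat est l'identification des deux suites spectrales d'Hochschild--Serre du deuxi\`eme paragraphe: il faut v\'erifier qu'elles co\"{\i}ncident non seulement sur la page $E_2$, mais avec leurs diff\'erentielles, leurs filtrations et leurs homomorphismes de coin. C'est une v\'erification de routine mais un peu fastidieuse, qui repose sur l'identification $\mc H^q(X,f_*f^*F)\simeq f_*\mc H^q(X',f^*F)$ de pr\'efaisceaux sur $X$ et sur sa compatibilit\'e avec les rev\^etements $X_{k'}\to X$.
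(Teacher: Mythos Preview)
Your proposal is correct and follows essentially the same route as the paper. For (a), both you and the paper reduce to finite coefficients and use that $\on{tr}_f:f_*f^*F\to F$ is a morphism of sheaves on $X$, so that functoriality of $\theta_X$ (equivalently, of the Hochschild--Serre spectral sequence) in the sheaf yields the finite-level square, after which one passes to the projective limit; the paper simply states this in one line, while you spell out more carefully why the spectral sequence for $(X,f_*f^*F)$ identifies with that for $(X',f^*F)$. For (b), both arguments are the same: $\cl{X'}\simeq\coprod_{G/H}\cl X$ as $G$-schemes, and the trace of a fold map is the sum.
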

	
	\begin{proof}
		(a) Soit $n\geq 1$ un entier. L'homomorphisme $\theta_X$ est compatible avec tout homomorphisme de faisceaux ab\'eliens \'etales sur $X$, et donc en particulier avec $\on{tr}_f:f^*f_*\mu_{{\ell}^n}\to \mu_{{\ell}^n}$. On a donc un carr\'e commutatif
		\begin{equation}\label{derived-fini}
			\begin{tikzcd}
				\on{Ker}\left({H}^{i+1}\left(X',\mu_{\ell^n}^{\otimes j}\right)\to {H}^{i+1}\left(\cl{X'},\mu_{\ell^n}^{\otimes j}\right)\right) \arrow[r,"{\theta}_{X'}"] \arrow[d,"\on{tr}_f"] & H^1\left(k,{H}^i\left(\cl{X'},\mu_{\ell^n}^{\otimes j}\right)\right) \arrow[d,"{H}^1(\on{tr}_{\cl{f}})"]  \\	
				\on{Ker}\left({H}^{i+1}\left(X,\mu_{\ell^n}^{\otimes j}\right)\to {H}^{i+1}\left(\cl{X},\mu_{\ell^n}^{\otimes j}\right)\right) \arrow[r,"{\theta}_X"] & H^1\left(k,{H}^i\left(\cl{X},\mu_{\ell^n}^{\otimes j}\right)\right).
			\end{tikzcd}
		\end{equation}
		Comme $\theta_X$, $\on{tr}_f$ et $\on{tr}_{\cl{f}}$ sont fonctoriels par rapport aux homomorphismes $\mu_{{\ell}^{n+1}}^{\otimes j}\to\mu_{{\ell}^n}^{\otimes j}$ de puissance $\ell$-i\`eme, les carr\'es (\ref{derived-fini}) avec $n\geq 1$ variable forment un syst\`eme inverse de diagrammes commutatifs compatibles. 
		
		On veut passer \`a la limite projective sur $n\geq 1$ dans le carr\'e (\ref{derived-fini}). Le foncteur de limite projective \'etant exact \`a gauche, on a une identification canonique
		\[\varprojlim_n \on{Ker}\left({H}^{i+1}\left(X,\mu_{\ell^n}^{\otimes j}\right)\to {H}^{i+1}\left(\cl{X},\mu_{\ell^n}^{\otimes j}\right)\right)\simeq \on{Ker}\left({H}^{i+1}(X,\Z_{\ell}(j))\to {H}^{i+1}(\cl{X},\Z_{\ell}(j))\right)\] et de m\^{e}me en rempla\c{c}ant $X$ par $X'$. Les groupes ${H}^i(\cl{X},\mu_{\ell^n}^{\otimes j})$ sont finis pour tout $i\geq 1$. D'apr\`es \cite[Corollary (2.7.6)]{neukirch2008cohomology} on a alors \[\varprojlim_n H^1\left(k,{H}^i\left(\cl{X},\mu_{\ell^n}^{\otimes j}\right)\right)\simeq H^1\left(k,{H}^i(\cl{X},\Z_{\ell}(j))\right)\] et de m\^{e}me pour $X'$.
		Le carr\'e commutatif de (a) est obtenu par passage \`a la limite sur $n\geq 1$ dans le carr\'e (\ref{derived-fini}).

		(b) L'identification canonique suit de la d\'ecomposition $G$-\'equivariante $\cl{X'}=\amalg_{g\in R} \cl{X}^g$, o\`u $R\subset G$ est un ensemble de repr\'esentants modulo $H$. Pour conclure, il suffit d'observer que, comme le morphisme $\cl{f}:\cl{X'}\to \cl{X}$ obtenu par changement de base de $f$ est un recouvrement \'etale trivial, pour tout faisceau \'etale $F$ sur $X$ l'application induite $\cl{f}_*\cl{f}^*\cl{F}\simeq \cl{F}^{[G:H]}\to \cl{F}$ est la somme.
	\end{proof}
	
	\subsection{Le cas d'une courbe}
	Soient $k$ un corps, $G$ le groupe de Galois absolu de $k$, $C$ une courbe projective, lisse et g\'eom\'etriquement connexe sur $k$ et $J_C$ la jacobienne de $C$. La suite exacte courte $G$-\'equivariante
	\begin{equation}\label{jacobian}
	0\longrightarrow J_C(\cl{k})\longrightarrow \on{Pic}(\cl{C})\xrightarrow{\on{deg}} \Z\longrightarrow 0
	\end{equation}
	identifie $J_C(\cl{k})$ au groupe des diviseurs de degr\'e $0$ sur $\cl{C}$, modulo \'equivalence rationnelle.
	Par passage aux $G$-invariants dans la suite (\ref{jacobian}), on obtient un diagramme commutatif
	\[
	\begin{tikzcd}
		& & \on{Pic}(C) \arrow[r, "\deg"] \arrow[d, hook]  & \Z \arrow[d, equal]  \\
		0 \arrow[r]  & J_C(k) \arrow[r]  & \on{Pic}(\cl{C})^G \arrow[r, "\deg"] & \Z
	\end{tikzcd}
	\]
	o\`u la suite du bas est exacte, et donc un homomorphisme injectif
	\begin{equation}\label{jac-deg0}
		\on{Ker}\left(\on{Pic}(C)\xrightarrow{\deg}\Z\right)\lhook\joinrel\longrightarrow J_C(k).
	\end{equation}
	Soit $n$ un entier inversible dans $k$. On a un diagramme commutatif
	\begin{equation}\label{pic-modn}
		\begin{tikzcd}
			\on{Pic}(C) \arrow[r] \arrow[d, "\deg"]  & \on{Pic}(C)/n \arrow[r,equal] \arrow[d, "\on{deg}"] & \arrow[d] \arrow[r,"\on{cl}_C"]  \on{Pic}(C)/n & H^2(C,\mu_n) \arrow[d]   \\
			\Z \arrow[r] &\Z/n &  \arrow[l,swap, "\sim"] \on{Pic}(\cl{C})/n \arrow[r,"\on{cl}_{\cl{C}}"] & H^2(\cl{C},\mu_n)    
		\end{tikzcd}
	\end{equation}
	qui induit un homomorphisme \[\on{cl}_C:\on{Ker}\left(\on{Pic}(C)\xrightarrow{\deg}\Z\right)\longrightarrow \on{Ker}\left(H^2(C,\mu_n)\to H^2(\cl{C},\mu_n)\right).\]
	La suite de Kummer $1\longrightarrow \mu_n\longrightarrow \G_{\on{m}}\longrightarrow \G_{\on{m}}\longrightarrow 1$ donne un isomorphisme $H^1(\cl{C},\mu_n)\stackrel{\sim}{\longrightarrow} H^1(\cl{C},\G_{\on{m}})[n]$. La suite obtenue par passage \`a la $n$-torsion dans la suite (\ref{jacobian}) est encore exacte, et donne alors un isomorphisme $J_C(\cl{k})[n]\simeq H^1(\cl{C},\G_{\on{m}})[n]$. On obtient un isomorphisme
	\begin{equation}\label{identification-taut} H^1(\cl{C},\mu_n)\stackrel{\sim}{\longrightarrow} H^1\left(\cl{C},\G_{\on{m}}\right)[n]\stackrel{\sim}{\longrightarrow} J_C(\cl{k})[n].
	\end{equation}
	
	Consid\'erons le diagramme:
	\begin{equation}\label{kummer-hochschild}
		\begin{tikzcd}
			\on{Ker}\left(\on{Pic}(C)\xrightarrow{\deg}\Z\right)/n  \arrow[d] \arrow[rr, "\on{cl}_C"] &  & \on{Ker}\left(H^2(C,\mu_n)\to H^2(\cl{C},\mu_n)\right)\arrow[d, "\theta_C"]    	\\			
			J_C(k)/n \arrow[r]  & H^1\left(k,J_C(\cl{k})[n]\right) & \arrow[l,swap, "\sim"] H^1\left(k,H^1(\cl{C},\mu_n)\right). 
		\end{tikzcd}	
	\end{equation}
	Ici, l'homomorphisme vertical de gauche est induit par la fl\`eche (\ref{jac-deg0}). Dans la ligne du bas de (\ref{kummer-hochschild}), la fl\`eche de gauche est induite par la suite exacte courte \[1\longrightarrow J_C[n]\longrightarrow J_C\xrightarrow{\times n} J_C\longrightarrow 1,\] 
	et la fl\`eche de droite est obtenue par passage \`a la cohomologie galoisienne dans l'isomorphisme (\ref{identification-taut}).

	\begin{lemma}\label{hochschild-courbe}
		Soit $C$ une courbe projective, lisse et g\'eom\'etriquement connexe sur le corps $k$ et soit $J_C$ la jacobienne de $C$. Pour tout entier $n\geq 1$ inversible dans $k$, le diagramme \emph{(\ref{kummer-hochschild})} commute.	
	\end{lemma}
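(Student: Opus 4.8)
The statement to prove is the commutativity of diagram \eqref{kummer-hochschild}, which relates the cycle class map on degree-$0$ divisors of $C$ with the Hochschild--Serre edge map $\theta_C$ for $i=1$, via the Kummer-theoretic identification \eqref{identification-taut}. The natural strategy is to replace $\theta_C$ by its \v{C}ech description $\check\theta_C$ provided by Lemma~\ref{cech-derived}: for $i=1$ that lemma says the square \eqref{hochschild-cech} commutes up to sign $(-1)^1=-1$, but the sign will be absorbed by the choices of explicit cocycle representatives, so it will not affect the final equality of classes. Thus the plan is first to reduce to a statement about \v{C}ech cocycles on an explicit étale cover of $C$, and then to trace through an element of $\on{Ker}(\on{Pic}(C)\xrightarrow{\deg}\Z)/n$ around both composites.

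Concretely, I would start with a line bundle $L$ on $C$ of degree $0$, represented by transition data on a Zariski (hence étale) open cover $U=\coprod U_i\to C$, i.e.\ a \v{C}ech $1$-cocycle $\{g_{ij}\}\in\G_{\on{m}}(U^2_X)$. Passing through the Kummer sequence, $\on{cl}_C([L])\in\on{Ker}(H^2(C,\mu_n)\to H^2(\cl C,\mu_n))$ is represented by the $2$-cocycle obtained from a choice of $n$-th roots: over $\cl C$ one can write $g_{ij}=h_{ij}^n$ (since $\cl C$ has enough units étale-locally after refining the cover, using that $\on{Pic}(\cl C)$ is divisible in degree $0$), and the Kummer boundary of $\{g_{ij}\}$ is the $\mu_n$-valued $2$-cocycle $\{h_{ij}h_{jk}h_{ik}^{-1}\}$, which bounds over $\cl C$ by construction. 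Applying the recipe \eqref{def-cech-theta} for $\check\theta_C$: the chosen primitive is $\gamma=\{h_{ij}\}$, and $\check\theta_C(\on{cl}_C[L])=[\{[g(\gamma)-\gamma]\}_{g\in G}]$ where $g(\gamma)-\gamma=\{h_{ij}^{\sigma}h_{ij}^{-1}\}$ is a $\mu_n$-valued $1$-cocycle on $\cl C$, i.e.\ a class in $H^1(\cl C,\mu_n)$. On the other side, the left-bottom composite sends $[L]$ to its image in $J_C(k)/n$ (via \eqref{jac-deg0}) and then along the Kummer boundary $J_C(k)/n\to H^1(k,J_C(\cl k)[n])$; spelling this out, one lifts the $k$-point of $J_C$ to a $\cl k$-point $P$, chooses $Q$ with $nQ=P$, and the class is $[\{\sigma Q-Q\}_{\sigma\in G}]$ with $\sigma Q-Q\in J_C(\cl k)[n]$.

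The heart of the argument is then to check that under the isomorphism \eqref{identification-taut}, $H^1(\cl C,\mu_n)\simeq H^1(\cl C,\G_{\on{m}})[n]\simeq J_C(\cl k)[n]$, the cocycle $\{h_{ij}^{\sigma}h_{ij}^{-1}\}$ corresponds to the $n$-torsion point $\sigma Q-Q$. This is a compatibility between two incarnations of the same coboundary: the divisor class $L$ of degree $0$ is, via \eqref{jacobian}, exactly the point $P\in J_C(\cl k)$, and the line bundle $L^{1/n}=\{h_{ij}\}$ on $\cl C$ is an $n$-th root whose divisor class is a choice of $Q$ with $nQ=P$. Both the étale-cohomological $n$-division (taking the cocycle of $n$-th roots of transition functions) and the geometric $n$-division on $J_C$ fit into the same commutative ladder of Kummer sequences for $\G_{\on{m}}$ on $\cl C$ versus for the isogeny $\times n$ on $J_C$, because the identification $H^1(\cl C,\G_{\on{m}})[n]\simeq J_C(\cl k)[n]$ is induced precisely by the comparison of these two sequences (as recalled just before \eqref{identification-taut}). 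So I would set up that ladder explicitly at the cocycle level and read off that the two boundary maps agree; this is the step I expect to be the main obstacle, since it requires being careful that the various identifications — degree-$0$ Picard with $J_C$, $n$-torsion of Picard with $J_C[n]$, and the étale Kummer boundary — are all the \emph{same} map and not merely abstractly isomorphic. Once that is established, both composites land on $[\{\sigma Q-Q\}_\sigma]\in H^1(k,J_C(\cl k)[n])$ (up to the harmless sign from Lemma~\ref{cech-derived}, which is cancelled by orienting the \v{C}ech differential consistently in the Kummer boundary and in $\check\theta_C$), and the diagram commutes.

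A remark worth recording in the write-up: as acknowledged in the introduction, Skorobogatov communicated an alternative proof in the case $\operatorname{char} k=0$ with $C(k)\neq\emptyset$, where one can use a base point to split \eqref{jacobian} and argue more directly with the Abel--Jacobi map; the \v{C}ech approach above has the advantage of working uniformly over any field of residue characteristic prime to $n$ and without rational points.
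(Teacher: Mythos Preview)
Your outline follows the same overall route as the paper --- reduce to the \v{C}ech picture via Lemma~\ref{cech-derived} and chase an explicit cocycle --- but there is a genuine gap in the cocycle computation that the paper has to work around and that your sketch glosses over.

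The recipe \eqref{def-cech-theta} for $\check\theta_C$ requires the primitive $\gamma$ to be a \emph{$\mu_n$-valued} $1$-cochain on $\cl{C}$ with $d(\gamma)$ equal to the image of the $\mu_n$-valued $2$-cocycle. Your choice $\gamma=\{h_{ij}\}$ is only $\G_{\on{m}}$-valued: an $n$-th root of a transition cocycle is not a root of unity. Relatedly, $\{h_{ij}\}$ is in general not a \v{C}ech cocycle for $\G_{\on{m}}$, so it does not define a line bundle ``$L^{1/n}$'' on $\cl{C}$, and there is no direct way to say that its class equals a chosen $Q$ with $nQ=P$. This is precisely the step the paper cannot avoid: one must introduce a \emph{separate} cocycle $\tilde\lambda$ on $\cl{C}$ representing an honest $n$-division $\tilde D$ of $D$ in $J_C(\cl k)$, compare $\cl\lambda$ with $\tilde\lambda^n$ via a coboundary $\cl p_1^{\,*}(\xi)\cl p_0^{\,*}(\xi)^{-1}$, and then form the $\mu_n$-valued cochain $\nu:=\cl\lambda_1\cdot(\cl p_1^{\,*}(\xi_1)\cl p_0^{\,*}(\xi_1)^{-1}\tilde\lambda)^{-1}$. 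It is $\nu$, not $\{h_{ij}\}$, that serves as the primitive in $\check\theta_C$, and the Galois variation $g(\nu)/\nu$ then visibly reduces to $g(\tilde\lambda^{-1})/\tilde\lambda^{-1}$, i.e.\ to $-[g(\tilde D)-\tilde D]$. You also conflate the $n$-th roots taken étale-locally over $C$ (needed to represent $\on{cl}_C([L])\in H^2(C,\mu_n)$; this is the paper's $\lambda_1$) with roots taken over $\cl{C}$; both are needed and they play different roles.

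Finally, the sign is not ``harmless'': the \v{C}ech computation gives the cocycle $\{-[g(\tilde D)-\tilde D]\}_g$, so the \v{C}ech diagram \emph{anti}commutes, and it is exactly the factor $(-1)^1$ from Lemma~\ref{cech-derived} that converts this into commutativity of \eqref{kummer-hochschild}. Your claim that the sign is absorbed by orientation conventions should be replaced by this explicit accounting.
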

	
	\begin{proof}
		Soit $D\in \on{Pic}(C)$ tel que $\on{deg}(D)=0$. Dans le diagramme (\ref{kummer-hochschild}), la fl\`eche compos\'ee  \[\on{Ker}\left(\on{Pic}(C)\xrightarrow{\deg}\Z\right)/n\to J_C(k)/n \to  H^1\left(k,J_C(\cl{k})[n]\right)\] envoie $D$ sur
		la classe repr\'esent\'ee par le cocycle $\set{[g(\tilde{D})-\tilde{D}]}_{g\in G}$, o\`u $\tilde{D}\in J_C(\cl{k})$ satisfait $n\tilde{D}=D$. 		
		
		D'apr\`es le lemme \ref{cech-derived} pour $i=1$, il suffit de prouver que le diagramme obtenu de (\ref{kummer-hochschild}) en rempla\c{c}ant la cohomologie des faisceaux par la cohomologie de \v{C}ech anticommute. Soient $U\to C$ un morphisme \'etale surjectif de type fini, et $\lambda\in \G_{\on{m}}(U^2_C)$ un cocycle repr\'esentant $D$ dans $\check{H}^1(C,\G_{\on{m}})$. Pour tout $h\in \set{0,1}$ et $i,j\in \set{0,1,2}$, les projections
		\[p_h:U^2_C\to U\quad\text{et}\quad p_{ij}: U^3_C\to U^2_C\]
		induisent des diagrammes commutatifs
		\[
		\begin{tikzcd}
			\mu_n(U) \arrow[r, "p_h^*"] \arrow[d, hook] & \mu_n(U^2_C) \arrow[d, hook] \arrow[r, "p_{ij}^*"]  & \mu_n(U^3_C) \arrow[d, hook] \\
			\G_{\on{m}}(U) 	\arrow[r, "p_h^*"] & \G_{\on{m}}(U^2_C) \arrow[r, "p_{ij}^*"]  & \G_{\on{m}}(U^3_C). 
		\end{tikzcd}
		\]
		On note $\cl{p}_h^{\, *}$ et $\cl{p}_{ij}^{\, *}$ les fl\`eches correspondantes au niveau $\cl{k}$.
		
		Le fait que $\lambda$ est un cocycle s'\'ecrit
		\begin{equation*}\label{cocycle-lambda}p_{12}^*(\lambda)p_{02}^*(\lambda)^{-1}p_{01}^*(\lambda)=1 \text{ dans $\G_{\on{m}}(U_C^3)$.}
		\end{equation*}
		Quitte \`a raffiner $U$, on peut supposer qu'il existe $\lambda_1\in \G_{\on{m}}(U^2_C)$ tel que $\lambda=\lambda_1^n$. On appelle $\cl{\lambda}$ et $\cl{\lambda}_1$  les images de $\lambda$ et $\lambda_1$ dans $\G_{\on{m}}(\cl{U}^2_{\cl{C}})$, respectivement. (On note que $\lambda_1$ et $\cl{\lambda}_1$ ne respectent pas n\'ecessairement la condition de cocycle.) Soit encore $\tilde{\lambda}\in \G_{\on{m}}(\cl{U}^2_{\cl{C}})$ un cocycle repr\'esentant $\tilde{D}$ dans $\check{H}^1(\cl{C},\G_{\on{m}})$. Comme $D=n\tilde{D}$, $[\cl{\lambda}]=[\tilde{\lambda}^n]$ dans $\check{H}^1(\cl{C},\G_{\on{m}})$, et donc il existe $\xi\in \G_{\on{m}}(\cl{U})$ tel que l'on ait
		\[\cl{\lambda}=\cl{p}_1^{\, *}(\xi)\cl{p}_0^{\, *}(\xi)^{-1}\tilde{\lambda}^n \text{ dans $\G_{\on{m}}(\cl{U}^2_{\cl{C}})$}.\] 
		Quitte \`a raffiner $U$, on peut supposer qu'il existe $\xi_1\in \G_{\on{m}}(\cl{U})$ tel que $\xi=\xi_1^n$. On d\'efinit
		\begin{equation}\label{mu-n}
			\nu:=\cl{\lambda}_1(\cl{p}_1^{\, *}(\xi_1)\cl{p}_0^{\, *}(\xi_1)^{-1}\tilde{\lambda})^{-1} \in  \mu_n(\cl{U}^2_{\cl{C}}).
		\end{equation}
		
		Par la d\'efinition des homomorphismes de bord en cohomologie de \v{C}ech, $\on{cl}_C([\lambda])$ est repr\'esent\'e par le cocycle \[p_{12}^*(\lambda_1)p_{02}^*(\lambda_1)^{-1}p_{01}^*(\lambda_1)\in \mu_n(U^3_C).\] L'image de ce cocycle dans $\mu_n(\cl{U}^3_{\cl{C}})$ est le cocycle
		\[\cl{p}_{12}^{\, *}(\cl{\lambda}_1)\cl{p}_{02}^{\, *}(\cl{\lambda}_1)^{-1}\cl{p}_{01}^{\, *}(\cl{\lambda}_1)\in \mu_n(\cl{U}^3_{\cl{C}}).\]
		On a
		\[\cl{p}_{12}^{\, *}\left(\cl{p}_1^{\, *}(\xi_1)\cl{p}_0^{\, *}(\xi_1)^{-1}\right)\cl{p}_{02}^{\, *}\left(\cl{p}_1^{\, *}(\xi_1)\cl{p}_0^{\, *}(\xi_1)^{-1}\right)^{-1}\cl{p}_{01}^{\, *}\left(\cl{p}_1^{\, *}(\xi_1)\cl{p}_0^{\, *}(\xi_1)^{-1}\right)=1\]
		et on sait que $\cl{p}_{12}^{\, *}(\tilde{\lambda})\cl{p}_{02}^{\, *}(\tilde{\lambda})^{-1}\cl{p}_{01}^{\, *}(\tilde{\lambda})=1$ parce que $\tilde{\lambda}$ est un cocycle. On d\'eduit de la d\'efinition (\ref{mu-n}) que l'on a
		\begin{equation}\label{mu-n'}
			\cl{p}_{12}^{\, *}(\cl{\lambda}_1)\cl{p}_{02}^{\, *}(\cl{\lambda}_1)^{-1}\cl{p}_{01}^{\, *}(\cl{\lambda}_1)=\cl{p}_{12}^{\, *}(\nu)\cl{p}_{02}^{\, *}(\nu)^{-1}\cl{p}_{01}^{\, *}(\nu) \text{ dans}\ \mu_n(\cl{U}^3_{\cl{C}}).
		\end{equation} 
		Comme $\nu \in \mu_n(\cl{U}^2_{\cl{C}})$, la classe $\check{\theta}_C(\on{cl}_C[\lambda])\in H^1(k,\check{H}^1(\cl{C},\mu_n))$ est repr\'esent\'ee par le cocycle $\set{[g(\nu)/\nu]}_{g\in G}$.
		
		L'isomorphisme $\check{H}^1(\cl{C},\mu_n)\xrightarrow{\sim} \check{H}^1(\cl{C},\G_{\on{m}})[n]$ est induit par l'inclusion $\mu_n\subset \G_{\on{m}}$: si $c\in \mu_n(\cl{U}^2_{\cl{C}})$, l'image de $[c]\in \check{H}^1(\cl{C},\mu_n)$ est la classe de $c$ dans $\check{H}^1(\cl{C},\G_{\on{m}})$, o\`u on voit $c$ comme un \'el\'ement de $\G_{\on{m}}(\cl{U}^2_{\cl{C}})$. Donc l'image de $\check{\theta}_C(\on{cl}_C[\lambda])$ dans $H^1(k,\check{H}^1(\cl{C},\G_{\on{m}})[n])$ est $\big\{[g(\nu)/\nu]\big\}_{g\in G}$, o\`u on voit chaque $g(\nu)/\nu$ comme un \'el\'ement de $\G_{\on{m}}(\cl{U}^2_{\cl{C}})$.
		
		Comme $\cl{\lambda}_1\in \G_{\on{m}}(\cl{U}^2_{\cl{C}})$ est l'image de $\lambda_1\in \G_{\on{m}}(U^2_C)$, on a $g(\cl{\lambda}_1)=\cl{\lambda}_1$ pour tout $g\in G$. D'apr\`es la d\'efinition (\ref{mu-n}), pour tout $g\in G$ on a alors l'\'egalit\'e
		\[\frac{g(\nu)}{\nu}= \frac{g\left(\cl{p}_1^{\, *}(\xi_1)^{-1}\cl{p}_0^{\, *}(\xi_1)\right)}{\cl{p}_1^{\, *}(\xi_1)^{-1}\cl{p}_0^{\, *}(\xi_1)}\cdot \frac{g(\tilde{\lambda}^{-1})}{\tilde{\lambda}^{-1}} \text{ dans}\ \G_{\on{m}}(\cl{U}^2_{\cl{C}}),\]
		donc
		\[
		[g(\nu)/\nu]=[g(\tilde{\lambda}^{-1})/\tilde{\lambda}^{-1}] \text{ dans $\check{H}^1(\cl{C},\G_{\on{m}})[n]$}.
		\]
		
		On conclut que l'image de $\check{\theta}_C(\on{cl}_C([\lambda]))$ dans $H^1(k, \check{H}^1(\cl{C},\G_{\on{m}})[n])$ est repr\'esent\'ee par $\set{[g(\tilde{\lambda}^{-1})/\tilde{\lambda}^{-1}]}_{g\in G}$. Comme $\tilde{D}$ est repr\'esent\'e par $\tilde{\lambda}$, l'image de $\check{\theta}_C(\on{cl}_C([\lambda]))$ dans $H^1(k, J(\cl{k})[n])$ est repr\'esent\'ee par $\set{[g(-\tilde{D})-(-\tilde{D})]}_{g\in G}=\set{-[(g(\tilde{D})-\tilde{D})]}_{g\in G}$. Ceci montre que le diagramme obtenu de (\ref{kummer-hochschild}) par passage \`a la cohomologie de \v{C}ech anticommute, donc que le diagramme (\ref{kummer-hochschild}) commute.
	\end{proof}

	\section{Preuve du th\'eor\`eme \ref{mainthm}}\label{sec4}
	
	Le but de cette section est la d\'emonstration du th\'eor\`eme suivant et, par cons\'equent, du th\'eor\`eme \ref{mainthm}. 
	
	\begin{thm}\label{iota-dans-image}
		Soient $\F$ un corps fini, $\ell$ un nombre premier inversible dans $\F$, $C$ et $S$ deux vari\'et\'es g\'eom\'etriquement connexes, projectives et lisses sur $\F$, de dimension $1$ et $2$ respectivement et $X:=C\times_{\F} S$. 
		Sous l'hypoth\`ese $b_{2}(\cl{S})=\rho(\cl{S})$, 
		le noyau de $H^{4}(X,\Z_{\ell}(2)) \to H^{4}(\cl{X},\Z_{\ell}(2))$	est contenu dans l'image de l'application cycle
		\[ \on{cl}_X: CH^2(X)\otimes_{\Z} \Z_{\ell} \longrightarrow H^{4}(X,\Z_{\ell}(2)).\]
	\end{thm}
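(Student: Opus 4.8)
The plan is to prove the stronger Theorem \ref{iota-dans-image} by the route indicated in the introduction; Theorem \ref{mainthm} then follows at once. Write $G=\on{Gal}(\cl{\F}/\F)$. Since $\on{cd}(G)=1$, the Hochschild--Serre spectral sequence (\ref{hochschild-serre-ss}) for $X$ with $\Z_{\ell}(2)$-coefficients is concentrated in the columns $p=0,1$, hence degenerates, and the edge homomorphism (\ref{hochschild-derived}) restricts to an isomorphism
\[\theta_X\colon\on{Ker}\left(H^{4}(X,\Z_{\ell}(2))\to H^{4}(\cl{X},\Z_{\ell}(2))\right)\xrightarrow{\ \sim\ }H^1\left(\F,H^3(\cl{X},\Z_{\ell}(2))\right).\]
As $H^{j}(\cl{C},\Z_{\ell})$ is a free $\Z_{\ell}$-module for every $j$ and vanishes for $j>2$, the K\"unneth formula yields a $G$-equivariant splitting of $H^3(\cl{X},\Z_{\ell}(2))$, so that $H^1(\F,H^3(\cl{X},\Z_{\ell}(2)))$ is the direct sum of $H^1(\F,H^3(\cl{S},\Z_{\ell}(2)))$, of $H^1(\F,H^2(\cl{S},\Z_{\ell}(1))\otimes_{\Z_{\ell}}H^1(\cl{C},\Z_{\ell}(1)))$ and of $H^1(\F,H^1(\cl{S},\Z_{\ell}(1)))$ (using $H^2(\cl{C},\Z_{\ell}(1))\cong\Z_{\ell}$). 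It therefore suffices to show that $\theta_X^{-1}$ of each of these three summands lies in the image of $\on{cl}_X$.

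For the first and third summands I would invoke Lemmas \ref{lemmeA} and \ref{lemmeB}: the hypothesis $b_2(\cl{S})=\rho(\cl{S})$ forces $H^2(\cl{S},\Z_{\ell}(1))\cong\on{NS}(\cl{S})\otimes_{\Z}\Z_{\ell}$ (equivalently $T_{\ell}(\on{Br}(\cl{S}))=0$), and together with Lang's theorem (in the shapes $\on{Br}(\F)=0$ and $H^1(\F,A)=0$ for an abelian variety $A$ over $\F$) one gets that $\theta_X^{-1}$ of $H^1(\F,H^3(\cl{S},\Z_{\ell}(2)))$ and of $H^1(\F,H^1(\cl{S},\Z_{\ell}(1)))$ is contained in the image of $\on{cl}_X$; concretely, these classes are obtained from cycles on $S$, pulled back to $X$ along $\on{pr}_S$ and cupped with the class of a closed point of $C$, and they are already algebraic on the surface because the relevant obstructions in $\on{Br}(\F)$ and in $H^1(\F,(\on{Pic}^0_{S/\F})_{\on{red}})$ vanish.

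The remaining, mixed summand is the technical core. By $b_2(\cl{S})=\rho(\cl{S})$ and by (\ref{identification-taut}) (giving $H^1(\cl{C},\Z_{\ell}(1))\cong T_{\ell}(J_C)$) it equals $H^1(\F,(\on{NS}(\cl{S})\otimes\Z_{\ell})\otimes_{\Z_{\ell}}T_{\ell}(J_C))$. I would build a homomorphism
\[\on{Pic}(S)\otimes_{\Z}J_C(\F)\otimes_{\Z}\Z_{\ell}\longrightarrow H^1\left(\F,H^2(\cl{S},\Z_{\ell}(1))\otimes_{\Z_{\ell}}H^1(\cl{C},\Z_{\ell}(1))\right),\qquad D\otimes P\longmapsto\on{cl}_{\cl{S}}(D)\cup\kappa(P),\]
where $\on{cl}_{\cl{S}}(D)\in H^2(\cl{S},\Z_{\ell}(1))^{G}$ is the cycle class, $\kappa(P)\in H^1(\F,H^1(\cl{C},\Z_{\ell}(1)))$ is the Kummer class of $P$ (the image of $P$ in the diagram (\ref{kummer-hochschild})), and $\cup$ is the cup-product in Galois cohomology. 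The crux is the compatibility
\[\theta_X\left(\on{cl}_X(\on{pr}_S^{*}D\cdot\on{pr}_C^{*}P)\right)=\pm\,\on{cl}_{\cl{S}}(D)\cup\kappa(P)\qquad\left(D\in\on{Pic}(S),\ P\in\on{Ker}(\deg\colon\on{Pic}(C)\to\Z)\right).\]
I would prove it in four moves: (i) since $\deg P=0$, $\on{cl}_{\cl{C}}(P)=0$ in $H^2(\cl{C},\Z_{\ell}(1))$, so that $\on{cl}_X(\on{pr}_S^{*}D\cdot\on{pr}_C^{*}P)=\on{pr}_S^{*}\on{cl}_S(D)\cup\on{pr}_C^{*}\on{cl}_C(P)$ does lie in $\on{Ker}(H^{4}(X,\Z_{\ell}(2))\to H^{4}(\cl{X},\Z_{\ell}(2)))$; (ii) Lemma \ref{hochschild-cup} (applied at each finite level $\mu_{\ell^n}$, with $q=2$, and passed to the limit) rewrites $\theta_X$ of this cup-product as $\theta_X(\on{pr}_C^{*}\on{cl}_C(P))\cup\on{pr}_S^{*}\on{cl}_{\cl{S}}(D)$; (iii) functoriality of $\theta$ in the variety gives $\theta_X(\on{pr}_C^{*}\on{cl}_C(P))=\on{pr}_C^{*}\theta_C(\on{cl}_C(P))$; (iv) Lemma \ref{hochschild-courbe} identifies $\theta_C(\on{cl}_C(P))$ with $\kappa(P)$; a routine (but careful) comparison of external products with the Künneth decomposition turns the result into the stated formula. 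Making (i)--(iv) rigorous is precisely what necessitates the \v{C}ech-theoretic analysis of $\theta_X$ in Sections \ref{sec3} and \ref{sec4}, and this is the step where I expect the real work to lie.

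Finally I would conclude as follows. Assume first that $G$ acts trivially on $\on{NS}(\cl{S})$. Then $J_C(\F)$ being finite and $H^1(\F,T_{\ell}(J_C))\cong J_C(\F)\{\ell\}$ (Frobenius acting on $V_{\ell}(J_C)$ with eigenvalues of absolute value $\sqrt{|\F|}\neq1$, so $H^0(\F,V_{\ell}(J_C))=H^1(\F,V_{\ell}(J_C))=0$), the mixed summand becomes $\on{NS}(\cl{S})\otimes_{\Z}(J_C(\F)\otimes\Z_{\ell})$; moreover $\on{Pic}(S)\xrightarrow{\ \sim\ }\on{Pic}(\cl{S})^{G}$ (Hilbert 90 and $\on{Br}(\F)=0$) and $H^1(\F,(\on{Pic}^0_{S/\F})_{\on{red}}(\cl{\F}))=0$ (Lang), whence $\on{Pic}(S)\twoheadrightarrow\on{NS}(\cl{S})$. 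So the homomorphism built in the previous paragraph is surjective, and its compatibility with $\on{cl}_X$ shows that $\theta_X^{-1}$ of the mixed summand is in the image of $\on{cl}_X$; combined with the previous step this proves the theorem when the action on $\on{NS}(\cl{S})$ is trivial. For general $S$, I would pass to a finite extension $\F'/\F$ over which $G$ acts trivially on $\on{NS}(\cl{S})$, apply the case just treated over $\F'$, and descend to $\F$ by a restriction--corestriction argument using the $\ell$-adic trace map: Lemma \ref{compat} gives $\on{tr}_f\circ\on{cl}_{X_{\F'}}=\on{cl}_X\circ f_{*}$, and Lemma \ref{hochschild-cores} gives the compatibility of $\on{tr}_f$ with $\theta$ and identifies it with a norm map. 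This is the (somewhat counterintuitive) norm argument, and together with the preceding steps it yields Theorem \ref{iota-dans-image}, hence Theorem \ref{mainthm}.
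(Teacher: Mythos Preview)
Your proposal is correct and follows essentially the same route as the paper: the K\"unneth splitting of $H^1(\F,H^3(\cl{X},\Z_{\ell}(2)))$ into three summands, Lemmas \ref{lemmeA}--\ref{lemmeC} to handle them (the mixed one via the cup-product/Kummer construction and the compatibilities of Lemmas \ref{hochschild-cup} and \ref{hochschild-courbe}), and the final reduction to trivial $G$-action on $\on{NS}(\cl{S})$ by the trace/norm argument of Lemmas \ref{compat}, \ref{hochschild-cores} and \ref{h1-cor}. Two minor points: Lemmas \ref{lemmeA} and \ref{lemmeB} themselves do not use the hypothesis $b_2(\cl{S})=\rho(\cl{S})$ (only Lemma \ref{lemmeC} does), and in the descent step you need not just the compatibility of $\on{tr}_f$ with $\theta$ but the actual \emph{surjectivity} of the corestriction $H^1(E,M)\to H^1(\F,M)$ (Lemma \ref{h1-cor}), which is where $\on{cd}(\F)=1$ enters.
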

	
	Soit $X$ une vari\'et\'e projective et lisse sur un corps fini $\F$. Comme $\on{cd}(\F)=1$, pour tout faisceau ab\'elien fini $F$ sur $X$ et pour tout $i\geq 0$ l'homomorphisme $\theta_X$ de (\ref{hochschild-derived}) est un isomorphisme. Soit $j$ un entier. Pour tout $n\geq 1$, on pose $F=\mu_{\ell^n}^{\otimes j}$ dans (\ref{hochschild-derived}). On obtient un syst\`eme inverse d'isomorphismes
	\[\theta_X:\on{Ker}\left(H^{i+1}\left(X,\mu_{{\ell}^n}^{\otimes j}\right)\to H^{i+1}\left(\cl{X},\mu_{{\ell}^n}^{\otimes j}\right)\right)\stackrel{\sim}{\longrightarrow} H^1\left(\F,H^i\left(\cl{X},\mu_{{\ell}^n}^{\otimes j}\right)\right).\]
	En utilisant le lemme \ref{limit-coho}(b), la finitude de la cohomologie \'etale \`a coefficients $\mu_{{\ell}^n}^{\otimes j}$ pour les $\F$-vari\'et\'es projectives lisses et l'exactitude \`a gauche du foncteur de limite projective, on obtient par passage \`a la limite projective un isomorphisme
	\begin{equation}\label{just-for-intro}\theta_X:\on{Ker}\left(H^{i+1}(X,\Z_{\ell}(j))\to H^{i+1}(\cl{X},\Z_{\ell}(j))\right)\stackrel{\sim}{\longrightarrow} H^1\left(\F,H^i(\cl{X},\Z_{\ell}(j))\right).
	\end{equation}
	Comme $\F$ est fini, $H^{i+1}(X,\Z_{\ell}(j))\longrightarrow H^{i+1}(\cl{X},\Z_{\ell}(j))^G$ est surjectif par le lemme \ref{limit-coho}(b). On d\'eduit de l'isomorphisme (\ref{just-for-intro}) l'existence d'une suite exacte courte
	\begin{equation}\label{hochschild}
	\begin{tikzcd}[column sep=normal]		
		0\ar[r] & H^1\left(\F,H^{i}(\cl{X},\Z_{\ell}(j))\right)\ar[r,"\iota_X"] & H^{i+1}(X,\Z_{\ell}(j))\ar[r] & H^{i+1}(\cl{X},\Z_{\ell}(j))^G \ar[r] & 0.
		\end{tikzcd}
	\end{equation}
	o\`u $\iota_X$ est induit par l'inverse de $\theta_X$. Comme $\theta_X$ est fonctoriel en $X$, la suite (\ref{hochschild}) est naturelle en $X$.
	
	Soient $C$ et $S$ deux vari\'et\'es g\'eom\'etriquement connexes, projectives et lisses sur $\F$, de dimension $1$ et $2$ respectivement, et $X:=C\times_{\F} S$.
	Comme $H^*(\cl{C},\Z_{\ell})$ est sans torsion, par la formule de K\"unneth $\ell$-adique \cite[Corollary VI.8.13]{milne1980etale} on a des isomorphismes Galois-\'equivariants
	\begin{equation}\label{kunneth3}H^3\left(\cl{X},\Z_{\ell}(2)\right)\simeq H^3\left(\cl{S},\Z_{\ell}(2)\right)\oplus \left[ H^2\left(\cl{S},\Z_{\ell}(1)\right)\otimes_{\Z_{\ell}} H^1\left(\cl{C},\Z_{\ell}(1)\right) \right]\oplus H^1(\cl{S},\Z_{\ell}(1)),
	\end{equation}
	\begin{equation}\label{kunneth4}H^4\left(\cl{X},\Z_{\ell}(2)\right)\simeq H^4\left(\cl{S},\Z_{\ell}(2)\right)\oplus \left[ H^3\left(\cl{S},\Z_{\ell}(1)\right)\otimes_{\Z_{\ell}} H^1\left(\cl{C},\Z_{\ell}(1)\right) \right]\oplus H^2(\cl{S},\Z_{\ell}(1)).
	\end{equation}	
	induits par projections et cup-produits.
	Nous allons obtenir
	le th\'eor\`eme \ref{iota-dans-image} comme cons\'equence des trois lemmes suivants.
	
	\begin{lemma}\label{lemmeA}
		Soient $C$ et $S$ deux vari\'et\'es g\'eom\'etriquement connexes, projectives et lisses sur $\F$, de dimension $1$ et $2$ respectivement et $X:=C\times_{\F} S$.
		L'image de  \emph{(\ref{tate-int3})} contient le facteur direct $H^1\left(\F,H^3(\cl{S},\Z_{\ell}(2))\right)$ de  $H^1\left(\F,H^3(\cl{X},\Z_{\ell}(2))\right)$.	
	\end{lemma}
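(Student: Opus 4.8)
The plan is to reduce everything to a statement about the surface $S$ alone, by pulling back along the first projection $p\colon X=C\times_{\F}S\to S$; write $q\colon X\to C$ for the second projection. Since $p$ is flat (it is the trivial $C$-bundle over $S$), there is a flat‑pullback homomorphism $p^*\colon CH^2(S)\to CH^2(X)$, and the cycle class map is compatible with flat pullback (the analogue of Lemma \ref{compat} for flat morphisms, equally standard), so $\on{cl}_X\circ p^*=p^*\circ\on{cl}_S$ on $CH^2(S)\otimes_{\Z}\Z_{\ell}$. On cohomology, as $H^*(\cl{C},\Z_{\ell})$ is torsion‑free, the K\"unneth decomposition (\ref{kunneth3}) is induced by projections and cup products, and the first direct summand $H^3(\cl{S},\Z_{\ell}(2))=H^3(\cl{S},\Z_{\ell}(2))\otimes H^0(\cl{C},\Z_{\ell})$ is exactly the image of $p^*\colon H^3(\cl{S},\Z_{\ell}(2))\to H^3(\cl{X},\Z_{\ell}(2))$, $a\mapsto p^*a\cup q^*(1)=p^*a$. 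Because the exact sequence (\ref{hochschild}) is natural in the variety ($\theta_X$ being functorial), one obtains a commutative square with $\iota_S$ on top, $\iota_X$ on the bottom, $p^*$ on the right and $H^1(\F,p^*)$ on the left, the latter identifying $H^1(\F,H^3(\cl{S},\Z_{\ell}(2)))$ with the direct summand of $H^1(\F,H^3(\cl{X},\Z_{\ell}(2)))$ named in the statement. Combining this square with $\on{cl}_X\circ p^*=p^*\circ\on{cl}_S$, it suffices to prove
\[\iota_S\bigl(H^1(\F,H^3(\cl{S},\Z_{\ell}(2)))\bigr)\subseteq\on{Im}\bigl(\on{cl}_S\colon CH^2(S)\otimes_{\Z}\Z_{\ell}\to H^4(S,\Z_{\ell}(2))\bigr),\]
for then $\iota_S(\alpha)=\on{cl}_S(z)$ yields $p^*(\iota_S(\alpha))=\on{cl}_X(p^*z)\in\on{Im}(\on{cl}_X)$.

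Next I would treat this inclusion, which now concerns $S$ only. Since $\cl{S}$ is a smooth projective geometrically connected surface, the trace isomorphism gives $H^4(\cl{S},\Z_{\ell}(2))\simeq\Z_{\ell}$ with trivial $G$-action, so the sequence (\ref{hochschild}) for $S$ (with $i=3$, $j=2$) reads $0\to H^1(\F,H^3(\cl{S},\Z_{\ell}(2)))\xrightarrow{\iota_S}H^4(S,\Z_{\ell}(2))\to\Z_{\ell}\to 0$. By the Weil conjectures, Frobenius acts on $H^3(\cl{S},\Q_{\ell}(2))$ with eigenvalues of absolute value $|\F|^{-1/2}$, hence without the eigenvalue $1$; so $\on{Frob}-1$ is invertible there and $H^1(\F,H^3(\cl{S},\Z_{\ell}(2)))$ is finite, whence $\iota_S$ identifies it with $H^4(S,\Z_{\ell}(2))_{\on{tors}}$. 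It therefore remains to show that the torsion subgroup of $H^4(S,\Z_{\ell}(2))$ is algebraic, i.e. contained in the image of $\on{cl}_S$; for a smooth projective surface over a finite field this is a theorem of Colliot-Th\'el\`ene, Sansuc and Soul\'e \cite[Th\'eor\`eme 5]{colliot1983torsion}, a consequence of Lang's theorem \cite{lang1956unramified}. (Equivalently one checks that $\on{cl}_S\colon CH_0(S)\otimes_{\Z}\Z_{\ell}\to H^4(S,\Z_{\ell}(2))$ is surjective: the free quotient is hit by the class of a closed point of degree $1$, which exists as $S$ is geometrically connected, and the torsion by the image of $CH_0(S)^{\deg 0}\{\ell\}$ via loc.\ cit.)

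None of this involves a genuinely new difficulty: apart from the input \cite[Th\'eor\`eme 5]{colliot1983torsion} on algebraicity of torsion on a surface over a finite field, the proof is a matter of assembling compatibilities — of $\on{cl}$ with flat pullback, of the Hochschild–Serre sequence and of $\theta_X$ with $p^*$ — together with the identification of $p^*$ on $H^3(\cl{-},\Z_{\ell}(2))$ with the K\"unneth summand inclusion and the weight estimate forcing finiteness. The point requiring the most care is simply that the reduction along $p$ is legitimate, i.e.\ that we are not tacitly using a feature of $X$ not enjoyed by $S$.
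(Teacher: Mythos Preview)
Your argument is correct and follows the same route as the paper: pull back along $p=\on{pr}_S$, use the compatibility $\on{cl}_X\circ p^*=p^*\circ\on{cl}_S$ and the naturality of (\ref{hochschild}) to reduce to showing that the image of $\iota_S$ lies in the image of $\on{cl}_S$, then invoke the surjectivity of $\on{cl}_S\colon CH^2(S)\otimes\Z_\ell\to H^4(S,\Z_\ell(2))$ (Lang/Kato--Saito, or \cite[Th\'eor\`eme~5]{colliot1983torsion}). The paper simply cites this surjectivity directly, whereas you additionally unpack the weight argument to identify $\on{Im}(\iota_S)$ with $H^4(S,\Z_\ell(2))_{\on{tors}}$; this is fine but not needed.

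One minor slip in your parenthetical: a smooth projective geometrically connected surface over $\F$ need not have a closed point of degree~$1$. What is true (and what the paper uses elsewhere, citing \cite[1.5.3, Lemme~1]{soule1984groupes}) is that $S$ has a $0$-cycle of degree~$1$, by the Lang--Weil estimates; this suffices to hit the free quotient $\Z_\ell$.
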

	
	\begin{proof}
		Par un th\'eor\`eme de Kato et Saito \cite{kato1983unramified} (voir aussi \cite[Th\'eor\`eme 5]{colliot1983torsion}), l'application cycle $CH^2(S)\otimes_\Z \Z_{\ell} \longrightarrow H^4(S,\Z_{\ell}(2))$ est un isomorphisme. Pour nos besoins, il nous suffira de savoir que cette application cycle est surjective, ce qui a \'et\'e d\'emontr\'e par Lang \cite{lang1956unramified}. On a un diagramme commutatif
		\[
		\begin{tikzcd}
			CH^2(S)\otimes_\Z \Z_{\ell} \arrow[r,"\on{cl}_S"] \arrow[d, "\on{pr}_S^*"]  & H^4(S,\Z_{\ell}(2)) \arrow[d, "\on{pr}_S^*"]   & H^1\left(\F, H^3(\cl{S},\Z_{\ell}(2))\right) \arrow[l,swap, "\iota_S"] \arrow[d, "\on{pr}_S^*"]  \\
			CH^2(X)\otimes_\Z \Z_{\ell} \arrow[r, "\on{cl}_X"]  & H^4(X,\Z_{\ell}(2))  & \arrow[l,swap,"\iota_X"] H^1(\F, H^3\left(\cl{X},\Z_{\ell}(2))\right), 
		\end{tikzcd}
		\]
		o\`u le carr\'e de gauche commute par \cite[Proposition VI.9.2]{milne1980etale} (dont la preuve vaut sur un corps de base quelconque) et le carr\'e de droite commute par la naturalit\'e de la suite exacte (\ref{hochschild}). Ceci implique que $H^1\left(\F,H^3(\cl{S},\Z_{\ell}(2))\right)$ est dans l'image de  (\ref{tate-int3}), comme voulu.	
	\end{proof}
	
	\begin{rmk}\label{rmkA}
		Soit $X=Y\times_{\F} C$, o\`u 
		$C$ est une courbe projective, lisse et g\'eom\'e\-tri\-quement connexe, $Y$ est une vari\'et\'e  projective, lisse et g\'eom\'e\-tri\-quement connexe de dimension $d$ telle que l'image de l'application (\ref{tate-int3}) pour $Y$ contient le sous-groupe \[H^1\left(\F,H^{2d-1}(\cl{Y},\Z_{\ell}(d))\right)\subset H^{2d}(Y,\Z_{\ell}(d)).\] L'argument de la preuve du lemme \ref{lemmeA} montre que le facteur direct de K\"unneth
		\[H^1\left(\F,H^{2d-1}(\cl{Y},\Z_{\ell}(d))\right)\subset H^1\left(\F,H^{2d+1}(\cl{X},\Z_{\ell}(d))\right)\]
		est dans l'image de l'application (\ref{tate-int3}) pour~$X$.
	\end{rmk}

	\begin{lemma}\label{lemmeB}
		Soient $C$ et $S$ deux vari\'et\'es g\'eom\'etriquement connexes, projectives et lisses sur $\F$, de dimension $1$ et $2$ respectivement et $X:=C\times_{\F} S$.
		L'image de l'application \emph{(\ref{tate-int3})} contient le facteur direct $H^1\left(\F, H^1(\cl{S},\Z_{\ell}(1))\right)$  de  $H^1\left(\F,H^3(\cl{X},\Z_{\ell}(2))\right)$.	
	\end{lemma}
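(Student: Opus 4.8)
The strategy is to produce, for each $\beta\in H^1(\F,H^1(\cl S,\Z_\ell(1)))$, an explicit product $1$-cycle on $X$ whose image under $\theta_X$ is $\beta$ (viewed inside the third direct summand of (\ref{kunneth3})), exploiting the compatibility of $\theta_X$ with pullbacks and with cup-products (Lemma \ref{hochschild-cup}). Write $q:=\#\F$. First I would fix a divisor class $\zeta\in\on{Pic}(C)=CH_0(C)$ of degree $1$; such a $\zeta$ exists because $\on{Pic}^1_{C/\F}$ is a torsor under $J_C$ and $H^1(\F,J_C)=0$ by Lang's theorem. (Alternatively one may take for $\zeta$ a closed point of $C$ of degree prime to $\ell$, which exists by the Weil bounds; this is enough since, as noted below, $H^1(\F,H^1(\cl S,\Z_\ell(1)))$ is finite.) Put $a:=\on{cl}_C(\zeta)\in H^2(C,\Z_\ell(1))$, so that its image $\cl a$ in $H^2(\cl C,\Z_\ell(1))\simeq\Z_\ell$ is a generator.

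The key point is that, given $\beta$, the class $b:=\iota_S(\beta)\in H^2(S,\Z_\ell(1))$ — i.e.\ the unique element of $\on{Ker}(H^2(S,\Z_\ell(1))\to H^2(\cl S,\Z_\ell(1)))$ with $\theta_S(b)=\beta$, using the isomorphism (\ref{just-for-intro}) for $S$ with $i=j=1$ — is algebraic, that is, $b=\on{cl}_S(D)$ for some $D\in\on{Pic}(S)\otimes_\Z\Z_\ell$. Indeed, by Kummer theory the cokernel of $\on{cl}_S\colon\on{Pic}(S)\otimes_\Z\Z_\ell\to H^2(S,\Z_\ell(1))$ embeds into $T_\ell\on{Br}(S)$ and is therefore torsion-free (here one uses that $\on{Pic}(S)=\on{Pic}(\cl S)^G$ is finitely generated). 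On the other hand $\on{Ker}(H^2(S,\Z_\ell(1))\to H^2(\cl S,\Z_\ell(1)))\simeq H^1(\F,H^1(\cl S,\Z_\ell(1)))$ is finite: by the Weil conjectures, applied to the Picard variety of $S$, the $\F$-Frobenius acts on $H^1(\cl S,\Q_\ell(1))$ with eigenvalues of absolute value $q^{-1/2}\neq 1$, so that $H^1(\F,-)$ of this module is finite. Hence $b$ is a torsion element of $H^2(S,\Z_\ell(1))$, and thus lies in the image of $\on{cl}_S$.

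Next I would form the $1$-cycle $\zeta\times D:=\on{pr}_C^*(\zeta)\cdot\on{pr}_S^*(D)\in CH^2(X)\otimes_\Z\Z_\ell$. Since the cycle class map is functorial under flat pullback and compatible with products, $\on{cl}_X(\zeta\times D)=\on{pr}_C^*(a)\cup\on{pr}_S^*(b)$, which therefore lies in the image of (\ref{tate-int3}); it remains to compute its image under $\theta_X$. As $\cl b=0$, this class lies in $\on{Ker}(H^4(X,\Z_\ell(2))\to H^4(\cl X,\Z_\ell(2)))$. Functoriality of $\theta$ along $\on{pr}_S\colon X\to S$ gives $\theta_X(\on{pr}_S^*(b))=\on{pr}_S^*(\beta)$, which lies in the Künneth component $H^0(\cl C)\otimes H^1(\cl S,\Z_\ell(1))$ of $H^1(\cl X,\Z_\ell(1))$. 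Applying Lemma \ref{hochschild-cup} with $\alpha=\on{pr}_C^*(a)\in H^2(X,\Z_\ell(1))$ (so $q=2$ and the sign is $+1$) yields $\theta_X(\on{pr}_C^*(a)\cup\on{pr}_S^*(b))=\on{pr}_S^*(\beta)\cup\cl{\on{pr}_C^*(a)}$. Computing this product through the Künneth decomposition — one factor in $H^0(\cl C)\otimes H^1(\cl S,\Z_\ell(1))$, the other equal to $\cl a\otimes 1$ in $H^2(\cl C,\Z_\ell(1))\otimes H^0(\cl S)$, with no sign since $1\cdot 2$ is even — one obtains $\cl a\otimes\beta$, i.e.\ the image of $\beta$ under the inclusion of the third summand $H^1(\F,H^1(\cl S,\Z_\ell(1)))\hookrightarrow H^1(\F,H^3(\cl X,\Z_\ell(2)))$ of (\ref{kunneth3}). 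Therefore $\iota_X$ carries that summand into the image of (\ref{tate-int3}), which is the assertion.

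I expect the main obstacle to be the claim of the second step, namely that the Hochschild–Serre class $\iota_S(\beta)$ is algebraic on the surface $S$: this is exactly what makes the $H^1(\cl S,\Z_\ell(1))$-summand tractable (in contrast with the mixed summand $H^1(\F,H^2(\cl S,\Z_\ell(1))\otimes H^1(\cl C,\Z_\ell(1)))$, which is dealt with in the remaining steps of the proof of Theorem \ref{iota-dans-image}), and it rests on the torsion-freeness of $T_\ell\on{Br}(S)$ together with the finiteness of $H^1(\F,H^1(\cl S,\Z_\ell(1)))$. The remaining verifications — functoriality of $\theta$ under pullback, the sign in Lemma \ref{hochschild-cup}, and the Künneth signs — are routine given the analysis of $\theta_X$ carried out in Section \ref{sec3}.
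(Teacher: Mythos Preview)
Your argument is correct and is essentially the same as the paper's: both show that $\iota_S(\beta)\in H^2(S,\Z_\ell(1))$ is algebraic by combining the Kummer exact sequence (so the cokernel of $\on{cl}_S$ sits in the torsion-free $T_\ell\on{Br}(S)$) with the finiteness of $H^1(\F,H^1(\cl S,\Z_\ell(1)))$ from the Weil conjectures, and then cup with the class of a degree-$1$ zero-cycle on $C$, using Lemma~\ref{hochschild-cup} to identify the resulting class under $\theta_X$. The only cosmetic difference is that the paper packages the last step as a single commutative diagram, whereas you trace an individual element $\beta$ through the same maps.
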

	
	\begin{proof}
		La suite de Kummer induit, par passage \`a la limite projective, une suite exacte
		\begin{equation}\label{kummer-s}\on{Pic}(S)\otimes_{\Z}\Z_{\ell}\longrightarrow H^2(S,\Z_{\ell}(1))\longrightarrow T_{\ell}(\on{Br}(S)).
		\end{equation}
		Montrons que l'application compos\'ee
		\[
		\begin{tikzcd}
		H^1\left(\F,H^1(\cl{S},\Z_{\ell}(1))\right)\ar[r,"\iota_S"] & H^2(S,\Z_{\ell}(1))\ar[r] & T_{\ell}(\on{Br}(S))
\end{tikzcd}		
		\]
		est nulle.	Un argument de poids, utilisant les conjectures de Weil d\'emontr\'ees par Deligne \cite{weil1}, montre que $H^1\left(\F,H^1(\cl{S},\Z_{\ell}(1))\right)$ est un groupe fini; voir \cite[p. 781]{colliot1983torsion}. 	Comme tout module de Tate, le groupe $T_{\ell}(\on{Br}(S))$ est sans torsion.	L'exactitude de la suite (\ref{kummer-s}) entra\^{i}ne donc que $H^1\left(\F, H^1(\cl{S},\Z_{\ell}(1))\right)$ est dans l'image de \[\on{Pic}(S)\otimes_{\Z}\Z_{\ell}\longrightarrow H^2(S,\Z_{\ell}(1)).\]
		
		D'apr\`es la borne de Hasse--Weil, il existe un $0$-cycle $\zeta\in \on{Pic}(C)$ de degr\'e $1$; voir par exemple \cite[1.5.3. Lemme 1]{soule1984groupes}. Soit $\cl{\on{cl}_C(\zeta)}$ l'image de $\on{cl}_C(\zeta)\in H^2(C,\mu_n)$ dans $H^2(\cl{C},\mu_n)$. On a un diagramme commutatif
		\[
		\begin{tikzcd}[row sep=large]
			\on{Pic}(S)\otimes_\Z \Z_{\ell} \arrow[r,"\on{cl}_S"] \arrow[d, "\on{pr}_S^*(-)\cup \on{pr}_C^*(\xi)"]  & H^2(S,\Z_{\ell}(1)) \arrow[d, "\on{pr}_{S}^*(-)\cup \on{pr}^*_{C}(\on{cl}_C(\zeta))"]   & H^1\left(\F, H^1(\cl{S},\Z_{\ell}(1))\right) \arrow[l,swap, "\iota_S"] \arrow[d, "\on{pr}_{\cl{S}}^*(-)\cup \on{pr}^*_{\cl{C}}(\cl{\on{cl}_C(\zeta)})"]  \\
			CH^2(X)\otimes_\Z \Z_{\ell} \arrow[r, "\on{cl}_X"]  & H^4(X,\Z_{\ell}(2))  & \arrow[l,swap,"\iota_X"] H^1\left(\F, H^3(\cl{X},\Z_{\ell}(2))\right).
		\end{tikzcd}
		\]
		Le carr\'e de gauche commute par \cite[Proposition VI.9.4]{milne1980etale} (dont la preuve vaut sur un corps de base quelconque) et le carr\'e de droite commute par les lemmes \ref{hochschild-cup} et \ref{limit-coho}(b). 	Ceci implique que $H^1\left(\F, H^1(\cl{S},\Z_{\ell}(1))\right)$ est dans l'image de l'application (\ref{tate-int3}).
	\end{proof}
	
	\begin{rmk}
	\leavevmode
	\begin{enumerate}[label=(\roman*)]
	\item Plus pr\'ecis\'ement, $H^1\left(\F, H^1(\cl{S},\Z_{\ell}(2))\right)$ est dans l'image du noyau de la fl\`eche compos\'ee \[\on{Pic}(S)_{\on{tors}} \otimes_{\Z} \Z_{\ell}  \longrightarrow  H^2(S,\Z_{\ell}(1)) \longrightarrow H^2(\cl{S},\Z_{\ell}(1)).\]
	\item Soit $X=Y\times_{\F} C$, o\`u $C$ est une courbe projective, lisse et g\'eom\'etriquement connexe, $Y$ est une vari\'et\'e  projective, lisse et g\'eom\'etriquement connexe de dimension $d$ telle que $H^1(\F,H^{2d-3}(\cl{Y},\Z_{\ell}(d-1)))$ est dans l'image de l'application (\ref{tate-int3}) pour $Y$. Alors l'argument de la preuve du lemme \ref{lemmeB} montre que le facteur de K\"unneth $H^1\left(\F,H^{2d-3}(\cl{Y},\Z_{\ell}(d-1))\right)\subset H^1\left(\F,H^{2d-1}(\cl{X},\Z_{\ell}(d))\right)$ est dans l'image de l'application (\ref{tate-int3}) pour~$X$.
	\end{enumerate}
	\end{rmk}

	\begin{lemma}\label{lemmeC}
		Soient $C$ et $S$ deux vari\'et\'es g\'eom\'etriquement connexes, projectives et lisses sur $\F$, de dimension $1$ et $2$ respectivement et $X:=C\times_{\F} S$.
		Supposons que l'on ait $b_2(\cl{S})=\rho(\cl{S})$ et que $G$ agisse trivialement sur $\on{NS}(\cl{S})$. Alors  l'image de l'application \emph{(\ref{tate-int3})} contient le facteur direct	$H^1\left(\F,H^2(\cl{S},\Z_{\ell}(1))\otimes_{\Z_{\ell}} H^1(\cl{C},\Z_{\ell}(1))\right)$ de $H^1\left(\F, H^3(\cl{X},\Z_{\ell}(2))\right)$.
	\end{lemma}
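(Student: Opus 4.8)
The plan is to produce a single homomorphism
\[
\Phi\colon\ \operatorname{Ker}\!\bigl(\operatorname{Pic}(C)\xrightarrow{\deg}\Z\bigr)\otimes_{\Z}\operatorname{Pic}(S)\otimes_{\Z}\Z_{\ell}\ \longrightarrow\ H^1\!\bigl(\F,\ H^2(\cl S,\Z_{\ell}(1))\otimes_{\Z_{\ell}}H^1(\cl C,\Z_{\ell}(1))\bigr)
\]
which is at once surjective and compatible with cycle classes, in the sense that $\iota_X\circ\Phi=\pm\,\operatorname{cl}_X\circ\cup$, where $\cup\colon \operatorname{Ker}(\operatorname{Pic}(C)\xrightarrow{\deg}\Z)\otimes\operatorname{Pic}(S)\otimes\Z_\ell\to CH^2(X)\otimes\Z_\ell$ sends $D\otimes L$ to $\operatorname{pr}_C^*D\cdot\operatorname{pr}_S^*L$. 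Granting these two properties, every element of the mixed K\"unneth summand of $H^1(\F,H^3(\cl X,\Z_\ell(2)))$ becomes the cycle class of an explicit product cycle, which is the assertion. The homomorphism $\Phi$ is defined by sending $D\otimes L$ to the image, under the K\"unneth inclusion $H^2(\cl S,\Z_\ell(1))\otimes_{\Z_\ell}H^1(\cl C,\Z_\ell(1))\hookrightarrow H^3(\cl X,\Z_\ell(2))$ of (\ref{kunneth3}), of the cup product in Galois cohomology of $\F$ of the $G$-invariant class $\cl{\operatorname{cl}_S(L)}\in H^0(\F,H^2(\cl S,\Z_\ell(1)))$ with $\theta_C(\operatorname{cl}_C(D))\in H^1(\F,H^1(\cl C,\Z_\ell(1)))$; here $\operatorname{cl}_C(D)$ lies in $\operatorname{Ker}(H^2(C,\Z_\ell(1))\to H^2(\cl C,\Z_\ell(1)))$ because $D$ has degree $0$ (diagram (\ref{pic-modn})), so that $\theta_C$ applies. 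As in the proof of Lemma~\ref{hochschild-cores}(a), all of this is obtained by passing to the limit over $n=\ell^m$ from the analogous statements with $\mu_{\ell^n}$-coefficients.

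For surjectivity, the Kummer sequence over $\cl\F$ gives an exact sequence $0\to\operatorname{NS}(\cl S)\otimes_\Z\Z_\ell\to H^2(\cl S,\Z_\ell(1))\to T_\ell(\operatorname{Br}(\cl S))\to 0$, and $b_2(\cl S)=\rho(\cl S)$ forces the torsion-free $\Z_\ell$-module $T_\ell(\operatorname{Br}(\cl S))$ to have rank $0$, hence to vanish; thus $\operatorname{cl}_{\cl S}\colon \operatorname{NS}(\cl S)\otimes_\Z\Z_\ell\xrightarrow{\ \sim\ }H^2(\cl S,\Z_\ell(1))$, with trivial $G$-action by hypothesis. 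Pick $L_1,\dots,L_k\in\operatorname{Pic}(S)$ whose classes generate $\operatorname{NS}(\cl S)$; this is possible since $\operatorname{Pic}(S)\to\operatorname{Pic}(\cl S)^G$ is onto (as $\operatorname{Br}(\F)=0$) and $\operatorname{Pic}(\cl S)^G\to\operatorname{NS}(\cl S)^G=\operatorname{NS}(\cl S)$ is onto (as $H^1(\F,(\operatorname{Pic}^0_{S/\F})_{\operatorname{red}})=0$ by Lang). Since $H^1(\cl C,\Z_\ell(1))$ is $\Z_\ell$-free, tensoring the surjection $\bigoplus_{j}\Z_\ell\,e_j\twoheadrightarrow H^2(\cl S,\Z_\ell(1))$, $e_j\mapsto\cl{\operatorname{cl}_S(L_j)}$, with it gives a $G$-equivariant surjection $\bigoplus_{j=1}^kH^1(\cl C,\Z_\ell(1))\twoheadrightarrow H^2(\cl S,\Z_\ell(1))\otimes_{\Z_\ell}H^1(\cl C,\Z_\ell(1))$, $G$ acting trivially on the indices. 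Because $H^2(\F,-)$ vanishes on finitely generated $\Z_\ell$-modules (Lemma~\ref{limit-coho}(b) together with $\operatorname{cd}(\F)=1$), applying $H^1(\F,-)$ preserves this surjection; so it is enough that for each $j$ the map $\operatorname{Ker}(\operatorname{Pic}(C)\xrightarrow{\deg}\Z)\otimes_\Z\Z_\ell\to H^1(\F,H^1(\cl C,\Z_\ell(1)))$, $D\mapsto\theta_C(\operatorname{cl}_C(D))$, be onto. By Lemma~\ref{hochschild-courbe} and (\ref{identification-taut}), this is the $\ell$-adic limit of $\operatorname{Ker}(\operatorname{Pic}(C)\xrightarrow{\deg}\Z)/\ell^m\to J_C(\F)/\ell^m\xrightarrow{\ \sim\ }H^1(\F,J_C[\ell^m])$, where the first arrow is onto (again $\operatorname{Br}(\F)=0$; cf. (\ref{jac-deg0})) and the second is an isomorphism because $H^1(\F,J_C)=0$ by Lang.

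The compatibility is the technical heart. One must check, for $D\in\operatorname{Ker}(\operatorname{Pic}(C)\xrightarrow{\deg}\Z)$ and $L\in\operatorname{Pic}(S)$, that $\operatorname{cl}_X(\operatorname{pr}_C^*D\cdot\operatorname{pr}_S^*L)=\pm\,\iota_X(\Phi(D\otimes L))$. By multiplicativity of the \'etale cycle map (\cite[Proposition VI.9.4]{milne1980etale}, whose proof works over any base), $\operatorname{cl}_X(\operatorname{pr}_C^*D\cdot\operatorname{pr}_S^*L)=\operatorname{pr}_C^*\operatorname{cl}_C(D)\cup\operatorname{pr}_S^*\operatorname{cl}_S(L)$, and this already lies in $\operatorname{Ker}(H^4(X,\Z_\ell(2))\to H^4(\cl X,\Z_\ell(2)))$ since $\operatorname{pr}_C^*\operatorname{cl}_C(D)$ dies in $H^2(\cl X,\Z_\ell(1))$. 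Applying $\theta_X$ and Lemma~\ref{hochschild-cup} with $\alpha=\operatorname{pr}_S^*\operatorname{cl}_S(L)$ and $q=2$ (so the sign is $+1$) yields $\theta_X(\operatorname{pr}_C^*\operatorname{cl}_C(D)\cup\operatorname{pr}_S^*\operatorname{cl}_S(L))=\theta_X(\operatorname{pr}_C^*\operatorname{cl}_C(D))\cup\cl{\operatorname{pr}_S^*\operatorname{cl}_S(L)}$. By naturality of $\theta$ in the variety, $\theta_X(\operatorname{pr}_C^*\operatorname{cl}_C(D))=\operatorname{pr}_{\cl C}^*\theta_C(\operatorname{cl}_C(D))$, which lies in the K\"unneth summand $H^1(\cl C,\Z_\ell(1))\subset H^1(\cl X,\Z_\ell(1))$, whereas $\cl{\operatorname{pr}_S^*\operatorname{cl}_S(L)}=\operatorname{pr}_{\cl S}^*\cl{\operatorname{cl}_S(L)}$ lies in $H^2(\cl S,\Z_\ell(1))\subset H^2(\cl X,\Z_\ell(1))$. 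Since the cup product on a product of varieties respects the K\"unneth decomposition, the result lands in the mixed K\"unneth factor of $H^3(\cl X,\Z_\ell(2))$ and equals $\cl{\operatorname{cl}_S(L)}\cup\theta_C(\operatorname{cl}_C(D))=\Phi(D\otimes L)$ there, up to sign, which is the claimed identity.

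I expect the real difficulty to be exactly this last step: assembling Lemmas~\ref{cech-derived}, \ref{hochschild-cup}, \ref{hochschild-cores} and \ref{hochschild-courbe} coherently, both at finite level $\mu_{\ell^n}$ and after passage to the $\ell$-adic limit, and in particular the identification in Lemma~\ref{hochschild-courbe} of $\theta_C\circ\operatorname{cl}_C$ on degree-$0$ divisors with the Kummer coboundary into $H^1(\F,J_C(\cl\F)[\ell^n])$. Because only membership in the image of $\operatorname{cl}_X$ is at stake, the various signs contributed by these compatibilities are harmless, and once the finite-level bookkeeping is in place the argument closes.
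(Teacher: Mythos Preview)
Your proof is correct and follows essentially the same route as the paper. Both arguments assemble the homomorphism $\Phi$ from the cup product in Galois cohomology, establish the compatibility $\theta_X\circ\operatorname{cl}_X=\Phi$ via Lemmas~\ref{hochschild-cup} and~\ref{hochschild-courbe}, and use $b_2(\cl S)=\rho(\cl S)$ together with the trivial $G$-action on $\operatorname{NS}(\cl S)$ and Lang's theorem for the surjectivity; the paper packages all of this into a single large commutative diagram (\ref{gros}) at level $\mu_{\ell^n}$ and its limit (\ref{gros'}), whereas you define $\Phi$ directly and verify its two properties separately, and for the final surjectivity step the paper decomposes $\operatorname{NS}(\cl S)$ as free plus torsion and checks that $H^1(\F,H^1(\cl C,\Z_\ell(1)))/\ell^m\to H^1(\F,H^1(\cl C,\mu_{\ell^m}))$ is an isomorphism, while you pick generators $L_1,\dots,L_k$ and use the vanishing of $H^2(\F,-)$ on the kernel.
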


	\begin{proof}
		Soit $n$ un entier inversible dans $\F$. On pose 
		\begin{align*}
			\on{Pic}_0(C)&:=\on{Ker}\left(\on{Pic}(C)\xrightarrow{\deg}\Z\right), \\
			(CH^2(X)/n)_0&:=\on{Ker}\left(CH^2(X)/n\longrightarrow CH^2(\cl{X})/n\right), \\
			H^2_0(C,\mu_n)&:=\on{Ker}\left(H^2(C,\mu_n)\longrightarrow H^2(\cl{C},\mu_n)\right), \quad\text{et} \\
			H^4_0\left(X,\mu_n^{\otimes 2}\right)&:=\on{Ker}\left(H^2\left(X,\mu_n^{\otimes 2}\right)\longrightarrow H^2\left(\cl{X},\mu_n^{\otimes 2}\right)\right).
		\end{align*}
		Consid\'erons le diagramme commutatif
		\begin{equation}\label{gros}
			\adjustbox{scale=0.9,center}{ 
				\begin{tikzcd}
					\on{Pic}(S)\otimes_{\Z} J_C(\F)/n \arrow[d, "\wr"]  & \arrow[l, swap, "\sim"] \on{Pic}(S)\otimes_{\Z}\on{Pic}_0(C)/n \arrow[r, "\cup"] \arrow[d, "\on{cl}_S\otimes \on{cl}_C"] & (CH^2(X)/n)_0 \arrow[d, "\on{cl}_X"] \\
					\on{Pic}(S)\otimes_{\Z}H^1\left(\F, J_C(\cl{\F})[n]\right) &  H^2(S,\mu_n)\otimes_{\Z} H^2_0(C,\mu_n)\arrow[r,"\cup"] \arrow[d, "\cl{(-)}\otimes\theta_C"] & H^4_0\left(X,\mu_n^{\otimes 2}\right) \arrow[dd, "\theta_X"] \\
					\on{Pic}(S)\otimes_{\Z}H^1\left(\F, H^1\left(\F,H^1(\cl{C},\mu_n)\right)\right) \arrow[u, swap, "\wr"] \arrow[r] \arrow[d, "\cup"]   & H^2(\cl{S},\mu_n)^G\otimes_{\Z}H^1\left(\F, H^1(\cl{C},\mu_n)\right) \arrow[d, "\cup"]    \\
					H^1\left(\F, \on{Pic}(\cl{S})\otimes_{\Z}H^1\left(\F, H^1(\cl{C},\mu_n)\right)\right) \arrow[r] &H^1\left(\F, H^2(\cl{S},\mu_n)\otimes_{\Z} H^1(\cl{C},\mu_n)\right)\arrow[r] & H^1\left(\F, H^3\left(\cl{X},\mu_n^{\otimes 2}\right)\right).
				\end{tikzcd}
			}
		\end{equation}
		La fl\`eche horizontale sup\'erieure gauche est (\ref{jac-deg0}). Comme $\on{Br}(\F)=0$, les fl\`eches naturelles $\on{Pic}(C)\to \on{Pic}(\cl{C})^G$ et $\on{Pic}(S)\to \on{Pic}(\cl{S})^G$ sont bijectives. En particulier l'homomorphisme (\ref{jac-deg0}) est un isomorphisme pour $C$. Le rectangle de gauche est induit par (\ref{kummer-hochschild}) et commute d'apr\`es le lemme \ref{hochschild-courbe}, le carr\'e commutatif sup\'erieur droit vient de la compatibilit\'e de l'application cycle et du cup-produit. La commutativit\'e du carr\'e inf\'erieur gauche suit de la fonctorialit\'e du cup-produit en cohomologie galoisienne, et celle du carr\'e inf\'erieur droit se d\'eduit de la commutativit\'e, pour tout $\alpha\in H^2(S,\mu_n)$, du carr\'e
		\[
		\begin{tikzcd}[column sep=large, row sep=large]
			H^1\left(\F, H^1(\cl{C},\mu_n)\right) \arrow[r, "\iota_C"] \arrow[d, "H^1(\cl{\alpha}\cup\on{pr}^*_{\cl{C}}(-))"]  & H^2(C,\mu_n) \arrow[d, "\alpha\cup\on{pr}_{C}^*(-)"] \\
			H^1\left(\F, H^3(\cl{X},\mu_n)\right) \arrow[r, "\iota_C"] & H^4(X,\mu_n),   
		\end{tikzcd}
		\]
		ce qui suit du lemme \ref{hochschild-cup}.	
		
		On veut passer \`a la limite sur $n=\ell^m$, $m\geq 0$, dans le diagramme (\ref{gros}). Comme $\on{Pic}^0_{S/\F}(\cl{\F})$ est divisible, en tensorisant la suite exacte courte
		\begin{equation}\label{pic0-pic-h2}
		\begin{tikzcd}
			0\ar[r] & \on{Pic}^0_{S/\F}(\cl{\F})\ar[r] & \on{Pic}(\cl{S})\ar[r] & \on{NS}(\cl{S})\ar[r] & 0
		\end{tikzcd}
		\end{equation} 	
		par le groupe fini $H^1(\cl{C},\mu_{{\ell}^m})$, on obtient un isomorphisme
		\[\on{Pic}(\cl{S})\otimes_{\Z}H^1(\cl{C},\mu_{{\ell}^m})\simeq \on{NS}(\cl{S})\otimes_{\Z} H^1(\cl{C},\mu_{{\ell}^m}).\]
		Comme $\on{NS}(\cl{S})$ est un $\Z$-module de type fini, le lemme \ref{tensor-limit}(b) entra\^{\i}ne
		\[\varprojlim_m \left(\on{Pic}(\cl{S})\otimes_{\Z} H^1(\cl{C},\mu_{\ell^m})\right)\simeq \on{NS}(\cl{S})\otimes_{\Z}H^1(\cl{C},\Z_{\ell}).\]
		Les groupes $J_C(\F)$ et $J_C(\cl{\F})[n]$ sont finis, $\on{Pic}(S)=H^0(\F, \on{Pic}(\cl{S}))$ est un $\Z$-module de type fini, les groupes de cohomologie \'etale des vari\'et\'es $\cl{C},\cl{S}$ et $\cl{X}$ \`a valeurs dans $\mu_n^{\otimes j}$ sont finis, et ceux \`a valeurs dans $\Z_{\ell}$ sont des $\Z_{\ell}$-modules de type fini. Par passage \`a la limite dans le diagramme commutatif (\ref{gros}) sur $n=\ell^m$, $m\geq 0$, en utilisant les lemmes \ref{tensor-limit}, \ref{lim-tensor-lim} et \ref{limit-coho}(b), on obtient alors le diagramme commutatif suivant:
		\begin{equation}\label{gros'}
			\adjustbox{scale=0.9,center}{ 
				\begin{tikzcd}
					\on{Pic}(S)\otimes_{\Z} J_C(\F)\otimes_{\Z}\Z_{\ell} \arrow[d, "\wr"]  & \arrow[l, swap, "\sim"] \on{Pic}(S)\otimes_{\Z}\on{Pic}_0(C)\otimes_{\Z}\Z_{\ell} \arrow[r, "\cup"] \arrow[d, "\on{cl}_S\otimes \on{cl}_C"] & \varprojlim(CH^2(X)/\ell^m)_0 \arrow[d, "\on{cl}_X"] \\
					\on{Pic}(S)\otimes_{\Z}H^1\left(\F, T_{\ell}(J_C)\right) &  H^2(S,\Z_{\ell}(1))\otimes_{\Z_{\ell}} H^2_0(C,\Z_{\ell}(1))\arrow[r,"\cup"] \arrow[d, "\cl{(-)}\otimes\theta_C"] & H^4_0(X,\Z_{\ell}(2)) \arrow[dd, "\theta_X"] \\
					\on{Pic}(S)\otimes_{\Z}H^1\left(\F, H^1(\cl{C},\Z_{\ell}(1))\right) \arrow[u, swap, "\wr"] \arrow[r] \arrow[d, "\cup"]   & H^2(\cl{S},\Z_{\ell}(1))^G\otimes_{\Z_{\ell}}H^1\left(\F, H^1(\cl{C},\Z_{\ell}(1))\right) \arrow[d, "\cup"]    \\
					H^1\left(\F, \on{NS}(\cl{S})\otimes_{\Z}H^1(\cl{C},\Z_{\ell}(1))\right) \arrow[r] &H^1\left(\F, H^2(\cl{S},\Z_\ell(1))\otimes_{\Z_\ell} H^1(\cl{C},\Z_{\ell}(1))\right)\arrow[r] & H^1\left(\F, H^3(\cl{X},\Z_{\ell}(2))\right).
				\end{tikzcd}
			}
		\end{equation}
		Ici
\begin{align*}
H^2_0(C,\Z_{\ell}(1))&:=\on{Ker}\left(H^2(C,\Z_{\ell}(1))\longrightarrow H^2(\cl{C},\Z_{\ell}(1))\right),\\
H^4_0(X,\Z_{\ell}(2))&:=\on{Ker}\left(H^4(X,\Z_{\ell}(2))\to H^4(\cl{X},\Z_{\ell}(2))\right).
\end{align*}		
		Pour conclure, il suffit alors de d\'emontrer que la compos\'ee
\begin{equation}\label{e.suffit-surj}
\begin{tikzcd}
\on{Pic}(S)\otimes_{\Z}   H^1\left(\F, H^1(\cl{C},\Z_{\ell}(1))\right)\ar[r] & H^1\left(\F, \on{NS}(\cl{S})\otimes_{\Z}H^1(\cl{C},\Z_{\ell}(1))\right) \arrow[d, phantom, ""{coordinate, name=Z}] \arrow[d,
rounded corners,
to path={ -- ([xshift=2ex]\tikztostart.east)
|- (Z) [near start]\tikztonodes
-| ([xshift=-2ex]\tikztotarget.west)
-- (\tikztotarget)}] & \\
&H^1\left(\F, H^2(\cl{S},\Z_{\ell}(1))\otimes_{\Z_\ell} H^1(\cl{C},\Z_{\ell}(1))\right)&
\end{tikzcd}
\end{equation}
apparaissant dans le coin inf\'erieur gauche du diagramme (\ref{gros'}) est surjective. 
		
		Par \cite[Corollary V.3.28(d)]{milne1980etale}, le conoyau de la fl\`eche injective $\on{NS}(\cl{S})\otimes_{\Z}\Z_{\ell}\to H^2(\cl{S},\Z_{\ell}(1))$ est sans torsion. Comme $\rho(\cl{S})=b_2(\cl{S})$, cette injection est un isomorphisme, ce qui implique que la deuxi\`eme fl\`eche dans (\ref{e.suffit-surj}) est un isomorphisme.
		
		Comme $\on{Pic}^0_{S/\F}(\cl{\F})$ est de torsion et $\Q_{\ell}/\Z_{\ell}\otimes_{\Z}\Z_{\ell}\simeq \Q_{\ell}/\Z_{\ell}$, on a un isomorphisme $G$-\'equivariant $\on{Pic}^0_{S/\F}(\cl{\F})\otimes_{\Z}{\Z_{\ell}}\simeq \on{Pic}^0_{S/\F}(\cl{\F})\{\ell\}$ et donc le th\'eor\`eme de Lang \cite[Theorem 2]{lang1956algebraic} appliqu\'e \`a $(\on{Pic}^0_{S/\F})_{\on{red}}$ donne $H^1(\F, \on{Pic}^0_{S/\F}(\cl{\F})\otimes_{\Z}{\Z_{\ell}})=0$.
		On tensorise la suite (\ref{pic0-pic-h2}) par $\Z_{\ell}$ et on passe \`a la cohomologie galoisienne. On en d\'eduit que la fl\`eche naturelle $\on{Pic}(S)\otimes_{\Z} \Z_{\ell}\to \on{NS}(\cl{S})^G\otimes_{\Z}\Z_{\ell}$ est surjective.	Pour montrer la surjectivit\'e de l'homomorphisme (\ref{e.suffit-surj}) et conclure, il suffit alors d'\'etablir la surjectivit\'e de la fl\`eche 
		\[\on{NS}(\cl{S})^G\otimes_{\Z}   H^1\left(\F, H^1(\cl{C},\Z_{\ell}(1))\right)\longrightarrow H^1\left(\F, \on{NS}(\cl{S})\otimes_{\Z}H^1(\cl{C},\Z_{\ell}(1))\right)\]
		donn\'ee par le cup-produit en cohomologie galoisienne.
		
		Par hypoth\`ese  le groupe $G$ agit trivialement sur $\on{NS}(\cl{S})$. En \'ecrivant $\on{NS}(\cl{S})$ comme la somme directe de son sous-groupe de torsion et d'un sous-module libre, on se r\'eduit \`a v\'erifier que pour tout $m\geq 1$, la fl\`eche naturelle
		\[\phi_m:H^1\left(\F, H^1(\cl{C},\Z_{\ell}(1))\right)/\ell^m\longrightarrow H^1\left(\F, H^1(\cl{C},\mu_{\ell^m})\right)\]
		est un isomorphisme. Comme $H^2(\cl{C},\Z_{\ell}(1))\simeq \Z_{\ell}$ est sans torsion, par \cite[Lemma V.1.11]{milne1980etale} la fl\`eche naturelle \[H^1(\cl{C},\Z_{\ell}(1))/\ell^m\longrightarrow H^1(\cl{C},\mu_{\ell^m})\]
		est un isomorphisme. On a alors une  suite exacte courte
		\[
		\begin{tikzcd}
		0\ar[r] & H^1(\cl{C},\Z_{\ell}(1)) \ar[r,"\times\, \ell^m"]& H^1(\cl{C},\Z_{\ell}(1)) \ar[r]& H^1(\cl{C},\mu_{\ell^m})\ar[r]& 0.
		\end{tikzcd}
		\]
		Par passage  \`a la cohomologie galoisienne, on obtient une suite exacte
		\[
		\begin{tikzcd}
		0\ar[r]& H^1\left(\F, H^1(\cl{C},\Z_{\ell}(1))\right)/\ell^m\ar[r,"\phi_m"]& H^1\left(\F, H^1(\cl{C},\mu_{\ell^m})\right)\ar[r]& H^2\left(\F,H^1(\cl{C},\Z_{\ell}(1))\right).
		\end{tikzcd}
		\]
		La conclusion suit du fait que, d'apr\`es le lemme \ref{limit-coho}, on a
		\[H^2\left(\F,H^1(\cl{C},\Z_{\ell}(1))\right)\simeq \varprojlim_m H^2\left(\F, H^1(\cl{C},\mu_{{\ell}^m})\right)=0.\qedhere\]	
	\end{proof}

	\begin{lemma}\label{h1-cor}
		Soient $E/\F$ une extension finie et $M$ un $G$-module continu de type fini sur $\Z_{\ell}$.  La fl\`eche de corestriction $\nu:H^1(E,M) \longrightarrow H^1(\F,M)$ est surjective.
	\end{lemma}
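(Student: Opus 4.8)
The plan is to realize the corestriction map as the map on $H^1$ induced by the counit of the induction--restriction adjunction, and then to reduce its surjectivity to the vanishing of $H^2$ with finitely generated $\Z_\ell$-coefficients over a finite field, which follows from Lemma \ref{limit-coho}.

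First I would fix notation: since $E/\F$ is finite and finite fields are perfect, $H := \on{Gal}(\cl{\F}/E)$ is an open subgroup of $G = \on{Gal}(\cl{\F}/\F)$ of index $d := [E:\F]$. Consider the induced $G$-module $N := \Z[G]\otimes_{\Z[H]} M$ (equivalently the coinduced one, since $[G:H]<\infty$). As a $\Z_\ell$-module it is (non-canonically) isomorphic to $M^{\oplus d}$, hence finitely generated over $\Z_\ell$, and it carries a continuous $G$-action. The counit of the adjunction $(\on{Ind}_H^G,\on{Res}_H^G)$ is the $G$-equivariant surjection $\epsilon\colon N\to M$, $g\otimes m\mapsto gm$; put $K := \on{ker}(\epsilon)$, a $G$-submodule of $N$, hence again finitely generated over $\Z_\ell$ with continuous $G$-action. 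We obtain a short exact sequence of continuous $G$-modules
\[
0\longrightarrow K\longrightarrow N\xrightarrow{\ \epsilon\ } M\longrightarrow 0.
\]

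Next I would invoke Shapiro's lemma for continuous cohomology, which gives an isomorphism $H^1(G,N)\simeq H^1(H,\on{Res}_H^G M)=H^1(E,M)$ identifying $\epsilon_*\colon H^1(G,N)\to H^1(G,M)$ with the corestriction $\nu\colon H^1(E,M)\to H^1(\F,M)$ (this is one of the standard descriptions of $\nu$). The long exact cohomology sequence attached to the displayed short exact sequence then contains the exact piece
\[
H^1(E,M)\xrightarrow{\ \nu\ } H^1(\F,M)\longrightarrow H^2(\F,K),
\]
so it is enough to prove $H^2(\F,K)=0$. For this I would write $K=\varprojlim_n K/\ell^n K$ with each $K/\ell^n K$ finite; Lemma \ref{limit-coho}(a) gives $H^2(G,K/\ell^n K)=0$ because $\on{cd}(\F)=1$, and Lemma \ref{limit-coho}(b) then yields $H^2(\F,K)=H^2(G,K)\simeq\varprojlim_n H^2(G,K/\ell^n K)=0$, which completes the argument.

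The only nonformal ingredient here is the identification of $\epsilon_*$ with the corestriction map under the Shapiro isomorphism, so that is the point to state carefully; everything else is bookkeeping. One could also bypass it by the explicit computation with $G=\hat{\Z}$: choosing a topological generator $\sigma$ of $G$, the element $\sigma^d$ topologically generates $H$, and Lemma \ref{limit-coho} identifies $H^1(\F,M)=M/(1-\sigma)M$ and $H^1(E,M)=M/(1-\sigma^d)M$; a direct computation on cocycles shows that $\nu$ is the projection induced by $\on{id}_M$, which is well defined because $1-\sigma^d=(1+\sigma+\cdots+\sigma^{d-1})(1-\sigma)$, and this projection is visibly surjective.
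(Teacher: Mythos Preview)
Your main argument is essentially the paper's own proof: the paper writes the induced module as $\Z[G/H]\otimes_{\Z}M$ with the norm map in place of your $\Z[G]\otimes_{\Z[H]}M$ with the counit, sets $M_0$ for your $K$, and concludes from $H^2(\F,M_0)=0$ via Lemma~\ref{limit-coho} exactly as you do. Your closing explicit computation with $\sigma$ and $\sigma^d$ is a correct alternative that the paper does not include.
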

	
	\begin{proof}
		Soit $H:=\on{Gal}(\cl{\F}/E)$, et soit $M_0$ le noyau de la fl\`eche $\Z[G/H]\otimes_{\Z} M\to M$ induite par la norme $\Z[G/H]\to \Z$. La fl\`eche $\nu$ s'identifie \`a l'homomorphisme induit $H^1\left(\F,\Z[G/H]\otimes_{\Z} M\right)\to H^1(\F,M)$. Comme $M_0\simeq \varprojlim_n M_0/\ell^n$ et $M_0/\ell^n$ est fini pour tout $n\geq 1$, le lemme \ref{limit-coho}(a) donne $H^2(\F,M_0)=0$. La suite exacte longue en cohomologie galoisienne associ\'ee \`a \[0\longrightarrow M_0 \longrightarrow \Z[G/H]\otimes_{\Z} M\longrightarrow M\longrightarrow 0\] nous permet de conclure. 
	\end{proof}

	\begin{proof}[D\'emonstration du th\'eor\`eme \ref{iota-dans-image}]
		Soient $E/\F$ une extension finie et $f:X_E\to X$ la projection. Par les lemmes \ref{compat} et \ref{hochschild-cores}(a), on a le diagramme commutatif
		\[
		\begin{tikzcd}[column sep=large]
			CH^2(X_E) \arrow[r, "\on{cl}_{X_E}"] \arrow[d,"f_*"] &  H^4(X_E,\Z_{\ell}(2)) \arrow[d,"\on{tr}_f"] & H^1\left(E,H^3(\cl{X},\Z_{\ell}(2))\right) \arrow[l, swap, "\iota_{X_E}"] \arrow[d, "H^1(\on{tr}_{\cl{f}})"]  \\
			CH^2(X) \arrow[r, "\on{cl}_X"] & H^4(X,\Z_{\ell}(2)) & H^1\left(\F,H^3(\cl{X},\Z_{\ell}(2))\right). \arrow[l, swap, "\iota_X"]   
		\end{tikzcd}
		\]
		Par les lemmes \ref{hochschild-cores}(b) et \ref{h1-cor}, la fl\`eche verticale \`a droite est surjective. Il suffit alors de montrer que l'image de $\on{cl}_{X_E}$ contient l'image de $\iota_{X_E}$. On choisit une extension $E/\F$ telle que $\on{Gal}(\cl{\F}/E)$ agit trivialement sur $\on{NS}(\cl{S})$. En rempla\c{c}ant $\F$ par $E$, on peut alors supposer que $G$ agit trivialement sur $\on{NS}(\cl{S})$. Vue la d\'ecomposition de K\"unneth (\ref{kunneth3}), la conclusion suit des lemmes \ref{lemmeA}, \ref{lemmeB} et \ref{lemmeC}.
	\end{proof}
	
	\begin{proof}[D\'emonstration du th\'eor\`eme \ref{mainthm}]
		D'apr\`es le th\'eor\`eme \ref{iota-dans-image}, l'image de l'application (\ref{tate-int3}) contient l'image de $\iota_X$. La conclusion suit de l'exactitude de la suite (\ref{hochschild}).
	\end{proof}

	\section{Preuve du th\'eor\`eme \ref{mainthm'}}
	
	\begin{lemma}\label{h1-pic0}
		Soit $V$ une vari\'et\'e projective, lisse et g\'eom\'etriquement connexe sur $\F$. Alors on a un isomorphisme $G$-\'equivariant $H^1(\cl{V},\Z_{\ell}(1))\simeq T_{\ell}((\on{Pic}^0_{\cl{V}/\cl{\F}})_{\on{red}})$.
	\end{lemma}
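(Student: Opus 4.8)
Le plan est de d\'evisser $H^1(\cl{V},\Z_{\ell}(1))$ \`a l'aide de la suite de Kummer, puis de comparer le groupe obtenu au module de Tate de la composante neutre du sch\'ema de Picard de $\cl{V}$. D'abord, pour chaque entier $n\geq 1$, la suite exacte de Kummer $1\longrightarrow \mu_{\ell^n}\longrightarrow \G_{\on{m}}\longrightarrow \G_{\on{m}}\longrightarrow 1$ sur $\cl{V}$ et le fait que $H^0(\cl{V},\G_{\on{m}})=\cl{\F}^{\times}$ est divisible (car $\cl{V}$ est propre et g\'eom\'etriquement connexe sur le corps alg\'ebriquement clos $\cl{\F}$) donnent un isomorphisme $G$-\'equivariant $H^1(\cl{V},\mu_{\ell^n})\xrightarrow{\sim}\on{Pic}(\cl{V})[\ell^n]$ induit par l'inclusion $\mu_{\ell^n}\subset \G_{\on{m}}$. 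Je v\'erifierais ensuite que, via ces isomorphismes, la fl\`eche de transition $H^1(\cl{V},\mu_{\ell^{n+1}})\to H^1(\cl{V},\mu_{\ell^n})$ induite par $\zeta\mapsto \zeta^{\ell}$ correspond \`a la multiplication par $\ell$ de $\on{Pic}(\cl{V})[\ell^{n+1}]$ vers $\on{Pic}(\cl{V})[\ell^n]$. Par passage \`a la limite projective sur $n=\ell^m$, j'obtiendrais un isomorphisme $G$-\'equivariant $H^1(\cl{V},\Z_{\ell}(1))\xrightarrow{\sim} T_{\ell}(\on{Pic}(\cl{V}))$.

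Il resterait \`a identifier $T_{\ell}(\on{Pic}(\cl{V}))$. La suite spectrale de Leray pour $\G_{\on{m}}$ le long de $\cl{V}\to \Spec\cl{\F}$ (dont les termes $H^p(\Spec\cl{\F},-)$ s'annulent pour $p>0$) donne $\on{Pic}(\cl{V})=H^1(\cl{V},\G_{\on{m}})=\on{Pic}_{\cl{V}/\cl{\F}}(\cl{\F})$, d'o\`u une suite exacte courte de $G$-modules
\[0\longrightarrow \on{Pic}^0_{\cl{V}/\cl{\F}}(\cl{\F})\longrightarrow \on{Pic}(\cl{V})\longrightarrow \on{NS}(\cl{V})\longrightarrow 0.\]
Comme $\on{NS}(\cl{V})$ est un groupe ab\'elien de type fini, on a $T_{\ell}(\on{NS}(\cl{V}))=0$, et l'exactitude \`a gauche du foncteur $T_{\ell}\simeq \on{Hom}(\Q_{\ell}/\Z_{\ell},-)$ fournit un isomorphisme $G$-\'equivariant $T_{\ell}(\on{Pic}^0_{\cl{V}/\cl{\F}}(\cl{\F}))\xrightarrow{\sim} T_{\ell}(\on{Pic}(\cl{V}))$.

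Enfin, $\Spec\cl{\F}$ \'etant r\'eduit, tout $\cl{\F}$-point de $\on{Pic}^0_{\cl{V}/\cl{\F}}$ se factorise par sa r\'eduction, donc $\on{Pic}^0_{\cl{V}/\cl{\F}}(\cl{\F})=(\on{Pic}^0_{\cl{V}/\cl{\F}})_{\on{red}}(\cl{\F})$; comme $\cl{V}$ est g\'eom\'etriquement normale ($V$ \'etant lisse) et $\cl{\F}$ est parfait, $(\on{Pic}^0_{\cl{V}/\cl{\F}})_{\on{red}}$ est une vari\'et\'e ab\'elienne et, par d\'efinition, $T_{\ell}((\on{Pic}^0_{\cl{V}/\cl{\F}})_{\on{red}})=T_{\ell}((\on{Pic}^0_{\cl{V}/\cl{\F}})_{\on{red}}(\cl{\F}))$. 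En composant tous les isomorphismes pr\'ec\'edents, j'obtiendrais l'isomorphisme $G$-\'equivariant cherch\'e. Aucun de ces pas n'est r\'eellement d\'elicat: le seul point demandant un peu d'attention est le calcul des fl\`eches de transition dans la limite projective (afin que $\varprojlim_m \on{Pic}(\cl{V})[\ell^m]$ soit bien $T_{\ell}(\on{Pic}(\cl{V}))$) et la v\'erification, par fonctorialit\'e, que tous les isomorphismes en jeu sont $G$-\'equivariants.
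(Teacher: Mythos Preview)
Your proof is correct and follows essentially the same route as the paper: Kummer sequence to get $H^1(\cl{V},\mu_{\ell^n})\simeq \on{Pic}(\cl{V})[\ell^n]$, passage to the projective limit, then the short exact sequence $0\to \on{Pic}^0\to \on{Pic}\to \on{NS}\to 0$ combined with left exactness of $T_\ell$ and $T_\ell(\on{NS}(\cl{V}))=0$. You are in fact slightly more careful than the paper about the transition maps and about the identification of $\cl{\F}$-points of $\on{Pic}^0$ with those of its reduction.
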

	
	\begin{proof}
		Pour tout $n\geq 1$, la suite de Kummer donne $H^1(\cl{V},\mu_{\ell^n})\simeq \on{Pic}(\cl{V})[\ell^n]$ qui est un isomorphisme de $G$-modules. Par passage \`a la limite projective sur $n\geq 1$, on obtient $H^1(\cl{V},\Z_{\ell}(1))\simeq T_{\ell}(\on{Pic}(\cl{V}))$.  On a une suite exacte courte
		\[0\longrightarrow \on{Pic}_{\cl{V}}^0(\cl{\F})\set{\ell}\longrightarrow \on{Pic}(\cl{V})\set{\ell}\longrightarrow M\longrightarrow 0,\] o\`u $M$ est un $\Z$-module de type fini. Comme $M$ est fini, $T_{\ell}(M)=0$. Pour tout $n\geq 1$, les foncteurs de $\ell^n$-torsion et de limite projective sont exacts \`a gauche, donc le foncteur $T_{\ell}(-)$ est exact \`a gauche. On conclut que l'on a \[H^1\left(\cl{V},\Z_{\ell}(1)\right)\simeq T_{\ell}\left(\on{Pic}(\cl{V})\right)\simeq T_{\ell}\left(\left(\on{Pic}^0_{\cl{V}/\cl{\F}}\right)_{\on{red}}\right).\qedhere\]
	\end{proof}
	
	\begin{lemma}\label{hyp-pic}
		Soient $C$ et $S$ deux vari\'et\'es g\'eom\'etriquement connexes, projectives et lisses sur $\F$, de dimension $1$ et $2$ respectivement et $X:=C\times_{\F} S$. On suppose que l'on a
		\[
		{\rm Hom}_{G}\left(\on{NS}(\cl{S})\{\ell\}, J_C(\cl{\F})\set{\ell}\right)=0\quad\text{et}\quad \on{Hom}_{\F-{\rm gr}}\left(\left(\on{Pic}^0_{S/\F}\right)_{\on{red}},J_C\right)=0.
		\]
		Alors
		\[\left(H^3\left(\cl{S},\Z_{\ell}(2)\right)\otimes_{\Z_{\ell}} H^1\left(\cl{C},\Z_{\ell}\right)\right)^G=0.\]
	\end{lemma}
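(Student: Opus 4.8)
Posons $A:=\left(\on{Pic}^0_{S/\F}\right)_{\on{red}}$, qui est une variété abélienne sur $\F$. L'idée est de traiter séparément la partie sans torsion et la partie de torsion de $H^3(\cl{S},\Z_{\ell}(2))$, et dans chaque cas de transformer le groupe $(H^3(\cl{S},\Z_{\ell}(2))\otimes_{\Z_{\ell}}H^1(\cl{C},\Z_{\ell}))^G$ en un groupe de $\on{Hom}$ galoisiens grâce au lemme \ref{h1-pic0}. Appliqué à $C$ (en observant que $(\on{Pic}^0_{\cl{C}/\cl{\F}})_{\on{red}}=(J_C)_{\cl{\F}}$) et à $S$ (en observant que $(\on{Pic}^0_{\cl{S}/\cl{\F}})_{\on{red}}=A_{\cl{\F}}$, car $\F$ est parfait), ce lemme fournit des isomorphismes $G$-équivariants $H^1(\cl{C},\Z_{\ell}(1))\simeq T_{\ell}(J_C)$ et $H^1(\cl{S},\Z_{\ell}(1))\simeq T_{\ell}(A)$, où $T_{\ell}$ et $V_{\ell}$ sont pris sur $\cl{\F}$. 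En particulier $H^1(\cl{C},\Z_{\ell})\simeq T_{\ell}(J_C)(-1)$ est un $\Z_{\ell}$-module libre de type fini, de sorte qu'en tensorisant la suite exacte $G$-équivariante $0\to H^3(\cl{S},\Z_{\ell}(2))_{\on{tors}}\to H^3(\cl{S},\Z_{\ell}(2))\to F\to 0$ (où $F$ désigne le quotient de $H^3(\cl{S},\Z_{\ell}(2))$ par sa torsion) par $H^1(\cl{C},\Z_{\ell})$ on obtient encore une suite exacte ; en lui appliquant le foncteur $(-)^G$, exact à gauche, on se ramène à démontrer que $(F\otimes_{\Z_{\ell}}H^1(\cl{C},\Z_{\ell}))^G=0$ et $(H^3(\cl{S},\Z_{\ell}(2))_{\on{tors}}\otimes_{\Z_{\ell}}H^1(\cl{C},\Z_{\ell}))^G=0$.

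Pour la partie libre, je commencerais par observer que $F$ se plonge de façon $G$-équivariante dans $H^3(\cl{S},\Q_{\ell}(2))$ avec $F\otimes_{\Z_{\ell}}\Q_{\ell}=H^3(\cl{S},\Q_{\ell}(2))$ ; comme $H^1(\cl{C},\Z_{\ell})$ est libre, $(F\otimes_{\Z_{\ell}}H^1(\cl{C},\Z_{\ell}))^G$ se plonge dans $\bigl(H^3(\cl{S},\Q_{\ell}(2))\otimes_{\Q_{\ell}}H^1(\cl{C},\Q_{\ell})\bigr)^G$, et il suffit de voir que ce dernier groupe est nul. Or la dualité de Poincaré sur $\cl{S}$ donne $H^3(\cl{S},\Q_{\ell}(2))\simeq\on{Hom}_{\Q_{\ell}}(H^1(\cl{S},\Q_{\ell}),\Q_{\ell})\simeq\on{Hom}_{\Q_{\ell}}(V_{\ell}(A),\Q_{\ell})(1)$, et $H^1(\cl{C},\Q_{\ell})\simeq V_{\ell}(J_C)(-1)$ ; les torsions à la Tate se compensent et
\[
H^3(\cl{S},\Q_{\ell}(2))\otimes_{\Q_{\ell}}H^1(\cl{C},\Q_{\ell})\simeq\on{Hom}_{\Q_{\ell}}(V_{\ell}(A),V_{\ell}(J_C)),
\]
dont les $G$-invariants sont $\on{Hom}_G(V_{\ell}(A),V_{\ell}(J_C))$. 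D'après le théorème de Tate sur les homomorphismes de variétés abéliennes sur un corps fini, ce groupe est isomorphe à $\on{Hom}_{\F-{\rm gr}}(A,J_C)\otimes_{\Z}\Q_{\ell}$, qui est nul d'après la seconde condition de l'hypothèse.

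Pour la partie de torsion, le point clé est l'identification $G$-équivariante $H^3(\cl{S},\Z_{\ell}(2))_{\on{tors}}\simeq\on{Hom}_{\Z_{\ell}}(\on{NS}(\cl{S})\{\ell\},\Q_{\ell}/\Z_{\ell}(1))$. Elle résulte de la dualité de Poincaré — l'accouplement entre sous-groupes de torsion identifie $H^3(\cl{S},\Z_{\ell}(2))_{\on{tors}}$ à $\on{Hom}_{\Z_{\ell}}(H^2(\cl{S},\Z_{\ell})_{\on{tors}},\Q_{\ell}/\Z_{\ell})$, avec action triviale sur $\Q_{\ell}/\Z_{\ell}$ — et de l'isomorphisme $H^2(\cl{S},\Z_{\ell}(1))_{\on{tors}}\simeq\on{NS}(\cl{S})\{\ell\}$ fourni par la suite de Kummer (cf. \cite[Corollary V.3.28]{milne1980etale}). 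Comme $\on{NS}(\cl{S})\{\ell\}$ est fini et $H^1(\cl{C},\Z_{\ell})\simeq T_{\ell}(J_C)(-1)$ est libre, on peut faire entrer $H^1(\cl{C},\Z_{\ell})$ à l'intérieur du $\on{Hom}$, et en utilisant $\Q_{\ell}/\Z_{\ell}(1)\otimes_{\Z_{\ell}}T_{\ell}(J_C)(-1)\simeq\Q_{\ell}/\Z_{\ell}\otimes_{\Z_{\ell}}T_{\ell}(J_C)\simeq J_C(\cl{\F})\{\ell\}$ on obtient
\[
H^3(\cl{S},\Z_{\ell}(2))_{\on{tors}}\otimes_{\Z_{\ell}}H^1(\cl{C},\Z_{\ell})\simeq\on{Hom}_{\Z_{\ell}}(\on{NS}(\cl{S})\{\ell\},J_C(\cl{\F})\{\ell\}).
\]
En passant aux $G$-invariants on trouve $\on{Hom}_G(\on{NS}(\cl{S})\{\ell\},J_C(\cl{\F})\{\ell\})$, nul d'après la première condition. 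En combinant les deux parties, on conclut.

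L'obstacle principal est la gestion des torsions à la Tate, et surtout la détermination précise de la structure de $G$-module de $H^3(\cl{S},\Z_{\ell}(2))_{\on{tors}}$ : il faut s'assurer que l'accouplement de Poincaré sur la torsion produit exactement $\on{Hom}_{\Z_{\ell}}(\on{NS}(\cl{S})\{\ell\},\Q_{\ell}/\Z_{\ell}(1))$, et non un autre twist à la Tate, ce qui est précisément ce qui fait que les deux conditions de l'hypothèse — formulées avec $\on{NS}(\cl{S})\{\ell\}$, $J_C(\cl{\F})\{\ell\}$ et $\on{Hom}_{\F-{\rm gr}}$ — sont les bonnes. À noter que cette preuve n'utilise pas l'hypothèse $b_2(\cl{S})=\rho(\cl{S})$.
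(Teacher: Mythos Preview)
Your proof is correct and follows essentially the same approach as the paper's: split $H^3(\cl{S},\Z_{\ell}(2))$ into its torsion and torsion-free parts, identify the torsion with $\on{Hom}(\on{NS}(\cl{S})\{\ell\},\Q_{\ell}/\Z_{\ell}(1))$ and the rational part with $\on{Hom}_{\Q_{\ell}}(V_{\ell}(A),\Q_{\ell})(1)$ via Poincar\'e duality, then invoke the two hypotheses together with Tate's theorem. The paper cites \cite[(8.12)]{grothendieck1968brauer3} (or \cite[Prop.~5.2.10]{ctskobrauer}) for the torsion identification rather than rederiving it from Poincar\'e duality and the Kummer sequence as you do, but this is the same statement; your observation that the hypothesis $b_2(\cl{S})=\rho(\cl{S})$ is not needed here is also correct.
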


	\begin{proof}
		D'apr\`es \cite[(8.12)]{grothendieck1968brauer3} ou \cite[Prop. 5.2.10]{ctskobrauer},
		on a un isomorphisme de $G$-modules
		\[H^3(\cl{S},\Z_{\ell}(2))_{\on{tors}}  \simeq \on{Hom}\left(\on{NS}(\cl{S})\set{\ell},\Q_{\ell}/\Z_{\ell}(1)\right).\]
		D'apr\`es le lemme \ref{h1-pic0}, on a	$H^1(\cl{C},\Z_{\ell}(1))\simeq T_{\ell}(J_C(\cl{\F}))$,
		donc
		\[H^3(\cl{S},\Z_{\ell}(2))_{\on{tors}}\otimes_{\Z_{\ell}} H^1(\cl{C},\Z_{\ell})\simeq \on{Hom}\left(\on{NS}(\cl{S})\set{\ell},J_C(\cl{\F})\set{\ell}\right).\] 
		On en d\'eduit
		\begin{equation}\label{jac-pic1}
			\left(H^3(\cl{S},\Z_{\ell}(2))_{\on{tors}}\otimes_{\Z_{\ell}} H^1(\cl{C},\Z_{\ell})\right)^G=0.
		\end{equation}
		
		Soit $A:=(\on{Pic}^0_{S/\F})_{\on{red}}$. Le th\'eor\`eme de Tate \cite[Main Theorem]{tate1966endomorphisms} donne
		\[\on{Hom}_{\F-\on{gr}}(A,J_C)\otimes_{\Z}\Z_{\ell}\simeq \on{Hom}_G\left(T_{\ell}(\cl{A}),T_{\ell}(J_{\cl{C}})\right).\]
		
		On a alors des isomorphismes
		\begin{align*}
			\on{Hom}_{\F-\on{gr}}(A,J_C)\otimes_{\Z}\Q_{\ell} &\simeq \on{Hom}_G\left(V_{\ell}(\cl{A}),V_{\ell}(J_{\cl{C}})\right) \\
			&\simeq \on{Hom}_G\left(H^1(\cl{S},\Q_{\ell}(1)),H^1(\cl{C},\Q_{\ell}(1))\right) \\
			&\simeq \left(H^1(\cl{S},\Q_{\ell}(1))^{\vee}\otimes_{\Q_{\ell}} H^1(\cl{C},\Q_{\ell}(1))\right)^G \\
			&\simeq \left(H^3(\cl{S},\Q_{\ell}(1))\otimes_{\Q_{\ell}} H^1(\cl{C},\Q_{\ell}(1))\right)^G\\
			&\simeq \left(H^3(\cl{S},\Q_{\ell}(2))\otimes_{\Q_{\ell}} H^1(\cl{C},\Q_{\ell})\right)^G
		\end{align*}
		Ici le premier isomorphisme suit du fait que les $\Z_{\ell}$-modules $T_{\ell}(\cl{A})$ et $T_{\ell}(J_{\cl{C}})$ sont de type fini, le deuxi\`eme isomorphisme vient du lemme \ref{h1-pic0}, et l'isomorphisme $G$-\'equivariant $H^1(\cl{S},\Q_{\ell}(1))^{\vee}\simeq H^3(\cl{S},\Q_{\ell}(1))$ est donn\'e par la dualit\'e de Poincar\'e $\ell$-adique pour la surface $S$; voir \cite[Th\'eor\`eme (2.3)]{weil1}. 
		Par hypoth\`ese, $\on{Hom}_{\F-\on{gr}}(A,J_C)=0$, donc 
		\[\left(H^3(\cl{S},\Q_{\ell}(2))\otimes_{\Q_{\ell}} H^1(\cl{C},\Q_{\ell})\right)^G=0.\] 
		Il s'en suit que
		\begin{equation}\label{jac-pic2}
			\left((H^3(\cl{S},\Z_{\ell}(2))/\on{tors})\otimes_{\Z_{\ell}} H^1(\cl{C},\Z_{\ell})\right)^G=0.	
		\end{equation} 
		On tensorise la suite exacte courte de $G$-modules
		\[0\longrightarrow H^3(\cl{S},\Z_{\ell}(2))_{\on{tors}}\longrightarrow H^3(\cl{S},\Z_{\ell}(2))\longrightarrow H^3(\cl{S},\Z_{\ell}(2))/\on{tors}\longrightarrow 0\]
		par le $\Z_{\ell}$-module libre $H^1(\cl{C},\Z_{\ell})$ et on passe aux $G$-invariants. La conclusion suit de la combinaison de (\ref{jac-pic1}) et (\ref{jac-pic2}).
	\end{proof}
	
	\begin{lemma}\label{tate-ginv}
		Soient $C$ et $S$ deux vari\'et\'es g\'eom\'etriquement connexes, projectives et lisses sur $\F$, de dimension $1$ et $2$ respectivement et $X:=C\times_{\F} S$. On suppose que l'on a $b_2(\cl{S})=\rho(\cl{S})$ et
		\[
		{\rm Hom}_{G}\left(\on{NS}(\cl{S})\{\ell\}, J_C(\cl{\F})\set{\ell}\right)=0\quad\text{et}\quad \on{Hom}_{\F-{\rm gr}}\left((\on{Pic}^0_{S/\F})_{\on{red}},J_C\right)=0.
		\]
		Alors l'application cycle
		\begin{equation*}\label{eq.tate-galois}\on{cl}_X:CH^2(X)\otimes\Z_{\ell}\longrightarrow H^4\left(\cl{X},\Z_{\ell}(2)\right)^G
		\end{equation*}
		est surjective.

	\end{lemma}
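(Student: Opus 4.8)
The plan is to split $H^{4}(\cl{X},\Z_{\ell}(2))^{G}$ along the K\"unneth decomposition (\ref{kunneth4}) and to realize each piece by algebraic classes. Taking $G$-invariants in (\ref{kunneth4}) gives a direct sum decomposition
\[
H^{4}(\cl{X},\Z_{\ell}(2))^{G}\simeq H^{4}(\cl{S},\Z_{\ell}(2))^{G}\oplus\bigl(H^{3}(\cl{S},\Z_{\ell}(1))\otimes_{\Z_{\ell}}H^{1}(\cl{C},\Z_{\ell}(1))\bigr)^{G}\oplus H^{2}(\cl{S},\Z_{\ell}(1))^{G},
\]
compatible with the cycle maps and with pullback along $\on{pr}_{S}$ and $\on{pr}_{C}$. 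It then suffices to prove that each of the three summands lies in the image of $\on{cl}_{X}$.

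For the middle summand, the Tate twist yields a $G$-equivariant isomorphism $H^{3}(\cl{S},\Z_{\ell}(1))\otimes_{\Z_{\ell}}H^{1}(\cl{C},\Z_{\ell}(1))\simeq H^{3}(\cl{S},\Z_{\ell}(2))\otimes_{\Z_{\ell}}H^{1}(\cl{C},\Z_{\ell})$, so Lemma \ref{hyp-pic}---this is exactly where the two $\on{Hom}$-vanishing hypotheses enter---shows that this group is zero and nothing needs to be checked. This is the genuinely delicate point of the argument: without the $\on{Hom}$-vanishing hypotheses this summand may be nonzero, and it is then not clear that it would be algebraic, so it is precisely here that any obstruction to the surjectivity in question would be located; the other two summands are essentially bookkeeping.

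For the first summand I would pull back from $S$ along $\on{pr}_{S}\colon X\to S$. By the theorem of Lang invoked in the proof of Lemma \ref{lemmeA}, the cycle map $\on{cl}_{S}\colon CH^{2}(S)\otimes_{\Z}\Z_{\ell}\to H^{4}(S,\Z_{\ell}(2))$ is surjective, and $H^{4}(S,\Z_{\ell}(2))\to H^{4}(\cl{S},\Z_{\ell}(2))^{G}$ is surjective by Lemma \ref{limit-coho}(b); since the cycle map commutes with $\on{pr}_{S}^{*}$, which is the K\"unneth inclusion of the first factor, the classes $\on{pr}_{S}^{*}\on{cl}_{S}(z)$ with $z\in CH^{2}(S)$ have restrictions to $\cl{X}$ surjecting onto $H^{4}(\cl{S},\Z_{\ell}(2))^{G}$.

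For the third summand, the hypothesis $b_{2}(\cl{S})=\rho(\cl{S})$ gives, as in the proof of Lemma \ref{lemmeC}, an isomorphism $\on{NS}(\cl{S})\otimes_{\Z}\Z_{\ell}\xrightarrow{\sim}H^{2}(\cl{S},\Z_{\ell}(1))$; since $\on{Br}(\F)=0$ one has $\on{Pic}(S)\xrightarrow{\sim}\on{Pic}(\cl{S})^{G}$, and Lang's theorem $H^{1}(\F,\on{Pic}^{0}_{S/\F}(\cl{\F}))=0$ makes $\on{Pic}(\cl{S})^{G}\to\on{NS}(\cl{S})^{G}$ surjective, so that $\on{cl}_{S}\colon\on{Pic}(S)\otimes_{\Z}\Z_{\ell}\to H^{2}(\cl{S},\Z_{\ell}(1))^{G}$ is surjective. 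Choosing, via the Hasse--Weil bound as in the proof of Lemma \ref{lemmeB}, a divisor class $\zeta\in\on{Pic}(C)$ of degree $1$, so that $\on{cl}_{\cl{C}}(\zeta)$ generates $H^{2}(\cl{C},\Z_{\ell}(1))\simeq\Z_{\ell}$, the product cycle $\on{pr}_{S}^{*}D\cdot\on{pr}_{C}^{*}\zeta\in CH^{2}(X)$ has cycle class equal, in $H^{4}(\cl{X},\Z_{\ell}(2))^{G}$, to the image of $\on{cl}_{\cl{S}}(D)$ under the K\"unneth inclusion of the third factor; letting $D$ run over $\on{Pic}(S)$ then covers $H^{2}(\cl{S},\Z_{\ell}(1))^{G}$. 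Assembling the three cases proves that $\on{cl}_{X}$ is surjective onto $H^{4}(\cl{X},\Z_{\ell}(2))^{G}$. The only minor subtleties to dispatch directly are that $G$-invariants commute with $-\otimes_{\Z}\Z_{\ell}$ for the finitely generated Galois module $\on{NS}(\cl{S})$ (the action factoring through a finite quotient by the Weil conjectures) and the compatibility of $\on{cl}_{X}(\on{pr}_{S}^{*}D\cdot\on{pr}_{C}^{*}\zeta)$ with the K\"unneth projection.
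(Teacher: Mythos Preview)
Your proposal is correct and follows essentially the same route as the paper: K\"unneth decomposition of $H^{4}(\cl{X},\Z_{\ell}(2))^{G}$, kill the middle summand via Lemma~\ref{hyp-pic}, and cover the two outer summands by pullbacks of $0$-cycles on $S$ (Lang) and of divisors on $S$ cupped with a degree-$1$ $0$-cycle on $C$ (using $b_{2}(\cl{S})=\rho(\cl{S})$). The only cosmetic differences are that the paper handles the first summand by noting directly that $H^{4}(\cl{S},\Z_{\ell}(2))^{G}\simeq\Z_{\ell}$ and invoking a degree-$1$ $0$-cycle on $S$ via Lang--Weil, and that it packages the surjectivity $\on{Pic}(S)\otimes_{\Z}\Z_{\ell}\twoheadrightarrow H^{2}(\cl{S},\Z_{\ell}(1))^{G}$ by a single citation of \cite[Corollary~V.3.28(d)]{milne1980etale} rather than unpacking it as you do.
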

	
	\begin{proof}
		La formule de K\"unneth en cohomologie $\ell$-adique nous donne un isomorphisme $G$-\'equivariant
		\[H^4(\cl{X},\Z_{\ell}(2))\simeq H^4(\cl{S},\Z_{\ell}(2))\oplus \left[H^3(\cl{S},\Z_{\ell}(2))\otimes_{\Z_{\ell}} H^1(\cl{C},\Z_{\ell})\right]\oplus H^2(\cl{S},\Z_{\ell}(1)).\]
		Par le lemme (\ref{hyp-pic}) on obtient
		\[H^4(\cl{X},\Z_{\ell}(2))^G\simeq H^4(\cl{S},\Z_{\ell}(2))^G\oplus  H^2(\cl{S},\Z_{\ell}(1))^G.\]
		On a le carr\'e commutatif	
		\[
		\begin{tikzcd}
			(CH^2(S)\otimes_{\Z}\Z_{\ell})\oplus (\on{Pic}(S)\otimes_{\Z}\Z_{\ell})\arrow[r] \arrow[d] & CH^2(X)\otimes_{\Z}\Z_{\ell} \arrow[d]\\
			H^4(\cl{S},\Z_{\ell}(2))^G\oplus  H^2(\cl{S},\Z_{\ell}(1))^G \arrow[r,"\sim"] & H^4(\cl{X},\Z_{\ell}(2))^G,
		\end{tikzcd}
		\]	
		o\`u les fl\`eches verticales sont les applications cycle et les fl\`eches horizontales sont induites par l'image r\'eciproque et le cup-produit. Comme $\F$ est fini, d'apr\`es les estimations de Lang--Weil, $S$ admet un $0$-cycle de degr\'e $1$ (voir \cite[1.5.3. Lemme 1]{soule1984groupes}) et donc la fl\`eche de degr\'e
		$CH^2(S)\otimes_{\Z}\Z_{\ell}\to H^4(\cl{S},\Z_{\ell}(2))^G=\Z_{\ell}$
		est surjective.	Comme $b_2(\cl{S})=\rho(\cl{S})$, par \cite[Corollary V.3.28(d)]{milne1980etale} la fl\`eche $\on{Pic}(S)\otimes_{\Z}\Z_{\ell}\to H^2(\cl{S},\Z_{\ell}(1))^G$ est aussi surjective. On conclut alors que la fl\`eche verticale de droite est surjective, comme voulu.
	\end{proof}

	\begin{proof}[D\'emonstration du th\'eor\`eme \ref{mainthm'}]
		(a)  L'application (\ref{tate-int1}) est surjective d'apr\`es le lemme \ref{tate-ginv}, donc le th\'eor\`eme \ref{mainthm} implique que l'application  (\ref{tate-int3}) est surjective.
		
		(b) La partie (a) et un th\'eor\`eme de Kahn \cite[Thm. 5.10]{ctscavia1} montrent que le groupe $H^3_{\on{nr}}(\F(X),\Q_{\ell}/\Z_{\ell}(2))$ est divisible. Par hypoth\`ese, il existe une vari\'et\'e projective et lisse $V$ de dimension $\leq 1$ et un morphisme $f:V\to S$ tels que pour tout corps alg\'ebriquement clos $\Omega$ contenant $\F$ l'homomorphisme d'image directe $f_*:CH_0(V_{\Omega})_{\Q}\to CH_0(S_{\Omega})_{\Q}$ est surjectif. Donc, pour tout $m\in \Z$, tous $P_1,P_2\in S(\Omega)$ tels que $mP_1$ est rationnellement \'equivalent \`a $mP_2$, et tout $P\in C(\Omega)$, les $0$-cycles $m\cdot (P,P_1)$, $m\cdot(P,P_2)\in X(\Omega)$ sont	rationnellement \'equivalents. On en d\'eduit que l'homomorphisme
\[(\on{id}_C\times f)_*:CH_0\left(C_{\Omega}\times V_{\Omega}\right)_{\Q}\longrightarrow CH_0\left(X_{\Omega}\right)_{\Q}\]
est surjectif, et donc que $CH_0(X)_{\Q}$ est support\'e en dimension $2$. Un argument de correspondances bien connu  \cite[Proposition~3.2]{colliot2013cycles} implique alors que le groupe $H^3_{\on{nr}}(\F(X),\Q_{\ell}/\Z_{\ell}(2))$ est annul\'e par
		un entier positif. Comme le groupe  $H^3_{\on{nr}}(\F(X),\Q_{\ell}/\Z_{\ell}(2))$ est divisible, il est nul. 
	\end{proof}


\BibliographyTOCName{R\'ef\'erences}

\end{document}